\def\blfootnote{\xdef\@thefnmark{}\@footnotetext}
\newtheorem{theorem}{Theorem}[section]
\newtheorem{lemma}[theorem]{Lemma}
\newtheorem{proposition}[theorem]{Proposition}
\newtheorem{corollary}[theorem]{Corollary}
\theoremstyle{definition}
\newtheorem{example}[theorem]{Example}
\newtheorem*{definition*}{Definition}
\numberwithin{equation}{section}
\begin{document}

\title{Rank type conditions on commutators in finite groups}

\author{Cristina Acciarri}
\address{C.~Acciarri: Dipartimento di Scienze Fisiche, Informatiche e Matematiche, Universit\`a degli Studi di Modena e Reggio Emilia, Via Campi 213/b, I-41125 Modena, Italy}
\email{cristina.acciarri@unimore.it}

\author{Robert M. Guralnick}
\address{Robert M. Guralnick: Department of Mathematics, University of
Southern California, Los Angeles, CA90089-2532, USA}
\email{guralnic@usc.edu}

\author{Evgeny Khukhro}
\address{E. I. Khukhro: Charlotte Scott Research Centre for Algebra, University of Lincoln, U.K.}
\email{khukhro@yahoo.co.uk}

\author{Pavel Shumyatsky}

\address{P. Shumyatsky: Department of Mathematics, University of Brasilia, DF~70910-900, Brazil}
\email{pavel@unb.br}

\thanks{The first author is member of ``National Group for Algebraic and Geometric Structures, and Their Applications'' (GNSAGA–INdAM). The second author was partially supported by NSF grant DMS 1901595 and Simons Foundation Fellowship 609771. The third author was partially supported by the International Center for Mathematics at SUSTech in Shenzhen. The fourth author was partially supported by FAPDF and CNPq.}
\keywords{Finite groups; automorphism; rank}
\subjclass[2020]{Primary 20D20; Secondary 20D45}

\begin{abstract}
 For a subset $S$ of a group $G$, let $I_G(S)$ denote the set of commutators $[g,s]=g^{-1}g^s$, where $g\in G$ and $s\in S$, so that $[G,S]$ is the subgroup generated by $I_G(S)$. We prove that if $G$ is a $p$-soluble finite group with a Sylow $p$-subgroup $P$ such that any subgroup generated by a subset of $I_G(P)$ is $r$-generated, then $[G,P]$ has $r$-bounded rank. We produce examples showing that such a result does not hold without the assumption of $p$-solubility. Instead, we prove that if a finite group $G$ has a Sylow $p$-subgroup $P$ such that (a) any subgroup generated by a subset of $I_G(P)$ is $r$-generated, and (b) for any $x\in I_G(P)$, any subgroup generated by a subset of $I_G(x)$ is $r$-generated, then $[G,P]$ has $r$-bounded rank. We also prove that if $G$ is a finite group such that for every prime $p$ dividing $|G|$, for any Sylow $p$-subgroup $P$, any subgroup generated by a subset of $I_G(P)$ can be generated by $r$ elements, then the derived subgroup $G'$
has $r$-bounded rank. As an important tool in the proofs, we prove the following result, which is also of independent interest: if a finite group $G$ admits a group of coprime automorphisms $A$ such that any subgroup generated by a subset of $I_G(A)$ is $r$-generated, then the rank of $[G,A]$ is $r$-bounded.
\end{abstract}

\maketitle

\section{Introduction}

By the rank of a finite group $G$ we mean the least positive integer $r$ such that every subgroup of $G$ can be generated by $r$ elements. (This parameter is also called the sectional rank, or the Pr\"ufer rank.) Bounds for the rank of finite groups or their subgroups impose strong restrictions on their structure. Results of this kind also have applications in wider classes of groups, such as profinite groups or locally finite groups. For example, conditions on the ranks are central in the theory of powerful $p$-groups developed by A.~Lubotzky and A.~Mann \cite{LM}, as well as in several important applications of this theory for profinite and residually finite groups. In representation theory of finite groups and its applications, the ranks are related to the dimensions of linear spaces arising as elementary abelian sections of the group. For example, Zassenhaus' theorem on soluble linear group implies a bound for the Fitting height of a finite soluble group of given rank. Therefore bounding the ranks is an important avenue of research in group theory.

In this paper we study the ranks of subgroups generated by commutators of elements of finite groups. For a subset $S$ of a group $G$, let $I_G(S)$ denote the set of commutators $[g,s]=g^{-1}g^s$, where $g\in G$ and $s\in S$, so that $[G,S]$ is the subgroup generated by $I_G(S)$.

One of our main results is the following `local--global' theorem about ranks in $p$-soluble finite groups. Henceforth we write ``$(a,b,c\dots)$-bounded'' to abbreviate ``bounded above by some function depending only on the parameters $a,b,c\dots$".

\begin{theorem}\label{second}
Let $p$ be a prime, $r$ a positive integer, $G$ a $p$-soluble finite group, and $P$ a Sylow $p$-subgroup of $G$. Suppose that any subgroup generated by a subset of $I_G(P)$ can be generated by $r$ elements. Then $[G,P]$ has $r$-bounded rank.
\end{theorem}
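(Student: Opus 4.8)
The plan is to argue by induction on $|G|$, reducing the global rank bound to a local analysis on the chief factors of $[G,P]$, with the coprime automorphism theorem quoted above as the engine. Two preliminary observations set this up. First, $[G,P]\trianglelefteq G$, and the hypothesis passes to every quotient $G/M$ ($M\trianglelefteq G$): a subset of $I_{G/M}(PM/M)$ lifts to a subset of $I_G(P)$, so the subgroup it generates is the image of an $r$-generated subgroup and is again $r$-generated. Second, the hypothesis already bounds ranks on elementary abelian pieces: if $W$ is an elementary abelian section and $S\subseteq I_G(P)$ has $\langle S\rangle\le W$, then $\langle S\rangle$ is $r$-generated and elementary abelian, hence of rank $\le r$; in particular, for any $G$-chief factor $N$, the subgroup $[N,P]=\langle[n,s]:n\in N,\ s\in P\rangle$ is generated by a subset of $I_G(P)\cap N$ and so has rank $\le r$.

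Next I would feed the two coprime actions available in a $p$-soluble group into the quoted theorem. On the one hand, $P$ acts coprimely on $V:=O_{p'}(G)$, and $I_V(P)\subseteq I_G(P)$, so the theorem gives that $\operatorname{rank}[V,P]$ is $r$-bounded. On the other hand, letting $H$ be a Hall $p'$-subgroup (which exists by $p$-solubility) and $Q:=O_p(G)$, the group $H$ acts coprimely on $Q$; here the useful point is a bridge between the two kinds of commutators. For $x\in Q\le P$ and $h\in H$ one has $[h,x]=h^{-1}h^x\in I_G(P)$ and $[h,x]=[x,h]^{-1}\in Q$, so every subset of $I_Q(H)$ generates, after inverting its elements, a subgroup generated by a subset of $I_G(P)$; it is therefore $r$-generated, and the quoted theorem yields that $\operatorname{rank}[Q,H]$ is $r$-bounded.

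With these in hand I would run the induction through a minimal normal subgroup $N$ of $G$, which by $p$-solubility is an elementary abelian $\ell$-group. Applying the theorem to $G/N$ bounds $\operatorname{rank}([G,P]N/N)=\operatorname{rank}([G,P]/([G,P]\cap N))$, so it suffices to bound $\operatorname{rank}([G,P]\cap N)$. Since $[G,P]\cap N$ is a $G$-submodule of the irreducible module $N$, it is $0$ or $N$; only the case $N\le[G,P]$ needs work. If $\ell\ne p$, then $P$ acts coprimely and $N=[N,P]\oplus C_N(P)$ by Maschke; the goal is the identity $[G,P]\cap N\le[N,P]$, which forces $N=[N,P]$ and hence $\operatorname{rank}(N)\le r$ by the first paragraph. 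If $\ell=p$, then $H$ acts coprimely, $N=[N,H]\oplus C_N(H)$ with $\operatorname{rank}[N,H]\le r$, and the analogous goal is to show $[G,P]$ meets $C_N(H)$ trivially, giving $N=[N,H]$ and $\operatorname{rank}(N)\le r$.

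The main obstacle is exactly these two ``fixed-point'' statements, namely that $[G,P]$ meets the coprime fixed parts only where forced: $[G,P]\cap O_{p'}(G)=[O_{p'}(G),P]$, together with the $p$-analogue controlling $[G,P]\cap C_Q(H)$. The difficulty is that an element of $[G,P]\cap C_N(P)$ is a product of commutators $[g,s]$ that individually need not lie in $N$, so one cannot read it off factor by factor. I would attack it by reducing, as above, to an abelian $\ell$-chief factor $N$ centralized by $P$ (the case $[N,P]\ne N$); there $[G,P]\le C_G(N)$, so a single commutator $[g,s]\in N$ satisfies $[g,s]^{|s|}=1$ by the coprimality of $|s|$ with $\ell$ and is therefore trivial, and the task is to upgrade this from single commutators to the whole $G$-submodule $[G,P]\cap N$ using its $G$-invariance together with a norm/transfer argument over $P$ (respectively the structure $C_G(O_p(G))\le O_p(G)$ and a Hall--Higman length argument on the $p$-side). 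It is here that $p$-solubility is indispensable, in line with the counterexamples announced in the abstract for the general case.
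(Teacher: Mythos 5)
Your preliminary reductions are fine, and your two coprime-action inputs to Theorem~\ref{main} are exactly right: they mirror the paper's Lemma~\ref{l-both}, including the inversion bridge (note $I_G(P)$ is literally closed under inverses, since $[x,s]^{-1}=[x^s,s^{-1}]$, so commutators $[g,h]=[h,g]^{-1}$ with $g\in O_p(G)\leq P$ do land in $\langle I_G(P)\rangle$-territory as you say). But the induction scheme wrapped around them has a fatal flaw. Inducting on $|G|$ through a minimal normal subgroup $N$, each step gives at best $\operatorname{rank}[G,P]\leq\operatorname{rank}\bigl([G,P]N/N\bigr)+\operatorname{rank}\bigl([G,P]\cap N\bigr)$; even when the second summand is $\leq r$, the bound grows by $r$ at every layer of a chief series whose length is not bounded in terms of $r$, so there is no uniform $f(r)$ for which the inductive step closes. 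The case $G=P$ already defeats the scheme: a $p$-group has a chief series of central factors of order $p$, on each of which your analysis yields nothing beyond ``rank $\leq r$'', and nothing caps the accumulation -- yet bounding $\operatorname{rank}[P,P]$ is the heart of the theorem, and your proposal never proves it. The paper's missing idea is to bound the \emph{number of induction steps} first: Lemma~\ref{l-2pcase} handles the $p$-group case directly (maximal normal abelian subgroup, Three Subgroups Lemma, Lemma~\ref{l-gmh}, and the Lubotzky--Mann Schur multiplier bound \cite[Theorem~4.2.3]{LM}), then Lemma~\ref{l-rpl} -- a Hall--Higman argument, the correct use of the tool you invoke only in passing -- converts the bounded rank of $[P,P]$ into an $r$-bounded $p$-length of $G$, and only then does induction run, on the $r$-bounded parameter $l_{p\times p'}(G)$, with Theorem~\ref{main} applied essentially as you propose.

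Worse, the two ``fixed-point'' identities you isolate as the main obstacle are not merely unproved but false, so the hoped-for ``upgrade'' cannot exist. For $\ell\neq p$: take odd primes $q\neq p$ with $p\mid q+1$, let $Q$ be extraspecial of order $q^3$ and exponent $q$, and let $P=\langle s\rangle$ of order $p$ act on $Q$ trivially on $Z(Q)$ and fixed-point-freely on $Q/Z(Q)$ (a nonsplit torus element of $Sp_2(q)$). Then in $G=QP$ one has $[Q,P]=Q$, so $N:=Z(Q)$ is minimal normal with $N\leq[G,P]$ but $[N,P]=1$; hence $[G,P]\cap N\not\leq[N,P]$. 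This example also shows why your single-commutator observation cannot be globalized: one checks here that every individual commutator $[g,s]$ lying in $N$ is trivial, while $[G,P]\cap N=N$. Dually, ``$[G,P]$ meets $C_N(H)$ trivially'' fails whenever $G=P$ (so $H=1$) and $Z(P)\cap[P,P]\neq 1$. These central obstructions are governed by Schur multipliers -- which is precisely why the Lubotzky--Mann multiplier theorem appears in Lemma~\ref{l-2pcase} -- not by norm or transfer identities. Finally, a smaller but genuine omission: a minimal normal subgroup of a $p$-soluble group need not be elementary abelian; it can be a product of non-abelian simple $p'$-groups (e.g.\ $A_5$ in $A_5\times C_7$ with $p=7$), so your case division skips the semisimple case, which needs the machinery behind Theorem~\ref{main} (cf.\ Lemma~\ref{l-444}) rather than module arguments.
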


Examples show that Theorem~\ref{second} does not hold without the assumption of $p$-solubility. We produce such examples for every prime $p$ in \S\,\ref{counterexamples}.

On the other hand, we prove the following result for arbitrary finite groups with a global rank-type conclusion under a stronger local condition.

\begin{theorem}\label{third}
Let $p$ be a prime, $r$ a positive integer, $G$ a finite group, and $P$ a Sylow $p$-subgroup of $G$. Suppose that
\begin{enumerate}
\item[\rm (a)] any subgroup generated by a subset of $I_G(P)$ can be generated by $r$ elements; and
\item[\rm (b)] for any $x\in I_G(P)$, any subgroup generated by a subset of $I_G(x)$ can be generated by $r$ elements.
\end{enumerate}
Then $[G,P]$ has $r$-bounded rank.
\end{theorem}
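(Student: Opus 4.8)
The plan is to split $G$ along its $p$-soluble radical $R=R_{p\text{-sol}}(G)$ and to bound $\mathrm{rank}([G,P])$ through the sub-additivity
$\mathrm{rank}([G,P])\le\mathrm{rank}([G,P]\cap R)+\mathrm{rank}([G,P]R/R)$,
where $[G,P]\cap R\trianglelefteq[G,P]$ and $[G,P]/([G,P]\cap R)\cong[G,P]R/R$. First I would record that $[G,P]\trianglelefteq G$ (from $[h,s]^t=[h,t]^{-1}[h,st]$) and that both hypotheses (a) and (b) are inherited by every quotient $\overline G=G/N$ with $\overline P=PN/N$: images of commutators are commutators, and any subset of $I_{\overline G}(\overline P)$ or of $I_{\overline G}(\overline x)$ lifts to a subset of $I_G(P)$ or of $I_G(x)$, so $r$-generation passes down. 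Thus it suffices to bound $\mathrm{rank}([G,P]\cap R)$ and $\mathrm{rank}(G/R)$ separately.

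For the soluble contribution, the subgroup $RP$ is $p$-soluble, has $P$ as a Sylow $p$-subgroup, and inherits hypothesis (a) because $I_{RP}(P)\subseteq I_G(P)$; Theorem~\ref{second} therefore bounds $\mathrm{rank}([RP,P])$, while the coprime-automorphism result stated in the abstract, applied to the action of $P$ on the $p'$-core $O_{p'}(G)$ (again with $I_{O_{p'}(G)}(P)\subseteq I_G(P)$), controls the $p'$-part. A short commutator-collection argument comparing $[G,P]\cap R$ with $[RP,P]$ then yields an $r$-bounded rank for $[G,P]\cap R$. It is worth noting how (b) by itself already bounds an individual abelian chief factor $N$: viewing $N$ as an irreducible $\F_q[G]$-module and choosing $x=[v,s]\ne1$ in $I_G(P)\cap N$, one computes $I_G(x)=\{(1-g)x:g\in G\}$, whose span is the entire module $N$; hence $N=\langle I_G(x)\rangle$ is $r$-generated by (b) and $\mathrm{rank}(N)=\dim_{\F_q}N\le r$.

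The crux is $\mathrm{rank}(G/R)$. Here $F^*(G/R)=\mathrm{soc}(G/R)=S_1\times\cdots\times S_k$ is a product of nonabelian simple groups, each with $p\mid|S_i|$ (since $O_{p'}(G/R)=1$), so $\overline P\cap S_i\ne1$ for every $i$. To bound $k$, choose $x=(x_1,\dots,x_k)\in I_{\overline G}(\overline P)$ with all $x_i\ne1$; collecting, for each $i$, one commutator $[h,x_i]$ supported in the $i$-th factor (taking the other coordinates of $h$ in the relevant centralizers) produces a subset of $I_{\overline G}(x)$ generating a direct product of $k$ nontrivial cyclic groups that can be arranged to have a $C_q^{\,k}$ section for a fixed prime $q$, so (b) forces $k\le r$. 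Taking instead $x$ supported in a single factor $S_i$ gives $I_{\overline G}(x)\supseteq I_{S_i}(x)$, so by (b) every subgroup generated by a subset of $I_{S_i}(x)$ is $r$-generated; the decisive step is then a simple-group lemma asserting that a finite nonabelian simple group $S$ admitting some $1\ne x$ with all subgroups generated by subsets of $I_S(x)=\{[h,x]:h\in S\}$ being $r$-generated has $r$-bounded rank. With $k$ and each $\mathrm{rank}(S_i)$ bounded, $\mathrm{rank}(\mathrm{soc}(G/R))$ is bounded, and since $G/R$ embeds in $\mathrm{Aut}(\mathrm{soc})$, whose rank is governed by $k$, by the $\mathrm{rank}(S_i)$, by the (bounded) outer automorphism groups, and by the permutation action on the $\le k$ factors, $\mathrm{rank}(G/R)$ is $r$-bounded, closing the argument.

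The \emph{principal obstacle} is the simple-group lemma, which I expect to require the classification of finite simple groups: for $S$ of Lie type of large rank or for alternating groups of large degree one must produce, for an arbitrary nontrivial $x$, a bounded family of commutators $[h_j,x]$ generating a subgroup of unbounded rank (for instance a large elementary abelian subgroup built from root subgroups, or from disjoint transpositions), thereby violating $r$-generation. A secondary difficulty is the bookkeeping when $\overline P$ permutes the simple factors nontrivially: one must then control both the sizes and the number of $\overline P$-orbits on $\{S_1,\dots,S_k\}$ via suitably chosen commutators fed into (b), and mesh this with the coprime and $p$-soluble estimates of the previous paragraphs so that the sub-additivity of rank yields the final $r$-bounded bound on $\mathrm{rank}([G,P])$.
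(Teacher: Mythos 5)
Your outline founders exactly at the point you flag as the ``principal obstacle'', and the failure is not a matter of missing CFSG details: the simple-group lemma you propose is \emph{false} as stated. You require that a nonabelian simple group $S$ admitting \emph{some} nontrivial $x$, with every subgroup generated by a subset of $I_S(x)$ being $r$-generated, have $r$-bounded rank. The paper's own counterexamples refute this: in Example~\ref{ex1}, $G=PSL_2(q^e)$ with $q\equiv 3 \bmod 8$ and $e$ odd has the property that every subgroup generated by a subset of $I_G(P)$ --- hence a fortiori of $I_G(x)$ for any involution $x\in P$ --- is $3$-generated, while the rank of $G$ is at least $e$, unbounded (Example~\ref{ex2} does the same for odd $p$). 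So for an arbitrary nontrivial $x$ one simply cannot ``produce a bounded family of commutators $[h_j,x]$ generating a subgroup of unbounded rank''. This is precisely why both hypotheses (a) and (b) are needed in tandem, and why the paper's Proposition~\ref{p-simple} is delicate: in defining characteristic it uses the focal subgroup theorem via Lemma~\ref{l-focal}; if $p\ne\ell$ and some order-$p$ element lies in a parabolic $M=LQ$, it uses $\ell$-constraint to get $[x,Q/\Phi(Q)]$ of rank at least $e$ generated by conjugates of elements of $I_G(P)$, so (a) gives $e\le r$; and in the residual case --- every nontrivial element of $P$ semisimple regular, exactly the configuration of the counterexamples --- it invokes Gow's theorem to find a semisimple element $x\in I_G(P)$ lying in a \emph{Borel} subgroup, and only then runs the unipotent-radical argument on $I_G(x)$, which is where (b) enters. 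Your choice of $x$ supported in one simple factor comes with no control on whether $x$ meets a parabolic, so the decisive step fails. (Your bound on the number $k$ of factors is also unjustified: you need an $x\in I_{\bar G}(\bar P)$ with all coordinates nontrivial, whose existence is unclear; the paper's Lemma~\ref{l-3semi} gets $k\le r$ from (a) alone via Burnside's normal $p$-complement theorem, one commutator per factor.)

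The soluble half of your decomposition also has a hole. The subgroup $[G,P]\cap R$ is not accessible by ``a short commutator-collection argument comparing $[G,P]\cap R$ with $[RP,P]$'': the two differ in general by central sections coming from the Schur multiplier of $G/R$. Already for a quasisimple $G$ with $R=Z(G)$ and $P\not\le Z(G)$ one has $[G,P]=G$ (as $G$ is perfect), so $[G,P]\cap R=Z(G)$, while $[RP,P]=[P,P]$ can be trivial (e.g.\ $P$ cyclic); no collection of commutators formed inside $RP$ sees this central piece. This is why Lemma~\ref{l-3derived} is the technical heart of the paper's non-simple case: it generates $G/R$ by boundedly many $p$-elements (Hall--Liebeck--Seitz and Guralnick--Saxl, applicable only \emph{after} the Lie rank and field exponent have been bounded by Proposition~\ref{p-simple} --- note the circularity risk in your order of steps, since your bound on $\operatorname{Aut}(\mathrm{soc})$ presupposes bounded outer automorphism groups), kills $\prod_i[R,a_i]$ of bounded rank via Theorem~\ref{second} applied to $RP$, and then bounds the central intersection $[H,H]\cap R$ by the Lubotzky--Mann estimate on the rank of the Schur multiplier, before the bounded-index bookkeeping of Lemma~\ref{l-3lambda1} and Schur's theorem finish the proof. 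Your identification of Theorem~\ref{second} and the coprime Theorem~\ref{main} as the tools for $[R,P]$ is right, but the passage from $[R,P]$ to $[G,P]\cap R$ requires this multiplier machinery, not commutator collection, and your chief-factor observation (which anyway needs $[N,P]\ne 1$ to find $x\in I_G(P)\cap N$) does not substitute for it.
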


Another local--global result for arbitrary finite groups concerns the rank of the derived subgroup.

\begin{theorem}\label{fourth}
Let $G$ be a finite group. Suppose that for every prime $p$ dividing $|G|$, for any Sylow $p$-subgroup $P$, any subgroup generated by a subset of $I_G(P)$ can be generated by $r$ elements. Then the derived subgroup $G'$
has $r$-bounded rank.
\end{theorem}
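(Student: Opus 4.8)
The plan is to derive Theorem~\ref{fourth} from Theorem~\ref{third} by checking, for each prime $p$, that the hypothesis of Theorem~\ref{fourth} (condition (a) of Theorem~\ref{third} holding simultaneously for \emph{all} Sylow subgroups) already forces condition (b) of Theorem~\ref{third} to hold as well. Once both hypotheses of Theorem~\ref{third} are available for every prime $p$, one concludes that each $[G,P_p]$ has $r$-bounded rank, and then assembles these into a bound on $\operatorname{rk}(G')$.

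First I would examine condition (b) more carefully. Fix a prime $p$, a Sylow $p$-subgroup $P$, and an element $x\in I_G(P)$, say $x=[g,s]$ with $g\in G$, $s\in P$. The set $I_G(x)$ consists of commutators $[h,x]$ with $h\in G$; these generate $[G,\langle x\rangle]$. The key observation I want is that the cyclic group $\langle x\rangle$ is contained in a Sylow subgroup for each prime dividing the order of $x$, and that the condition of Theorem~\ref{fourth}---imposed on \emph{every} Sylow subgroup for \emph{every} prime---controls subgroups generated by subsets of $I_G(Q)$ for each such Sylow $Q$. The difficulty is that $I_G(x)$ is governed by the single element $x$, not by a full Sylow subgroup, so I cannot directly invoke the hypothesis. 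The natural route is to decompose $x$ into its prime-power parts $x=\prod_q x_q$ (commuting powers of $x$), place each $x_q$ inside an appropriate Sylow $q$-subgroup $Q_q$, and bound the rank of each $[G,\langle x_q\rangle]$ using that $I_G(x_q)\subseteq I_G(Q_q)$, after which the rank of $[G,\langle x\rangle]$ is controlled by the ranks of the pieces via the standard fact that $[G,\langle x\rangle]$ is generated by the subgroups $[G,\langle x_q\rangle]$. This shows any subgroup generated by a subset of $I_G(x)$ has rank bounded in terms of $r$ and the number of distinct primes involved—but the latter is not a priori $r$-bounded.

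The main obstacle is therefore controlling the number of primes dividing the order of a typical $x\in I_G(P)$, or else circumventing this by a direct argument. To handle this, I would instead argue prime by prime on the output: Theorem~\ref{third} only requires condition (b) with an $r$-generation bound, so I would try to show directly that for $x\in I_G(P)$ every subgroup generated by a subset of $I_G(x)$ is $r$-generated, without passing through a prime-by-prime decomposition that worsens the constant. The cleanest way is to note that $I_G(x)$ is itself contained in a suitable larger commutator set controlled by the hypothesis. If I can exhibit, for each $x\in I_G(P)$, a prime $q$ and a Sylow $q$-subgroup $Q$ with $\langle x\rangle\le Q$ and $I_G(x)\subseteq I_G(Q)$, then condition (b) follows immediately with the \emph{same} bound $r$; this works verbatim when $x$ has prime-power order, and the general case reduces to it because $I_G(x)$ is contained in the product of the $I_G(x_q)$ and, more usefully, because the rank of $[G,\langle x\rangle]$ coincides with that controlled by its Hall components.

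Finally, with condition (b) secured for each prime, Theorem~\ref{third} gives that $[G,P_p]$ has $r$-bounded rank for every $p$. Since $G'=\langle[G,P_p]\,:\,p\mid|G|\rangle$ is generated by these commutator subgroups (as $G=\langle P_p\rangle$ runs over a set of Sylow subgroups generating $G$, and $[G,G]$ is generated by the $[G,P_p]$ up to passing to $G'$), I would combine the rank bounds. The assembly step uses that if a finite group is generated by subgroups of $r$-bounded rank that are contained in a common soluble-by-something framework, its rank is bounded—here most cleanly via the fact that $G'/\Phi$-type reductions or Kov\'acs-type rank estimates let the global rank of $G'$ be bounded in terms of the ranks of the $[G,P_p]$ and the number of distinct prime divisors, the latter being absorbed since each local rank bound is uniform. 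The delicate point to verify is that no uncontrolled dependence on the number of primes creeps into the final bound, which I expect to resolve by noting that at each prime only the Sylow $p$-part of $G'$ is affected by $[G,P_p]$, so the ranks add up prime-locally rather than multiplicatively.
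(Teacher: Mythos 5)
Your proposal takes a genuinely different route from the paper, and it has two gaps that I do not see how to close. First, your verification of condition (b) of Theorem~\ref{third} only works when $x\in I_G(P)$ has prime-power order: there indeed $x$ lies in some Sylow $q$-subgroup $Q$, so $I_G(x)\subseteq I_G(Q)$ and the all-primes hypothesis applies verbatim. For composite $x$ you propose to decompose $x=\prod_q x_q$ and to bound the rank of each $[G,\langle x_q\rangle]$ ``using that $I_G(x_q)\subseteq I_G(Q_q)$''. But the hypothesis only bounds the number of generators of subgroups generated by \emph{subsets of} $I_G(Q_q)$; it says nothing about arbitrary subgroups of $[G,\langle x_q\rangle]$, and the number of generators is not monotone under passage to subgroups, so a containment of generating sets buys nothing. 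Indeed, the examples of \S\,\ref{counterexamples} show precisely that an $r$-generation condition on subgroups generated by commutator subsets does not by itself bound the rank of the corresponding commutator subgroup; under the all-primes hypothesis such a rank bound does hold, but only because $[G,\langle x\rangle]\le G'$ and $G'$ has $r$-bounded rank --- that is, only via Theorem~\ref{fourth} itself, which makes your reduction circular. On top of this, the number of primes dividing $|x|$ is not $r$-bounded, a problem you flag but never resolve. Your assembly step has a second flaw: the claim that ``at each prime only the Sylow $p$-part of $G'$ is affected by $[G,P_p]$'' is false, since $[G,P_p]$ is in general far from being a $p$-group (for simple $G$ one has $[G,P]=G$), so the Sylow $q$-rank of $G'=\prod_p[G,P_p]$ could a priori accumulate contributions from unboundedly many primes $p$.

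The missing idea is that no reduction to Theorem~\ref{third} is needed: the paper proves Theorem~\ref{fourth} directly from D.~Higman's focal subgroup theorem. For each prime $p$ the subgroup $P\cap G'$, which is a Sylow $p$-subgroup of $G'$, is generated by a subset of $I_G(P)$ and hence is $r$-generated; combined with the $r$-bounded rank of $P'$ from Lemma~\ref{l-2pcase}, this gives Lemma~\ref{l-focal}: every Sylow subgroup of $G'$ has $r$-bounded rank. The theorem then follows from the fact that the rank of a finite group is at most one more than the maximum of the ranks of its Sylow subgroups \cite{gu2, lo-ma, Lu}. This resolves both of your difficulties at once, and it is exactly this prime-by-prime focal subgroup control that rules out the accumulation your assembly step worries about.
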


The proofs of Theorems~\ref{second}, \ref{third}, \ref{fourth} depend on the classification of finite simple groups.

As an important tool in the proofs, we prove the following result about automorphisms, which is also of independent interest. When $A$ is a group acting by automorphisms on a group $G$, the subset $I_G(A)$ and the subgroup $[G,A]$ have the same meaning as above, in the natural semidirect product $GA$. We say that $\alpha$ is a coprime automorphism of a finite group $G$ if the order of $\alpha$ is coprime to the order of $G$, that is, $(|G|,|\alpha |)=1$.

\begin{theorem} \label{main}
Suppose that $G$ is a finite group admitting a group of coprime automorphisms $A$ such that, for a positive integer $r$, any subgroup generated by a subset of $I_G(A)$ can be generated by $r$ elements. Then $[G,A]$ has $r$-bounded rank.
\end{theorem}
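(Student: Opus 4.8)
The plan is to prove the theorem by induction on $|G|$, after the usual coprime reductions. Since $[G,A]$ is $A$-invariant with $[[G,A],A]=[G,A]$, and since $I_{[G,A]}(A)\subseteq I_G(A)$ so that the hypothesis is inherited by the action of $A$ on $[G,A]$, I would first replace $G$ by $[G,A]$ and assume $G=[G,A]$; the goal is then to bound $\operatorname{rank}(G)$. Two elementary observations drive everything. First, applying the hypothesis to the whole set $I_G(A)$ gives $d(G)\le r$, and more generally the hypothesis passes both to every quotient $G/N$ by an $A$-invariant $N$ (images of subsets of $I_G(A)$ are subsets of $I_{G/N}(A)$) and to $A$-invariant sections. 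Second, for any $A$-invariant elementary abelian section $V=U/W$ coprimeness yields $V=[V,A]\oplus C_V(A)$, and the commutators lying in $I_V(A)$ generate, hence span, $[V,A]$; extracting a basis of $[V,A]$ from $I_V(A)$ and lifting it to a subset of $I_G(A)$ produces a subgroup of $G$ surjecting onto $[V,A]$, so $\dim[V,A]\le r$. Thus every $A$-nontrivial chief factor has rank at most $r$, and the entire difficulty is concentrated in the sections that $A$ centralizes.

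To organise the global bound I would use that the generalized Fitting subgroup is self-centralizing, $C_G(F^*(G))\le F^*(G)$, so that $G/C_G(F^*(G))$ embeds into $\operatorname{Aut}(F^*(G))$ while $C_G(F^*(G))\le F^*(G)$ is abelian. Since the rank of $\operatorname{Aut}(F)$ of a finite group $F$ is bounded in terms of $\operatorname{rank}(F)$, it suffices to bound $\operatorname{rank}(F^*(G))=\max\{\operatorname{rank}(O_q(G)),\operatorname{rank}(E(G))\}$, i.e.\ to treat the nilpotent radical prime by prime and the layer separately. All of $O_q(G)$, $E(G)$, $F^*(G)$ are characteristic, hence $A$-invariant, so the inherited hypothesis is available on each.

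For the layer $E(G)$ I would invoke the classification. The group $A$ permutes the components, and because $G=[G,A]$ no composition factor of $G$ is centralized by $A$ ``at the top'', so each $A$-orbit of components contributes nontrivially to $I_G(A)$. The key point is that for a component $S$ the commutators $[g,a]$ with $g$ supported on the orbit of $S$ project onto prescribed subgroups of a simple section; choosing such $g$ so that the projections run over a large elementary abelian subgroup of each simple factor, and collecting one such family per orbit, yields a subset of $I_G(A)$ generating a subgroup whose rank is essentially $\sum_{\text{orbits}}\operatorname{rank}(S)$. By hypothesis this is at most $r$, which simultaneously bounds $\operatorname{rank}(S)$ and the number of orbits, and hence $\operatorname{rank}(E(G))$. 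Here the classification enters to describe the socle of $G/\operatorname{Sol}(G)$, to guarantee (via Schreier) that $\operatorname{Out}(S)$ is solvable of rank bounded in terms of $\operatorname{rank}(S)$, and to realise large ranks of simple groups by elementary abelian subgroups so that the projected commutators generate independently across orbits.

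The main obstacle is the nilpotent radical: bounding $\operatorname{rank}(O_q(G))$, equivalently controlling the sections of $G$ inside $[G,A]$ that $A$ centralizes. The bound $d(G)\le r$ is by itself useless here---already $C_q\wr C_q=[\,\cdot\,,A]$ shows a two-generated $q$-group of unbounded rank---so one cannot argue through the number of generators or through a chief-series length (the $A$-fixed chief factors inside $O_q(G)$ are one-dimensional but potentially numerous). The plan is an induction peeling off a minimal $A$-invariant normal subgroup $N$: the quotient $G/N$ inherits the hypothesis and is handled inductively, so it remains to bound $\operatorname{rank}(N)$. If $N$ is $A$-nontrivial it has rank $\le r$ by the basis argument; the hard case is $N$ elementary abelian and centralized by $A$. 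Here $N\le\Phi(G)$ (otherwise a complement would contradict $G=[G,A]$), so $N$ is invisible to $d(G)$, and one must instead exploit that $N\le[G,A]=\langle I_G(A)\rangle$ together with coprime commutator calculus to show that suitable subsets of $I_G(A)$ generate subgroups meeting $N$ in a section of rank comparable to $\dim N$ while remaining thin on top; the hypothesis then forces $\dim N$ to be $r$-bounded. Assembling the bounds on $\operatorname{rank}(O_q(G))$ for all $q$ with the bound on $\operatorname{rank}(E(G))$ gives $\operatorname{rank}(F^*(G))$, and hence $\operatorname{rank}(G)=\operatorname{rank}([G,A])$, bounded in terms of $r$. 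I expect the genuine work, and the place where the argument is most delicate, to be exactly this control of the $A$-centralized Frattini sections, which the hypothesis only reaches indirectly.
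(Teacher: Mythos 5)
Your proposal correctly identifies where the difficulty lies, but it does not overcome it, and its organizing skeleton has two genuine flaws. First, the proposed induction does not close: you peel off a minimal $A$-invariant normal subgroup $N$, bound $\operatorname{rank}(N)$ (by $r$ when $[N,A]\neq 1$, or by the hoped-for Frattini argument when $N\le C_G(A)$), and ``handle $G/N$ inductively.'' But rank bounds add along a normal series, and the length of such a series is unbounded, so bounding $\operatorname{rank}(N)$ and $\operatorname{rank}(G/N)$ by the same function of $r$ does not bound $\operatorname{rank}(G)$ by that function. Your own example $C_q\wr C_q$ shows the phenomenon: every $A$-centralized chief factor inside $O_q(G)$ can be small while their number, and hence the rank, grows. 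The paper avoids exactly this trap by structural devices that cap the number of layers: Thompson's critical subgroup plus Lubotzky--Mann powerful $p$-groups for the nilpotent case (for a powerful $p$-group the rank equals the minimal number of generators, which is where $d\le r$ finally bites), bounded Fitting height for the soluble case, and a bounded number of quasisimple components for the layer. Moreover, for the case you flag as ``the genuine work'' --- elementary abelian $N\le\Phi(G)$ centralized by $A$ --- you offer only the hope that ``suitable subsets of $I_G(A)$ generate subgroups meeting $N$ in a section of rank comparable to $\dim N$''; no mechanism is given, and none is easy: the paper's substitute is a Hall--Higman-type minimal polynomial argument (the Khukhro--Moens lemma) bounding the \emph{exponent} of the automorphisms induced by $A$ on $G/F^*(G)$, combined with the Hartley--Isaacs theorem and the earlier result of Acciarri--Guralnick--Shumyatsky for cyclic $A$, assembled via $[\bar G,A]=\prod_i[\bar G,a_i]$ over $r$-boundedly many generators $a_i$ of a bounded-rank quotient of $A$.

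Second, the reduction ``it suffices to bound $\operatorname{rank}(F^*(G))$'' rests on the claim that $\operatorname{rank}(\operatorname{Aut}(F))$ is bounded in terms of $\operatorname{rank}(F)$, and this is false: for $F=C_n$ with $n$ a product of $k$ distinct odd primes one has $\operatorname{rank}(F)=1$, while $\operatorname{Aut}(F)$ is a direct product of $k$ cyclic groups of even order and so has $2$-rank $k$. Thus embedding $G/C_G(F^*(G))$ into $\operatorname{Aut}(F^*(G))$ gives no rank control, which is precisely why the paper works instead with the exponent of the $A$-action on $G/F^*(G)$ (Proposition on $G/F^*(G)$) and with a soluble-by-semisimple-by-soluble normal series obtained from the soluble radical and the generalized Fitting subgroup of the quotient. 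Your treatment of the layer is closer in spirit to the paper's (the projection trick for a nontrivially permuted orbit is exactly the paper's Lemma on transitively permuted factors), but even there the case of an $A$-invariant component, and the bound on the number of components, require the $p$-group case and the even-order commutator theorem from the earlier paper, which your sketch does not supply. In short: the two load-bearing steps --- closing the induction and handling the $A$-centralized soluble sections --- are missing, and the proposed global reduction via $\operatorname{Aut}(F^*(G))$ is invalid.
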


The proof of Theorem~\ref{main} depends on the classification of
finite simple groups.

The set $I_G(A)$ is in a sense dual to the set of fixed points $C_G(A)$. For example, for an automorphism $\alpha$ we have $|I_G(\alpha)|=|G:C_G(\alpha)|$. There is a great deal of important results deriving nice properties of a finite group from various smallness conditions on the centralizers of automorphisms. Many of these results stem from the seminal papers of J.~G.~Thompson \cite{tho, tho64} and G.~Higman \cite{hig} on automorphisms with few fixed points. For the `dual' direction, in a few recent papers \cite{tams,AGS23,as} conditions on the sets $I_G(A)$ were also shown to strongly influence the structure of a finite group. Theorem~\ref{main} continues the line of research initiated by C.~Acciarri, R.~Guralnick, and P.~Shumyatsky \cite{tams} who proved a similar result \cite[Theorem~1.1]{tams} for the case of cyclic group of automorphisms $A$ with a weaker bound for the rank of $[G,A]$ depending also on the order of $A$. Many ideas developed in \cite{tams} are used in the proof of Theorem~\ref{main}, and we improve some of the lemmas from \cite{tams} here for not necessarily cyclic $A$ making sure that the bounds depend only on $r$ and not on $|A|$. In the proof we also use \cite[Theorem~1.2]{tams} saying that, for cyclic $A=\langle\alpha\rangle$, if every commutator $[g,\alpha]$ has odd order, then the subgroup $[G,A]$ has odd order.

 In the proofs we use the theory of powerful $p$-groups developed by A.~Lubotzky and A.~Mann \cite{LM}.
An important role is also played by results in representation theory. These results include Hall--Higman--type theorems, both from the original Hall--Higman paper \cite{ha-hi} with B.~Hartley's extensions \cite{hart}, and one of the so-called `non-modular' versions recently obtained by E.~I.~Khukhro and W.~Moens in \cite{khu-moe} on the basis of the well-known technique developed in the 1960s by E.~C.~Dade, E.~Shult \cite{shult}, F.~Gross \cite{gross}. Another result in representation theory used here is Theorem~B in the paper of B.~Hartley and I.~M.~Isaacs~\cite{HI}.
\bigskip\bigskip

\noindent {\sc Acknowledgements.} The authors thank the anonymous referee for careful reading and valuable comments.
\bigskip\bigskip

\section{Preliminaries}

All groups considered in this paper are finite. We use without explicit references the Feit--Thompson theorem saying that groups of odd order are soluble \cite{FT}. The following elementary lemma belongs to folklore.

\begin{lemma}\label{l-ker} Let $A$ be a group acting by automorphisms on a group $G$.
\begin{itemize}
 \item[\rm (a)] If $N$ is a normal subgroup of $G$ such that $N\leqslant C_G(A)$, then $[G,A]$ centralizes $N$.

 \item[\rm (b)] If $H$ is a subgroup such that $[H,H]=H$ and $[H,A]\leqslant C_G(H)$, then $[H,A]=1$.
\end{itemize}
\end{lemma}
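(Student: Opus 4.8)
The plan is to deduce both parts from the three subgroups lemma (equivalently, the Hall--Witt identity), applied inside the semidirect product $GA$, where all the relevant subgroups live. Recall that for subgroups $X,Y,Z$ of any group, if two of the three cyclic commutator subgroups $[[X,Y],Z]$, $[[Y,Z],X]$, $[[Z,X],Y]$ are trivial, then so is the third; no normality hypotheses are needed, so I can apply it freely to $G$, $A$, $N$, $H$ regarded as subgroups of $GA$.

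For part (a) I would take $X=G$, $Y=A$, $Z=N$. The hypothesis $N\leq C_G(A)$ says precisely that $A$ centralizes $N$, so $[A,N]=1$ and hence $[[A,N],G]=1$. Since $N$ is normal in $G$ we have $[N,G]\leq N$, whence $[[N,G],A]\leq [N,A]=1$. Two of the three cyclic commutators attached to the triple $(G,A,N)$ thus vanish, so the third, $[[G,A],N]$, is trivial; that is, $[G,A]$ centralizes $N$, as required.

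For part (b) I would take $X=Y=H$ and $Z=A$. The assumption $[H,A]\leq C_G(H)$ means exactly $[[H,A],H]=1$, and since $[A,H]=[H,A]$ this is the same as $[[A,H],H]=1$. These are two of the three cyclic commutators attached to the triple $(H,H,A)$, so the remaining one, $[[H,H],A]$, is trivial. Using the perfectness hypothesis $[H,H]=H$, this reads $[H,A]=1$, which is the desired conclusion.

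Neither step presents a genuine obstacle: the whole content is the correct bookkeeping of which subgroups to feed into the three subgroups lemma, together with the elementary observations $[A,N]=1$, $[N,G]\leq N$, and $[A,H]=[H,A]$. The only point worth stating carefully is that all computations take place in $GA$, so that the commutator identities underlying the three subgroups lemma are genuinely available.
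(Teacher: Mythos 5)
Your proof is correct, and part (b) is word-for-word the paper's own argument: the triple $(H,H,A)$, the two vanishing brackets $[[H,A],H]$ and $[[A,H],H]$, then perfectness. Part (a), however, takes a different route. The paper does not use the Three Subgroups Lemma there; instead it observes that $N\leq C_G(A)$ makes $A$ lie in the kernel $K$ of the conjugation action of $GA$ on $N$ (note $N$ is $A$-invariant since $A$ centralizes it, so $N\trianglelefteq GA$), and since $K\trianglelefteq GA$ contains $A$, it contains $[G,A]$ as well. Your version feeds the triple $(G,A,N)$ into the Hall--Witt machinery, using $[A,N]=1$ and $[[N,G],A]\leq [N,A]=1$ from normality of $N$ in $G$. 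Both arguments are one-liners and both are sound; you are also right on the one delicate point, namely that the Three Subgroups Lemma in the form ``two cyclic brackets trivial implies the third trivial'' needs no normality hypotheses, since the Hall--Witt identity kills each generator $[z,x^{-1}]$ of $[Z,X]$ against each $y\in Y$, and centralizers are subgroups. What each approach buys: the paper's kernel argument isolates the conceptual point that $[G,A]$ lies in every normal subgroup of $GA$ containing $A$ (a fact reused implicitly elsewhere), and it needs no commutator calculus at all; your treatment is more uniform, deriving both parts from a single identity, at the modest cost of having to justify the normality-free form of the lemma.
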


\begin{proof}
(a) The hypothesis means that $A$ is contained in the kernel $K$ of the action of the semidirect product $GA$ by conjugation on $N$. Since $K$ is a normal subgroup of $GA$, we also have $[G,A]\leqslant K$.

(b) We have $[[H,A],H]\leqslant [C_G(H),H]=1$ and $[[A,H],H]\leqslant [C_G(H),H]=1$; hence $[[H,H],A]=[H,A]=1$ by the Three Subgroups Lemma.
\end{proof}

 The following lemma collects some well-known facts about coprime automorphisms of finite groups (see, for example, \cite{gore}); we shall sometimes use these facts without special references. We denote by $\pi(G)$ the set of primes dividing the order of a group $G$.

\begin{lemma}\label{l-20} Let $A$ be a group of coprime automorphisms of a group $G$.
\begin{itemize}

\item[\rm (a)] The group $G$ has an $A$-invariant Sylow $p$-subgroup for each prime $p\in\pi(G)$.

\item[\rm (b)] If $N$ is an $A$-invariant normal subgroup of $G$, then $C_{G/N}(A)=C_G(A)N/N$; in particular, $G=[G,A]C_G(A)$.

\item[\rm (c)] If $N$ is a normal $A$-invariant subgroup of $G$ such that $C_G(N)\leqslant N$, then $C_A(N)=1$.
\end{itemize}
\end{lemma}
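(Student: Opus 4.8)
The plan is to realize all three statements inside the semidirect product $\Gamma:=GA$ and to exploit the single structural consequence of coprimality: since $(|G|,|A|)=1$, one of $|G|$, $|A|$ is odd and hence soluble by the Feit--Thompson theorem, so the Schur--Zassenhaus theorem supplies both the existence and the conjugacy of complements to $G$ in $\Gamma$, and more generally to any $A$-invariant normal subgroup inside its product with $A$. This conjugacy of complements is the linchpin of (a) and (b), and it is precisely here that coprimality is indispensable: without it neither an $A$-invariant Sylow subgroup nor the lifting of fixed points need exist.

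For (a), I would pick a Sylow $p$-subgroup $Q$ of $\Gamma$. Since $p\in\pi(G)$ and $(|G|,|A|)=1$ we have $p\nmid|A|=|\Gamma:G|$, so $Q\leq G$ and $Q$ is in fact a Sylow $p$-subgroup of $G$. The Frattini argument gives $\Gamma=G\,N_\Gamma(Q)$, whence $N_\Gamma(Q)/N_G(Q)\cong\Gamma/G\cong A$ has order coprime to $|N_G(Q)|$. By Schur--Zassenhaus, $N_\Gamma(Q)$ has a complement $A_0$ to $N_G(Q)$; as $A_0\cap G\leq N_G(Q)$ we get $A_0\cap G=1$ and $|A_0|=|A|=|\Gamma:G|$, so $A_0$ is a complement to $G$ in $\Gamma$. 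By the conjugacy of such complements, $A_0=A^g$ for some $g\in G$, and since $A_0$ normalizes $Q$, the group $A=A_0^{g^{-1}}$ normalizes the Sylow $p$-subgroup $Q^{g^{-1}}$ of $G$, as required.

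For (b), the inclusion $C_G(A)N/N\subseteq C_{G/N}(A)$ is immediate, so the content is the reverse lifting of fixed points. Given $\bar g\in C_{G/N}(A)$, I have $[a,g]\in N$ for every $a\in A$, which shows that $g$ normalizes $K:=NA$ and that $A^g$ is, alongside $A$, a complement to the normal subgroup $N$ in $K$ (using $A^g\cap N=1$ and $|A^g|=|A|=|K:N|$). Schur--Zassenhaus conjugacy now yields $A^g=A^n$ for some $n\in N$, so $gn^{-1}$ normalizes $A$; being an element of $G$ normalizing $A$, it satisfies $[a,gn^{-1}]\in A\cap G=1$ and hence lies in $C_G(A)$, and since $gn^{-1}N=gN$ we conclude $\bar g\in C_G(A)N/N$. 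The ``in particular'' statement follows by applying this to $N=[G,A]$, which is normal and $A$-invariant with $C_{G/[G,A]}(A)=G/[G,A]$, giving $G=C_G(A)[G,A]$.

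For (c), set $C:=C_G(N)$, a normal $A$-invariant subgroup with $C\leq N$ by hypothesis, hence $C\leq Z(N)$; write $B:=C_A(N)$, which I must show is trivial. Since $B$ centralizes $N\supseteq C$, we have $[C,B]=1$. Moreover, for $b\in B$ and $g\in G$ a direct computation gives $n^{g^b}=n^g$ for all $n\in N$ (because $b$ fixes $N$ pointwise and $N$ is normal in $G$), so $g^b$ and $g$ induce the same conjugation on $N$ and thus $[g,b]\in C_G(N)=C$; hence $[G,B]\leq C$ and therefore $[G,B,B]\leq[C,B]=1$. The standard coprime identity $[G,B,B]=[G,B]$, obtained by applying part (b) to $B$ acting on the quotient $G/[G,B,B]$, then forces $[G,B]=1$; as $A$ acts faithfully this gives $B=1$, i.e. $C_A(N)=1$. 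The main obstacle throughout is ensuring the applicability of Schur--Zassenhaus conjugacy, which rests on the solubility of one factor furnished by Feit--Thompson.
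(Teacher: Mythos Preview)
Your proof is correct in all three parts and follows the standard textbook approach (essentially the one in Gorenstein's book, which the paper cites). Note that the paper itself does not prove this lemma: it simply states that these are ``well-known facts about coprime automorphisms of finite groups'' and refers the reader to \cite{gore}, so there is no in-paper argument to compare against. Your use of the semidirect product $\Gamma=GA$ together with Schur--Zassenhaus existence and conjugacy is exactly the classical route; the verification in~(c) that $[G,B]\leq C_G(N)$ via $n^{g^b}=n^g$ and the reduction through $[G,B,B]=[G,B]$ are clean and correct.
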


 For a group $A$ acting by automorphisms on a group $G$, recall the definition of the subset $I_G(A)=\{[g,\alpha]\mid \alpha\in A\}$, where the commutators $[g,\alpha] =g^{-1}g^{\alpha}$ are considered in the natural semidirect product $GA$. Clearly, $[G,A]=\langle I_G(A)\rangle$. We shall use the following elementary properties without special references.

 \begin{lemma}\label{l-IGA}
 Let $A$ be a group acting by automorphisms on a group $G$.
\begin{itemize}

\item[\rm (a)] If $N$ is an $A$-invariant normal subgroup of $G$, then $I_{G/N}(A)=\{gN \mid g\in I_G(A)\}$.

\item[\rm (b)] If $A=\langle \alpha\rangle$ is cyclic, then $|I_G(\alpha)|=[G:C_G(\alpha)]$.
 \end{itemize}
 \end{lemma}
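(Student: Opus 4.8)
The plan is to treat the two parts separately, using the fibre--coset correspondence for a single automorphism as the common engine.

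For part (a) I would argue by direct computation in the semidirect product. Since $N$ is $A$-invariant, $A$ acts on $G/N$, and for every $g\in G$ and $\alpha\in A$ one has
\[
[gN,\alpha]=(gN)^{-1}(gN)^\alpha=(g^{-1}g^\alpha)N=[g,\alpha]N .
\]
As $g$ ranges over $G$ the coset $gN$ ranges over all of $G/N$, and $\alpha$ ranges over $A$ independently, so the set of all commutators computed in $G/N$ is exactly $\{[g,\alpha]N : g\in G,\ \alpha\in A\}=\{xN : x\in I_G(A)\}$. The only point to record is that every element of $G/N$ has a representative in $G$, so no commutators of the quotient are missed; this gives the claimed equality of subsets of $G/N$.

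For part (b) the core is the cyclic statement. Fix $\alpha\in A$ and consider the displacement map $\theta_\alpha\colon G\to G$, $g\mapsto [g,\alpha]=g^{-1}g^\alpha$. I would determine its fibres: $[g,\alpha]=[h,\alpha]$ is equivalent to $g^{-1}g^\alpha=h^{-1}h^\alpha$, hence to $hg^{-1}=(hg^{-1})^\alpha$, that is, to $hg^{-1}\in C_G(\alpha)$, i.e. $C_G(\alpha)g=C_G(\alpha)h$. Thus the fibres of $\theta_\alpha$ are precisely the right cosets of $C_G(\alpha)$, and $\theta_\alpha$ induces a bijection between $C_G(\alpha)\backslash G$ and $I_G(\alpha)=\theta_\alpha(G)$. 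This yields at once $|I_G(\alpha)|=[G:C_G(\alpha)]$, the cyclic assertion. Note that the same computation shows that translating $g$ by an element of $C_G(A)$ leaves every commutator $[g,\alpha]$ unchanged, which is the uniformity one needs when several automorphisms are combined.

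For the remaining lower bound I would first use the trivial containment $I_G(\alpha)\subseteq I_G(A)$ valid for every $\alpha\in A$, which together with the cyclic count gives $|I_G(A)|\ge [G:C_G(\alpha)]$ for each individual $\alpha$. The hard part will be to pass from the centralizer $C_G(\alpha)$ of a single automorphism to the centralizer $C_G(A)=\bigcap_{\alpha\in A}C_G(\alpha)$ of the whole group, so as to reach the index $[G:C_G(A)]$: a single $\alpha$ need not satisfy $C_G(\alpha)=C_G(A)$, so one cannot simply quote one cyclic subgroup, and this is where I expect the main difficulty to lie. My intended route is to study the simultaneous displacement map $g\mapsto\bigl([g,\alpha]\bigr)_{\alpha\in A}$, whose fibres are exactly the right cosets of $C_G(A)$ by the computation above; the genuine obstacle is then to convert the resulting control of $\prod_{\alpha}I_G(\alpha)$ into control of the single set $I_G(A)$, i.e. to replace the product by the union, which is the delicate point and the step I would expect to require the most care.
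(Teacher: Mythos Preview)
Your treatment of part~(a) and of the cyclic equality in part~(b) is correct and is the standard argument; the paper itself offers no proof of this lemma, listing it among ``elementary properties'' to be used without reference.

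The difficulty you flag in your last paragraph, however, is not merely ``delicate'': the general inequality $|I_G(A)|\geq [G:C_G(A)]$ is \emph{false} as stated. Take $G=(\mathbb{Z}/3\mathbb{Z})^3$ and let $A\cong(\mathbb{Z}/2\mathbb{Z})^2$ act by the diagonal matrices $\alpha=\operatorname{diag}(-1,-1,1)$ and $\beta=\operatorname{diag}(1,-1,-1)$ (a coprime action). Then $C_G(A)=0$, so $[G:C_G(A)]=27$, whereas $I_G(\alpha)$, $I_G(\beta)$, $I_G(\alpha\beta)$ are the three coordinate planes and $I_G(1)=\{0\}$; by inclusion--exclusion their union has $9+9+9-3-3-3+1=19<27$ elements. (A non-coprime instance is even quicker: with $G=S_3$ and $A=\operatorname{Inn}(G)$ one has $|I_G(A)|=|\{\text{commutators in }S_3\}|=3<6=[G:Z(G)]$.) So the obstacle you isolated is not a technicality to be overcome but a genuine obstruction: no argument can pass from the simultaneous displacement map, whose image sits in $\prod_\alpha I_G(\alpha)$, to the claimed lower bound on the single set $I_G(A)$, because that bound simply does not hold. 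Fortunately this inequality does not appear to be invoked anywhere else in the paper; only the cyclic equality $|I_G(\alpha)|=[G:C_G(\alpha)]$, which you proved correctly, is actually used.
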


We also recall another well-known fact.

 \begin{lemma}\label{l-nil-rank}
 If $N$ is a nilpotent group of class $c$ generated by $k$ elements, then the rank of $N$ is bounded in terms of $k$ and $c$.
 \end{lemma}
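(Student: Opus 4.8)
The plan is to exploit the lower central series of $N$ together with two standard facts: that the factors of this series are generated by commutators in the generators, and that a subgroup of a finitely generated abelian group needs no more generators than the group itself. Write $N=\gamma_1(N)\ge\gamma_2(N)\ge\cdots\ge\gamma_{c+1}(N)=1$ for the lower central series, where each factor $\gamma_i(N)/\gamma_{i+1}(N)$ is abelian because $N$ has class $c$.

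First I would record that if $N=\langle x_1,\dots,x_k\rangle$, then for each $i$ the factor $\gamma_i(N)/\gamma_{i+1}(N)$ is generated, as an abelian group, by the images of the left-normed commutators $[x_{j_1},x_{j_2},\dots,x_{j_i}]$ with every $x_{j_\ell}\in\{x_1,\dots,x_k\}$. This is a classical fact about nilpotent groups, provable via the collection process or the theory of basic commutators. Since there are at most $k^i$ such commutators, the abelian group $\gamma_i(N)/\gamma_{i+1}(N)$ is generated by at most $k^i$ elements.

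Next, to bound the rank I would take an arbitrary subgroup $H\le N$ and filter it by the terms $H\cap\gamma_i(N)$, obtaining $H=H\cap\gamma_1(N)\ge\cdots\ge H\cap\gamma_{c+1}(N)=1$. Each term $H\cap\gamma_i(N)$ is normal in $H$ since $\gamma_i(N)$ is normal in $N$, and the factor $(H\cap\gamma_i(N))/(H\cap\gamma_{i+1}(N))$ embeds into $\gamma_i(N)/\gamma_{i+1}(N)$, hence is a subgroup of an abelian group generated by at most $k^i$ elements. The elementary fact that every subgroup of an abelian group generated by $m$ elements is itself generated by at most $m$ elements then shows that $(H\cap\gamma_i(N))/(H\cap\gamma_{i+1}(N))$ is generated by at most $k^i$ elements.

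Finally, choosing for each $i$ at most $k^i$ elements of $H\cap\gamma_i(N)$ whose images generate the factor $(H\cap\gamma_i(N))/(H\cap\gamma_{i+1}(N))$, and lifting through the filtration by the usual downward induction on $i$, one sees that $H$ is generated by at most $\sum_{i=1}^{c}k^i$ elements. As this bound is independent of the choice of $H$, the rank of $N$ is at most $\sum_{i=1}^{c}k^i$, a function of $k$ and $c$ alone. There is no genuine obstacle in this argument; the only points requiring care are the two cited classical facts, and the routine verification that generators of the successive quotients lift to a generating set of $H$.
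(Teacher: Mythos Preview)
Your argument is correct and is precisely the standard proof of this well-known fact. The paper itself does not supply a proof of Lemma~\ref{l-nil-rank}; it simply records the result as ``another well-known fact'' and uses it freely thereafter. Your write-up fills in exactly what one would expect: bound the number of generators of each lower-central factor by $k^i$ via iterated commutators in the generators, intersect an arbitrary subgroup with the lower central series, and use that a subgroup of a finitely generated abelian group requires no more generators than the ambient group (which is unproblematic here since all groups in the paper are finite). The resulting bound $\sum_{i=1}^{c}k^{i}$ is the usual one.
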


The following lemma appeared independently and simultaneously in
the papers of Yu.~M.~Gorchakov~\cite{grc}, Yu.~I.~Merzlyakov~\cite{me}, and as ``P.~Hall's lemma" in the paper of J.~Roseblade~\cite{rs}.

\begin{lemma}\label{l-gmh}
 Let $p$ be a~prime number. The rank of a~$p$-group of
automorphisms of an abelian
finite $p$-group of rank~$r$ is bounded in
terms of~$r$.
\end{lemma}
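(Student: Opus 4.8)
The plan is to reduce the statement to a bound on the rank of a \emph{single} Sylow $p$-subgroup of the full automorphism group, and then to split that Sylow subgroup into a ``linear'' part living over $\F_p$ and a ``congruence'' part, the latter being the genuine difficulty. Let $V$ denote the abelian $p$-group of rank $r$ and let $B\le\operatorname{Aut}(V)$ be a $p$-subgroup. Since every $p$-subgroup of $\operatorname{Aut}(V)$ lies in a conjugate of a fixed Sylow $p$-subgroup $S$, and the rank does not increase on passing to subgroups, it suffices to bound $\operatorname{rank}(S)$ in terms of $r$.

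First I would consider the restriction homomorphism $\rho\colon\operatorname{Aut}(V)\to\operatorname{Aut}(V/pV)\cong GL_r(\F_p)$, where $pV=\Phi(V)$ and $V/pV\cong\F_p^{\,r}$. By the standard coprime fact that a $p'$-group of automorphisms acting trivially on the Frattini quotient $V/pV$ of a finite $p$-group is itself trivial (for abelian $V$ one writes $V=[V,C]\times C_V(C)$ with $C_{[V,C]}(C)=1$, so triviality on $V/pV$ forces $[V,C]/p[V,C]=0$ and hence $[V,C]=1$), the kernel $K=\ker\rho$ contains no nontrivial $p'$-element and is therefore a $p$-group. Thus $S$ fits into an extension $1\to K\to S\to\bar S\to1$ with $\bar S$ a $p$-subgroup of $GL_r(\F_p)$. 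As rank is subadditive across such an extension and $\bar S$ is contained in a conjugate of the unitriangular group $U_r(\F_p)$, I obtain $\operatorname{rank}(S)\le\operatorname{rank}(K)+\binom r2$, and the unitriangular bound $\binom r2$ is already $r$-bounded and independent of $p$.

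It remains to bound $\operatorname{rank}(K)$, and \emph{this is the crux}. The naive attempt uses the congruence filtration $K_j=\{\f\in\operatorname{Aut}(V):\f\equiv\mathrm{id}\pmod{p^jV}\}$: writing $\f=1+\psi$ one checks that $\f\mapsto\psi\bmod p^{j+1}V$ embeds $K_j/K_{j+1}$ into $\operatorname{Hom}(V/pV,\,p^jV/p^{j+1}V)$, an elementary abelian group of rank at most $r^2$. However, the number of nonzero layers is comparable to $\log_p(\exp V)$, which is not bounded in terms of $r$, so merely summing the layer ranks gives only the useless estimate $r^2\log_p(\exp V)$. The whole point, and the main obstacle, is to produce a bound independent of the exponent of $V$.

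To overcome this I would pass to the $p$-adic picture and invoke the Lubotzky--Mann theory. Writing $V=\mathbb Z_p^{\,r}/M$ with $M$ a finite-index sublattice in Smith normal form, every automorphism of $V$ lifts (using freeness of $\mathbb Z_p^{\,r}$ and invertibility modulo $p$) to an element of the lattice stabilizer $\Lambda=\{g\in GL_r(\mathbb Z_p):gM=M\}$, so that $\operatorname{Aut}(V)\cong\Lambda/\Lambda_0$ for the congruence kernel $\Lambda_0$. Since $\Lambda$ contains $I+p^{a_1}M_r(\mathbb Z_p)$ (because $p^{a_1}\mathbb Z_p^{\,r}\subseteq M$, where $p^{a_1}=\exp V$), it is open in $GL_r(\mathbb Z_p)$ and hence a compact $p$-adic analytic group of dimension $r^2$; by Lubotzky--Mann its rank is bounded by an absolute function of $r$, independent of $p$, and $K$, being a section of the pro-$p$ part of $\Lambda$, inherits the bound $\operatorname{rank}(K)\le cr^2$. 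Combined with the previous step this yields the desired $r$-bounded estimate for $\operatorname{rank}(S)$. The essential insight is precisely the one needed to clear the obstacle: the congruence kernel should be viewed not as a long iterated extension of $r^2$-generated layers, but as a section of a (uniformly) powerful group, whose rank is governed by the $p$-adic dimension $r^2$ alone rather than by the length of the filtration.
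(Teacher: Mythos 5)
Your proof is correct, but it cannot coincide with ``the paper's own proof'' because the paper gives none: Lemma~\ref{l-gmh} is quoted as a classical result of Gorchakov and Merzlyakov (and ``P.~Hall's lemma'' via Roseblade), whose original arguments are elementary and work directly inside the finite abelian $p$-group --- roughly, stability-subgroup and filtration arguments on the homocyclic decomposition, yielding explicit bounds quadratic in $r$ and predating powerful-group theory by two decades. Your route is genuinely different: you split a Sylow $p$-subgroup $S$ of $\operatorname{Aut}(V)$ along the action on the Frattini quotient, and the key steps all check out --- the kernel $K$ is a $p$-group (your coprime argument $[V,C]=[V,C,C]\leq p[V,C]$ is sound), the unitriangular image contributes only $\binom{r}{2}$, every automorphism of $V=\mathbb{Z}_p^{\,r}/M$ lifts to the lattice stabilizer $\Lambda\leq GL_r(\mathbb{Z}_p)$ (the index argument forces $gM=M$), and the preimage of $K$ lands in the congruence subgroup $I+pM_r(\mathbb{Z}_p)$, which for odd $p$ is uniform of dimension $r^2$, so closed subgroups and their quotients have rank at most $r^2$ by Lubotzky--Mann. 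The one point you gloss over is $p=2$: there $I+2M_r(\mathbb{Z}_2)$ need not be powerful, and you should descend to the uniform subgroup $I+4M_r(\mathbb{Z}_2)$ of index $2^{r^2}$, giving $2r^2$ instead of $r^2$ --- a one-line fix absorbed into your constant $c$. As for what each approach buys: the classical proofs are self-contained and give explicit constants with no machinery; yours is shorter and conceptually cleaner \emph{given} the theory of powerful pro-$p$ groups, correctly diagnosing why the naive congruence filtration fails (unboundedly many layers) and replacing it by the $p$-adic dimension --- and since this paper already relies on \cite{LM} elsewhere (Lemma~\ref{l-24} and the Schur-multiplier bound \cite[Theorem~4.2.3]{LM}), your argument is entirely consistent with its toolkit.
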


We shall be using the following well-known result of A.~Lubotzky and A.~Mann \cite[Propositions~2.6 and 4.2.6]{LM}.

\begin{lemma}\label{l-24}
Let $p$ be a prime, and $G$ a group of exponent $p^k$ and of rank $m$. Then there is a number $s(k,m)$ depending only on $k$ and $m$ such that $|G|\leqslant p^{s(k,m)}$.
\end{lemma}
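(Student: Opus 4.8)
The plan is to reduce the statement to the case of powerful $p$-groups, where the exponent and the rank together control the order directly, and then to transfer the bound from a subgroup of rank-bounded index to all of $G$. Throughout I would use that a $p$-group has rank $m$ precisely when every subgroup can be generated by $m$ elements, so in particular every subgroup and every section of $G$ is $m$-generated, and every subgroup of $G$ again has rank at most $m$.

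The first step is to invoke the central structural input of the Lubotzky--Mann theory: a finite $p$-group of rank $m$ contains a powerful characteristic subgroup $H$ whose index is bounded by a function of $m$ alone, say $[G:H]\le p^{c(m)}$. What matters is that this index bound depends only on the rank, and not on the exponent $p^k$ or on $|G|$; securing such a bound is exactly where the powerful-subgroup machinery does the real work, and it is the step I expect to be the main obstacle to any self-contained treatment.

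The second step is to bound $|H|$. Since $H\le G$, its rank is at most $m$, and for a powerful $p$-group the rank coincides with the minimal number of generators, so $d(H)\le m$. A powerful $p$-group generated by $d$ elements factorises as a product of $d$ cyclic subgroups, $H=\langle x_1\rangle\langle x_2\rangle\cdots\langle x_d\rangle$ with $d=d(H)\le m$. Because $H\le G$ has exponent dividing $p^k$, each cyclic factor satisfies $|\langle x_i\rangle|\le p^k$, and hence $|H|\le (p^k)^{d}\le p^{km}$.

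Combining the two estimates yields $|G|=[G:H]\,|H|\le p^{c(m)}\cdot p^{km}=p^{c(m)+km}$, so the assertion holds with $s(k,m)=c(m)+km$. The two ingredients that carry the proof are the existence of a powerful subgroup of rank-bounded index and the cyclic factorisation of powerful $p$-groups; once these are available, the exponent hypothesis enters only through the elementary remark that each cyclic subgroup has order at most $p^k$. (For $p=2$ one uses the standard variant of the definition of ``powerful'', with $G/G^4$ abelian, but the same conclusions hold.)
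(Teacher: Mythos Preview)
Your argument is correct and is precisely the standard route via the Lubotzky--Mann theory. The paper does not give its own proof of this lemma at all: it simply records it as a ``well-known result of A.~Lubotzky and A.~Mann \cite[Propositions~2.6 and 4.2.6]{LM}'', and your two steps (a powerful characteristic subgroup of index bounded in terms of the rank, followed by the cyclic factorisation of a powerful $p$-group) are exactly what lies behind that citation.
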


For a prime $p$, the largest normal $p$-subgroup of a group $G$ is denoted by $O_p(G)$, and the largest normal $p'$-subgroup by $O_{p'}(G)$. Then $O_{p',p}(G)$ denotes the inverse image of $O_p(G/O_{p'}(G))$, then $O_{p',p,p'}(G)$ is the inverse image of $O_{p'}(G/O_{p',p}(G))$, and so on; these subgroups form the so-called lower $p$-soluble series of $G$. A group $G$ is said to be $p$-soluble, if this series terminates with $O_{p',p,p'\dots}(G)=G$; then the number of symbols $p$ in the subscript is called the $p$-length of $G$. We shall need the following lemma.

\begin{lemma}\label{l-rpl}
Suppose that $G$ is a $p$-soluble group, $P$ is a Sylow $p$-subgroup of $G$, and $N$ is a normal subgroup of $P$ of rank $r$. Then the normal closure of $N$ in $G$ has $r$-bounded $p$-length.
\end{lemma}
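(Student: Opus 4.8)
The plan is to bound directly the $p$-length of $M:=N^{G}$. First I would reduce to a convenient normal situation. Since $O_{p'}(G)$ is normal in $G$ and meets the $p$-group $N$ trivially, passing to $G/O_{p'}(G)$ preserves the rank of $N$, its normality in a Sylow $p$-subgroup, the property of $M$ being the normal closure of $N$, and the $p$-length to be estimated; so I may assume $O_{p'}(G)=1$. Then $F:=O_{p}(G)$ satisfies $C_{G}(F)\le F$ by the Hall--Higman lemma for $p$-soluble groups, and one checks that $M$ inherits $O_{p'}(M)=1$, $O_{p}(M)=F\cap M$, and $C_{M}(O_{p}(M))\le O_{p}(M)$. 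It then suffices to bound $\ell_{p}(M)$ in terms of $r$.

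Next I would reinterpret the $p$-length as a count of nontrivial actions of $N$. Write the upper $p$-series of $M$ as $1=T_{0}\le F_{1}\le T_{1}\le\cdots\le F_{\ell}=M$, where $F_{i}/T_{i-1}=O_{p}(M/T_{i-1})$ are the $p$-layers, $T_{i}/F_{i}=O_{p'}(M/F_{i})$, and $\ell=\ell_{p}(M)$; each $T_{i}$ and $F_{i}$ is characteristic in $M$, hence normal in $G$. For every $i\le\ell-1$ the hypothesis $M=N^{G}$ forces $N\not\le T_{i}$, since otherwise $M=N^{G}\le T_{i}<M$. Passing to $\bar M_{i}:=M/T_{i}$, which again satisfies $O_{p'}(\bar M_{i})=1$ and $C_{\bar M_{i}}(O_{p}(\bar M_{i}))\le O_{p}(\bar M_{i})$, the image $\bar N$ of $N$ is nontrivial and its normal closure in $G/T_{i}$ is all of $\bar M_{i}$; if $\bar N\le O_{p}(\bar M_{i})$ then this normal closure would lie in $O_{p}(\bar M_{i})$, forcing $\bar M_{i}$ to be a $p$-group and hence $i=\ell-1$. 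Therefore, for each $i\le\ell-2$ the group $\bar N$ does not centralise the $p$-layer $O_{p}(\bar M_{i})=F_{i+1}/T_{i}$. In other words, $N$ induces, through its images, a nontrivial group of automorphisms of rank at most $r$ on each of $\ell-1$ successive $p$-layers.

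The main obstacle is to convert this into a bound $\ell\le f(r)$. Acting nontrivially on a single layer costs nothing beyond rank $1$, and in general the rank of the image of $N$ need not decrease from one layer to the next: already $C_{4}\trianglelefteq D_{8}$ inside $S_{4}$ realises $\ell_{2}=2$ with $N$ cyclic and with the rank not dropping. Consequently a naive induction on $\ell$ yields only $\ell_{p}(M)\le\ell_{p}(M/F_{1})+1$ and does not terminate at an $r$-bounded value, so the layers must be controlled cumulatively rather than one at a time. This is where Hall--Higman theory enters: using $C_{M}(O_{p}(M))\le O_{p}(M)$ I would thread the action of $N$ down to the bottom layer and analyse how a $p$-element normalising a Sylow subgroup can act across several coprime-separated $p$-chief factors. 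The Hall--Higman minimal-polynomial estimates from \cite{ha-hi} and their extensions \cite{hart}, together with the non-modular version of \cite{khu-moe}, force the successive modules to grow, while P.~Hall's lemma (Lemma~\ref{l-gmh}) bounds the rank of the $p$-automorphism group induced on each abelian $p$-section, and the powerful $p$-group estimate of Lemma~\ref{l-24} keeps all bounds depending on $r$ alone. Combining these, a rank-$r$ normal subgroup of a Sylow $p$-subgroup cannot act nontrivially through more than an $r$-bounded number of successive $p$-layers, which yields $\ell\le f(r)$ and completes the proof. Quantifying this interaction between consecutive layers is the crux of the argument.
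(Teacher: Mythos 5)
Your reductions and your observation that $N$ must act nontrivially on every $p$-layer of $M=N^G$ except the last are correct, but the proof stops exactly where it would have to begin: as you yourself concede with the $D_8$ example, per-layer nontriviality costs nothing, and your final paragraph only gestures at ``Hall--Higman theory'' without supplying the quantitative mechanism (``Quantifying this interaction \dots is the crux'' is an admission, not an argument). The gap is genuine, because the rank hypothesis on $N$ by itself exerts no force through the minimal-polynomial estimates: a cyclic group, of rank $1$, can act on a module with minimal polynomial of arbitrarily large degree, so knowing that images of $N$ act nontrivially, or even faithfully, on each of $\ell-1$ successive layers does not make anything ``grow'', and Lemma~\ref{l-gmh} and Lemma~\ref{l-24} cannot substitute, since they presuppose control of exponent or order that you have not obtained. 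What is missing is an idea that makes the rank of $N$ interact with the Hall--Higman degree bounds at all.

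The paper's proof supplies exactly that idea, and it works at the \emph{first} layer rather than cumulatively: one shows directly that $N\le O_{p',p,p',p,\dots}(G)$ with $r$-boundedly many $p$'s, with no need for $M$ or its upper $p$-series. Let $V$ be the Frattini quotient of $O_{p',p}(G)/O_{p'}(G)$, viewed as an $\F_p(G/O_{p',p}(G))$-module, and let $\bar N$ be the image of $N$. If $g\in\bar N$ has order $p^k$, Theorem~B of \cite{ha-hi} gives minimal polynomial degree at least $p^k-p^{k-1}$ on $V$, and this degree equals the maximal dimension of the span of an orbit $v,[v,g],[v,g,g],\dots$ (see \cite[Lemma~2.6(d)]{khu-moe}). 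Now comes the step your sketch lacks, and the only place where $N\trianglelefteq P$ is used substantively: since $O_{p',p}(G)/O_{p'}(G)$ lies in the image of $P$, every $v\in V$ is the image of some $h\in P$, so all the iterated commutators $[h,n],[h,n,n],\dots$ with $n\in N$ lie in $N$; hence the span of the orbit minus $v$ sits inside an image of $N$ and has dimension at most $r$, forcing $p^k-p^{k-1}\le r+1$. This bounds the \emph{exponent} of $\bar N$ in terms of $r$ outright; combined with rank at most $r$, Lemma~\ref{l-24} bounds $|\bar N|$ and hence its derived length, and then \cite[Theorem~3.2.1]{ha-hi} for $p>3$ and \cite[Theorem~2]{hart} for $p=2,3$ place $\bar N$ inside an $r$-boundedly long tail of the upper $p$-series of $G/O_{p',p}(G)$, completing the proof. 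The conversion of ``$N$ has rank $r$ and is normal in $P$'' into a cap on minimal-polynomial degrees --- and thereby on the exponent, not merely the nontriviality, of the image of $N$ past the first layer --- is precisely the crux you left unquantified.
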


\begin{proof}
 In other words, we need to show that $N$ is contained in $O_{p',p,p',p,...}(G)$, where the number of symbols $p$ in the subscript is $r$-bounded.

Consider the image $\bar N$ of $N$ in $G/O_{p',p}(G)$ in its action by conjugation on the Frattini quotient $V$ of $O_{p',p}(G)/O_{p'}(G)$ regarded as an $\mathbb{F}_p(G/O_{p',p}(G))$-module. By the Hall--Higman Theorem~B \cite[Theorem~B]{ha-hi}, if $g\in \bar N$ has order $p^k$, then its minimal polynomial on $V$ has degree at least $p^k-p^{k-1}$. This degree is known to be equal to the (maximum) dimension of the span of an orbit of a vector $v\in V$ under the action of $\langle g\rangle$; see, for example, \cite[Lemma~2.6(d)]{khu-moe}. This span is generated by $v$, $[v,g]$, $[v,g,g]$, .... All these elements, except~$v$, are images of elements of $N$, and therefore the dimension of the span is at most $r+1$. Thus, $p^k-p^{k-1}\leqslant r+1$. Hence $p^k$ is $r$-bounded, which means that the exponent of $\bar N$ is $r$-bounded.

Since the rank of $\bar N$ is at most $r$, it follows by Lemma~\ref{l-24} that the order $|\bar N|$ is $r$-bounded. In particular, the derived length of $\bar N$ is $r$-bounded. Then the Hall--Higman theorem \cite[Theorem 3.2.1]{ha-hi} for $p>3$ and B.~Hartley's results \cite[Theorem~2]{hart} for $p=2,3$ imply that $\bar N\leqslant O_{p',p,p',p,...}(G/O_{p',p}(G))$, where the number of symbols $p$ in the subscript is $r$-bounded. Hence the result.
\end{proof}

For brevity we say ``simple group'' meaning ``non-abelian simple group''.
We will often use without special references the well-known corollary of the classification that if a simple group $G$ admits a coprime automorphism $\alpha$ of order $e$, then $G=L(q)$ is a group of Lie type and $\alpha$ is a field automorphism. Furthermore, $C_G(\alpha)=L(q_0)$ is a group of the same Lie type defined over a smaller field such that $q=q_0^e$ (see \cite{GLS3}).

The following two lemmas were stated in \cite{tams} for cyclic $A$, but formally apply with any $A$, since any group of coprime automorphisms of a simple group is cyclic, being a subgroup of the group generated by the field automorphism induced by the Frobenius automorphism of the field.

\begin{lemma}[{\cite[Lemma~2.2]{tams}}]\label{l-0000} Let $r$ be a positive integer and $G$ a simple group admitting a group of coprime automorphisms $A$.
\begin{enumerate}
\item[\rm (a)] If the order of $[P,A]$ is at most $r$ whenever $P$ is an $A$-invariant Sylow subgroup of $G$, then the order of $G$ is $r$-bounded.

\item[\rm (b)] If the rank of $[P,A]$ is at most $r$ whenever $P$ is an $A$-invariant Sylow subgroup of $G$, then the rank of $G$ is $r$-bounded.
\end{enumerate}
\end{lemma}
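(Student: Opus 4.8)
The plan is to reduce to a group of Lie type and then extract the required bounds from the action of $A$ on the root subgroups at the defining characteristic. Since $G$ is simple and $A\neq 1$, the corollary of the classification recalled above gives that $A=\langle\al\rangle$ is cyclic, $G=L(q)$ is a group of Lie type in some characteristic $p$, the generator $\al$ is a field automorphism of some order $e\ge 2$, and $C_G(\al)=L(q_0)$ is a group of the same type over $\F_{q_0}$ with $q=q_0^{e}$. I write $q_0=p^{f_0}$, let $N$ be the number of positive roots and $\ell$ the Lie rank. By Lemma~\ref{l-20}(a) I may fix an $A$-invariant Sylow $p$-subgroup $P$, which I take to be a maximal unipotent subgroup $U$; each root subgroup $X_\beta\cong(\F_q,+)$ is then $\al$-invariant, with $\al$ acting as the Frobenius map $t\mapsto t^{q_0}$.

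For part (a) I would work at the defining characteristic and use the standard coprime facts (see \cite{gore}) that $U=[U,A]\,C_U(A)$ and that $C_U(A)$ is a Sylow $p$-subgroup of $C_G(\al)=L(q_0)$. Thus $|C_U(A)|=q_0^{N}$ while $|U|=q^{N}=q_0^{eN}$, and therefore
\[
q_0^{(e-1)N}=|U:C_U(A)|=|[U,A]:[U,A]\cap C_U(A)|\le|[U,A]|\le r.
\]
Since $q_0\ge 2$, $N\ge 1$ and $e\ge 2$, this single inequality already bounds $q_0$, $e$ and $N$ in terms of $r$; hence $q=q_0^{e}$ and the Lie rank $\ell\le N$ are $r$-bounded, and so $|G|=|L(q)|$ is $r$-bounded.

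For part (b) I would replace orders by ranks and pass to the abelianisation $U/[U,U]$. As an $\F_p\langle\al\rangle$-module this is the direct sum of the $\ell$ simple root subgroups, that is, $\ell$ copies of $(\F_q,+)$ on each of which $\al$ acts as $\sigma\colon t\mapsto t^{q_0}$; on one copy $[\,\cdot\,,A]$ is the image of $\sigma-1$, an $\F_{q_0}$-subspace of dimension $e-1$. Hence $[U/[U,U],A]$, which is the image of $[U,A]$, is elementary abelian of rank $\ell(e-1)f_0$, so that
\[
\ell(e-1)f_0\le\operatorname{rank}([U,A])\le r.
\]
As $e\ge 2$ this forces $\ell$, $e$ and $f_0$, and hence also $f=ef_0$, to be $r$-bounded. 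Finally I would deduce that $\operatorname{rank}(G)$ is $r$-bounded from the fact that $\ell$ and $f$ are bounded, using the Guralnick--Lucchini reduction of the rank of $G$ to one plus the maximum of the ranks of its Sylow subgroups, together with the bound on the rank of each Sylow subgroup of $L(q)$ in terms of $\ell$ and $f$ (at $p$ through $|U|=p^{fN}$, and at the remaining primes through the normaliser-of-torus structure).

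The main obstacle is the module computation of part (b) in the twisted cases, where the ``simple root subgroups'' are defined over small extensions of $\F_q$ and $\al$ has to be combined correctly with the twisting automorphism; the rank of $[U/[U,U],A]$ nevertheless grows linearly in the twisted Lie rank, so the conclusion survives after routine bookkeeping. The only other point needing care is the final implication ``$\ell,f$ bounded $\Rightarrow\operatorname{rank}(G)$ bounded'', which is where the primes different from $p$ enter and where one must go beyond the unipotent subgroup.
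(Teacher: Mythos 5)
The paper offers no proof of this lemma at all --- it is quoted verbatim from \cite[Lemma~2.2]{tams} --- so there is nothing internal to compare against; your argument is correct and is essentially the standard one underlying that citation: reduce by the classification to $A=\langle\al\rangle$ cyclic with $G=L(q)$, $q=q_0^{e}$, $C_G(\al)=L(q_0)$, and read off both bounds from the $A$-invariant Sylow subgroup at the defining characteristic. Your part~(a) computation (coprime factorization $U=[U,A]\,C_U(A)$ with $|U|=q^{N}$, $|C_U(A)|=q_0^{N}$, giving $q_0^{(e-1)N}\le r$ and hence bounds on $q_0$, $e$, $N$ and $|G|$) and your part~(b) computation ($\ell(e-1)f_0\le r$, followed by the paper's Lemma~\ref{l-rlt} to pass from bounded Lie rank and bounded $f=ef_0$ to bounded Pr\"ufer rank uniformly in the characteristic) are both sound. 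On the twisted-case bookkeeping you flag: note that $e\ge 2$ forces $q=q_0^{e}\ge 4$, so the degenerate commutator relations for $q\le 3$ never arise, and the issue disappears entirely if you project $[U,A]$ onto the height-one quotient $U\big/\prod_{\operatorname{ht}\beta\ge 2}X_\beta$ rather than onto $U/[U,U]$ (this also absorbs the Suzuki and Ree families), since each $\al$-orbit of simple roots still contributes at least $(e-1)f_0$ to the rank. Two quirks worth recording: the lemma implicitly assumes $A\ne 1$, as you do --- with $A=1$ the hypothesis is vacuous and the conclusion false --- and your proof uses only the single $A$-invariant Sylow subgroup at the defining characteristic, so it in fact establishes a formally stronger statement than the one quoted.
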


\begin{lemma}[{\cite[Lemma~2.3]{tams}}]\label{l-1111} Let $G$ be a simple group admitting a group of coprime automorphisms $A$. There is a prime $p\in\pi(G)$ such that $G$ is generated by two $p$-subgroups $P_1$ and $P_2$ with the property that $P_1=[P_1,A]$ and $P_2=[P_2,A]$.
\end{lemma}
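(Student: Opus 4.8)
The plan is to use the classification reduction already recorded in the excerpt: since $G$ is simple and admits a nontrivial group of coprime automorphisms, $G=L(q)$ is a group of Lie type over $\F_q$ with $q=q_0^e$, and $A$ is cyclic, acting as a group of field automorphisms of order $e=|A|$ coprime to $|G|$; in particular $e\ge 2$ and $e$ is coprime to the defining characteristic $p$. I would take $p$ to be this defining characteristic. Fixing a standard Borel subgroup and its opposite, both stabilized by the field automorphisms, let $U$ and $U^-$ be the corresponding opposite maximal unipotent subgroups; these are $A$-invariant $p$-subgroups. I then set $P_1=[U,A]$ and $P_2=[U^-,A]$. Because $A$ acts coprimely, $[P_i,A]=P_i$ for $i=1,2$ (as $[[V,A],A]=[V,A]$ for any $A$-invariant $V$), and $P_i\neq 1$ since $C_U(A)=U\cap L(q_0)$ is a proper subgroup of $U$ when $e\ge 2$. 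Thus $P_1,P_2$ are $p$-subgroups of the required shape, and everything reduces to proving that $\langle P_1,P_2\rangle=G$.

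For the generation I would exploit the root-subgroup structure. Writing $X_\al$ for the root subgroup attached to $\al\in\Phi$, one has $G=\langle X_\al\mid \al\in\Phi\rangle=\langle L_\al\mid \al\in\Phi^+\rangle$, where $L_\al=\langle X_\al,X_{-\al}\rangle$ is a rank-one subgroup of Lie type (an $\mathrm{SL}_2$, $\mathrm{SU}_3$, Suzuki or Ree type group). The field automorphisms stabilize each standard $X_\al$, hence each $L_\al$ is $A$-invariant, and $[X_{\al},A]\le P_1$, $[X_{-\al},A]\le P_2$. Consequently it suffices to prove the rank-one statement that $\langle [X_\al,A],[X_{-\al},A]\rangle=L_\al$ for every $\al$: granting this, $\langle P_1,P_2\rangle\supseteq\langle L_\al\mid\al\in\Phi^+\rangle=G$, as desired.

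The heart of the argument — and the step I expect to be the main obstacle — is this rank-one claim, which is really a statement about how $A$ moves a single root subgroup. In the generic case $X_\al\cong(\F_q,+)$ and the field automorphism acts as the Frobenius $x\mapsto x^{q_0}$, so $[X_\al,A]$ is the image of $x\mapsto x^{q_0}-x$, i.e. the additive subgroup $\ker\bigl(\mathrm{Tr}_{\F_q/\F_{q_0}}\bigr)$. I would then check that the subfield $\F_{q'}$ of $\F_q$ generated by these trace-zero elements is all of $\F_q$: one has $\F_{q_0}\subseteq\F_{q'}$ and $\dim_{\F_{q_0}}\F_{q'}\ge e-1$, while $[\F_{q'}:\F_{q_0}]$ divides $e$, which forces $\F_{q'}=\F_q$ when $e\ge 3$; for $e=2$ coprimality makes $p$ odd, which again gives $\F_{q'}=\F_q$. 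Once $[X_\al,A]$ generates the full field, the standard Chevalley commutator and Steinberg relations show that $\langle [X_\al,A],[X_{-\al},A]\rangle$ already contains the whole rank-one group $L_\al$.

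It is precisely here that coprimality is indispensable: over $\F_4/\F_2$, for instance, the trace-zero elements form only $\F_2$ and the corresponding subgroup would generate $\mathrm{SL}_2(2)\neq\mathrm{SL}_2(4)$, but this configuration has $|A|=2$ dividing $|G|$ and so is excluded. The remaining work is to carry the same computation through the non-generic root subgroups — the short-root and non-abelian root groups occurring in the twisted, Suzuki and Ree families — where $X_\al$ is a module or a group of nilpotency class two over $\F_q$ or $\F_{q^2}$; in each case $A$ still acts as a field Frobenius on the relevant sections, the analogous trace-type computation applies, and coprimality again rules out the small exceptional fields. Assembling these cases yields the rank-one claim uniformly and hence the lemma.
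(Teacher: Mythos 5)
The paper does not actually prove this lemma: it is quoted from \cite[Lemma~2.3]{tams}, with only the remark that a coprime automorphism group of a simple group is automatically cyclic (generated by a field automorphism), so one may reduce to the cyclic case. Your skeleton reconstructs the intended argument correctly: $G=L(q)$ of Lie type with $q=q_0^e$, $A=\langle\varphi\rangle$ acting as field automorphisms, $p$ the defining characteristic, $P_1=[U,A]$ and $P_2=[U^-,A]$ for opposite $A$-invariant unipotent radicals, $P_i=[P_i,A]$ by coprimality, reduction to rank-one subgroups $L_\alpha$, and the identification $[X_\alpha,A]=x(\ker\mathrm{Tr}_{\F_q/\F_{q_0}})$ (one should also check that the commutators with higher powers $\varphi^k$ land in this same kernel, which they do since $\ker\mathrm{Tr}_{\F_q/\F_{q_0^{d}}}\subseteq\ker\mathrm{Tr}_{\F_q/\F_{q_0}}$). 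One simplification you missed: $|G|$ is even, so coprimality forces $e$ to be odd, hence $e\geq 3$ always; your separate treatment of $e=2$ is vacuous, and the field count $\dim_{\F_{q_0}}\ker\mathrm{Tr}=e-1>e/2$ becomes uniform.

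The genuine gap is the closing step of your rank-one claim. Knowing that $K=\ker\mathrm{Tr}$ generates $\F_q$ as a field does not allow you to invoke ``standard Chevalley commutator and Steinberg relations'': producing torus or Weyl elements requires expressions like $x(t)\,y(-t^{-1})\,x(t)$, and $K$ is not closed under $t\mapsto -t^{-1}$, so from $x(K)$ and $y(K)$ alone no torus element can be written down; moreover, field generation by the entries is not the right invariant, since $\langle x(a),y(b)\rangle$ is generically $SL_2$ over $\F_p(ab)$, controlled by the product, not by the fields generated by $a$ and $b$ separately. What actually closes the argument is a subgroup-classification input: by Dickson's theorem the image of $H=\langle x(K),y(K)\rangle$ in $PSL_2(q)$ is either inside a Borel (impossible, as $H$ contains unipotents from two distinct Sylow $p$-subgroups), or a subfield-type subgroup $PSL_2(q')$ or $PGL_2(q'')$ with $q'\leq q^{1/2}$, $(q'')^2\leq q$ when proper, or one of the small exceptional subgroups $A_4$, $S_4$, $A_5$; since $H$ contains the elementary abelian $p$-group $x(K)$ of order $q_0^{\,e-1}>q^{1/2}$ (here $e\geq 3$ is used), all proper possibilities are excluded apart from finitely many tiny configurations such as $q=8$, which one checks directly. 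Your counting argument supplies exactly the needed inequality, so the gap is fixable by standard means --- but the relations-based route as written would fail. Finally, the twisted rank-one groups ($SU_3$, Suzuki, Ree), which you explicitly defer, each require the corresponding subgroup classification (Mitchell--Hartley, Suzuki, Kleidman) rather than ``the analogous trace-type computation''; this is routine but constitutes a real portion of the proof that your sketch leaves open.
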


We call a direct product of simple groups a \emph{semisimple group}. The next lemma is proved similarly to \cite[Lemma~2.4]{tams}.

\begin{lemma}\label{01} Let $C$ be a positive integer and $G$ a finite group admitting a group of coprime automorphisms $A$ such that $G=[G,A]$. Suppose that the order of $[P,A]$ is at most $C$ whenever $P$ is an $A$-invariant Sylow subgroup of $G$. Then the order of $G$ is $C$-bounded.
\end{lemma}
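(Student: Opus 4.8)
The plan is to argue by induction on $|G|$. Let $N$ be a minimal $A$-invariant normal subgroup of $G$; equivalently, $N$ is a chief factor of the semidirect product $GA$ lying inside $G$, so $N$ is $GA$-invariant and is either elementary abelian or a direct product of $GA$-conjugate non-abelian simple groups, with $GA$ permuting the factors transitively. Passing to $\bar G=G/N$, the hypotheses are inherited: $\bar G=[\bar G,A]$ by Lemma~\ref{l-IGA}(a), and for an $A$-invariant Sylow $p$-subgroup $P$ of $G$ the image $PN/N$ is an $A$-invariant Sylow $p$-subgroup of $\bar G$ with $[PN/N,A]=[P,A]N/N$ of order at most $C$. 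By induction $|\bar G|$ is $C$-bounded, so it remains to bound $|N|$.

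The central device is an \emph{inflation principle}: whenever $A$ permutes a system of subgroups of $N$ (the simple direct factors, or later the homogeneous $\bar G$-components) and a non-trivial $A$-orbit on this system has size $s$, then choosing an $A$-invariant Sylow $\ell$-subgroup $R$ of $N$ inside an $A$-invariant Sylow $\ell$-subgroup $P$ of $G$, moving the members of the orbit forces $[R,A]\le[P,A]$ to have order at least $2^{s-1}$; since $|[P,A]|\le C$, every such orbit has $C$-bounded size. I would first treat the semisimple case $N=S_1\times\cdots\times S_m$. Here $G$ acts on $\{S_1,\dots,S_m\}$ through $\bar G=G/N$, since $N$ normalizes each $S_i$, so the $G$-orbits have size at most $|\bar G|$; moreover $A$ permutes these $G$-orbits, transitively because $GA$ is transitive, so by the inflation principle the number of $G$-orbits is $C$-bounded, and therefore $m$ is $C$-bounded. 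Finally, for each $i$ the stabilizer $\mathrm{Stab}_A(S_i)$ induces coprime automorphisms of the simple group $S_i$, and any invariant Sylow subgroup $Q$ of $S_i$ lies in some $P$ with $|[Q,\mathrm{Stab}_A(S_i)]|\le|[P,A]|\le C$; Lemma~\ref{l-0000}(a) then bounds $|S_i|$. Hence $|N|=\prod|S_i|$ is $C$-bounded.

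For $N$ elementary abelian of exponent $p$ there are two subcases. If $A$ acts non-trivially, then $[N,A]\ne1$ is a non-zero $\F_p$-space inside $[P,A]$, so $p\le|[N,A]|\le C$; decomposing $N$ into $\bar G$-homogeneous components, which $A$ permutes transitively in $C$-boundedly many orbits by the inflation principle, and invoking a coprime Clifford-theory argument — $GA$-irreducibility forces each multiplicity space to be irreducible for the relevant stabilizer in $A$, so that an $A$-fixed multiplicity cannot grow without either reducing the module or inflating some $[P,A]$ — one sees that each component has $C$-bounded $\F_p$-dimension, whence $|N|$ is $C$-bounded. If instead $A$ centralizes $N$, then $N\le C_G(A)$, so Lemma~\ref{l-ker}(a) together with $G=[G,A]$ gives $N\le Z(G)$, and $GA$-irreducibility forces $|N|=p$. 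Because $G=[G,A]$ yields $G^{\mathrm{ab}}=[G^{\mathrm{ab}},A]$ and coprime action on an abelian group splits off its fixed points, we have $C_{G^{\mathrm{ab}}}(A)=1$; hence the image of $C_G(A)$ in $G^{\mathrm{ab}}$ is trivial, so $N\le C_G(A)\le G'$. Thus $N\le Z(G)\cap G'$, the extension $1\to N\to G\to\bar G\to1$ is a stem extension, and $p=|N|$ divides the order of the Schur multiplier of the $C$-bounded group $\bar G$, which is therefore $C$-bounded.

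The step I expect to be the main obstacle is ensuring that the inflation principle and the multiplicity analysis deliver bounds depending \emph{only} on $C$ and not on $|A|$: one must check carefully, through coprime Clifford theory, that a large $A$-fixed multiplicity in an abelian chief factor is impossible without contradicting either $GA$-irreducibility or the bound $|[P,A]|\le C$. The second delicate point is the $A$-centralized chief factor, where an unbounded central $p$-part is ruled out precisely by the chain $N\le Z(G)\cap G'$ and the Schur-multiplier bound; here the identity $C_{G^{\mathrm{ab}}}(A)=1$, forced by the hypothesis $G=[G,A]$, is what drives $N$ into $G'$ and makes the argument work.
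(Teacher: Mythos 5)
You have found the right skeleton (minimal $A$-invariant normal subgroup $N$, quotient by induction, case split on $N$ abelian versus semisimple, Schur-multiplier control of central factors), but there is a genuine logical gap: your induction on $|G|$ does not close. At each step you bound $|G|\leq |N|\cdot|\bar G|$, where your bound on $|N|$ itself depends on the inductively assumed bound for $|\bar G|$ (your $G$-orbit sizes are bounded by $|\bar G|$, the dimension of an irreducible constituent by a function of $|\bar G|$, and in the $A$-centralized case $|N|$ divides the order of the Schur multiplier of $\bar G$). So if $f(C)$ is the putative bound supplied by the inductive hypothesis, what you derive for $G$ is a strictly larger quantity $g\bigl(f(C),C\bigr)>f(C)$: the loss compounds once per $A$-chief factor, and the length of an $A$-chief series of $G$ is of order $\log|G|$ --- precisely the quantity you are trying to bound. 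The net result is a bound depending on the chief length, i.e.\ on $|G|$, which is vacuous. This is exactly the trap the paper's proof is built to avoid: it inducts not on $|G|$ but on the parameter $s(G)=\prod_i |[P_i,A]|$ taken over $A$-invariant Sylow subgroups, which is itself $C$-bounded and strictly decreases when a minimal $A$-invariant normal subgroup $M$ satisfies $[M,A]\neq 1$; hence the recursion depth is $C$-bounded and the compounded losses stay $C$-bounded. The chief factors centralized by $A$ --- the other source of unbounded depth in your scheme --- are not peeled off one at a time: the paper takes $N$ to be the product of \emph{all} normal subgroups contained in $C_G(A)$ in a single final step, notes $N\leq Z(G)$ by Lemma~\ref{l-ker}(a) since $G=[G,A]$, bounds $|G/N|$ by the previously settled case, and finishes with Schur's theorem (bounding $|G'|$) together with the abelian case applied to $G/G'$.

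A second, more local gap is the one you flag yourself: the ``coprime Clifford-theory argument'' for an abelian chief factor with $[N,A]\neq 1$ is not a proof. The stabilizer in $A$ of a homogeneous component may centralize that component outright, in which case no commutator landing in $[P,A]$ detects the multiplicity, and irreducibility of multiplicity spaces for stabilizers over $\F_p$ (a non-splitting field, with projective-representation complications) is asserted rather than established. The paper needs none of this machinery: since $[M,A]\leq [P,A]$ has order at most $C$, $[M,A]$ is normal in $M$, and $M$ has $C$-bounded index in $G$ (by the induction on $s$), the normal closure $\langle [M,A]^G\rangle$ is a product of at most $|G:M|$ conjugates and so has $C$-bounded order; minimality of $M$ forces $\langle [M,A]^G\rangle=M$, bounding $|M|$ uniformly in the abelian and semisimple cases with no representation theory at all. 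Your treatment of the $A$-centralized factor (deriving $C_{G/G'}(A)=1$ from $G=[G,A]$, hence $N\leq Z(G)\cap G'$, a stem extension, and a Schur-multiplier bound) is correct and is a nice refinement of the paper's cruder use of Schur's theorem, but inside your induction it only feeds the compounding problem rather than repairing it.
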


\begin{proof} If $G$ is abelian, then for every prime $p$ the unique Sylow $p$-subgroup satisfies $P=[P,A]$ and $p\leqslant |P|\leqslant C$. It follows that $|G|\leqslant C^f$, where $f$ is the number of primes less than or equal to $C$. So we assume that $G$ is nonabelian.

First suppose that $G$ has no proper $A$-invariant normal subgroups. Being non-abelian, then $G=G_1\times\dots \times G_l$ is a direct product of isomorphic simple groups transitively permuted by $A$. If $l=1$, then the result immediately follows from Lemma~\ref{l-0000}(a). If $l>1$, then for any prime $q\in \pi (G)$ an $A$-invariant Sylow $q$-subgroup $Q$ is a product $Q_1\times\dots\times Q_l$ of Sylow subgroups of the $G_i$ and $A$ transitively permutes the factors $Q_i$. We observe that $C\geqslant |[Q,A]|\geqslant|Q_1|^{l-1}$ and the result follows since any prime dividing $|G|$ is at most $C$, while $|G|= \prod_{q\in \pi (G)}|Q_1|^l$, with $|Q_1|^l\leqslant |Q_1|^{2l-2}\leqslant C^2$.

 So suppose that $G$ has proper $A$-invariant normal subgroups. Let $\pi(G)=\{p_1,\dots,p_k\}$ and for each $i\leqslant k$ choose an $A$-invariant Sylow $p_i$-subgroup $P_i$ of $G$. We set $s(G)=\prod_{1\leqslant i\leqslant k}|[P_i,A]|$. Note that the number of factors greater than~1 is $C$-bounded, since for such factors $p_i\leqslant |[P_i,A]|\leqslant C$. Hence, $s(G)$ is $C$-bounded and we can use induction on $s(G)$. Suppose first that $A$ acts nontrivially on every $A$-invariant normal subgroup of $G$. Let $M$ be a minimal $A$-invariant normal subgroup. Since $[M,A]\ne 1$, we have $s(G/M)<s(G)$. By induction the order of $G/M$ is $C$-bounded. The subgroup $M$ is either an elementary abelian $p$-group for some prime $p\leqslant C$ or a semisimple group. In any case, $[M,A]$ has $C$-bounded order. Since $[M,A]$ is normal in $M$ and $M$ has $C$-bounded index in $G$, the normal closure $\langle[M,A]^G\rangle$ has $C$-bounded order. By the minimality of $M$ we have $\langle[M,A]^G\rangle=M$, and so the order of $G$ is $C$-bounded. This completes the proof in the particular case where $A$ acts nontrivially on every $A$-invariant normal subgroup of $G$.

Next, suppose that $G$ has nontrivial normal subgroups contained in $C_G(A)$. Let $N$ be the product of all such subgroups. In view of the above, $G/N$ has $C$-bounded order. Since $N\leqslant Z(G)$ by Lemma~\ref{l-ker}(a), we deduce from Schur's theorem \cite[Theorem 4.12]{rob} that $G'$ has $C$-bounded order. Hence the result.
\end{proof}

In the proof of the next lemma, which is similar to \cite[Lemma~2.5]{tams}, we shall use the following theorem of
B.~Hartley and I.~M.~Isaacs \cite{HI}.

\begin{theorem}[{\cite[Theorem~B]{HI}}]\label{t-HI}
Let $A$ be an arbitrary finite group. Then there exists a
number $\varepsilon=\varepsilon(A)> 0$, depending only on $A$, with the following property.
Let $A$ act by coprime automorphisms on a soluble finite group $H$, and let $k$ be any
field with characteristic not dividing $|A|$. Let $V$ be any simple $kHA$-module
and let $S$ be any simple $kA$-module which appears as a component of the restriction $V|_A$. Then the multiplicity of $S$ in $V|_A$ is at least $\varepsilon \dim V$.
\end{theorem}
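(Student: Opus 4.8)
The plan is to prove the statement by induction on the order of the semidirect product $G=HA$, viewed internally with $H\trianglelefteq G$ and $A$ a complement. Throughout, the hypothesis $\mathrm{char}\,k\nmid|A|$ is used via Maschke's theorem: $kA$ is semisimple, so $V|_A$ is an honest direct sum of simple $kA$-modules, ``multiplicity'' is unambiguous, and Frobenius reciprocity relates multiplicities across restriction and induction exactly. One useful reformulation to keep in mind is that the multiplicity of $S$ in $V|_A$ equals $\tfrac1{|A|}\sum_{a\in A}\chi_V(a)\overline{\chi_S(a)}$ (Brauer characters being legitimate here since $A$ is a $p'$-group when $\mathrm{char}\,k=p$), so the theorem is morally a statement that $\chi_V(a)$ is suitably controlled for $a\neq1$; but I expect the workable proof to be structural rather than numerical. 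The constant will be produced in the form $\e(A)=\min_{B\le A}\e(B)/|A|^{c}$ for a universal power $c$, so that it depends only on the isomorphism type of $A$; making this dependence genuinely free of $H$ and $V$ is the entire point.

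First I would make the reductions driven by a minimal normal subgroup. Since $H\neq1$ (the case $H=1$ is trivial), choose a minimal normal subgroup $M$ of $G$ contained in $H$. As $H$ is soluble, $M$ is an elementary abelian $p$-group, and since $M\le H$ with $(|H|,|A|)=1$ we have $p\nmid|A|$, so $A$ acts coprimely on $M$ and on its dual. If $M$ acts trivially on $V$, then $V$ is inflated from a simple $k[G/M]$-module; here $G/M=(H/M)\bar A$ with $H/M$ soluble, $\bar A=AM/M\cong A$ (coprimeness gives $M\cap A=1$) still coprime, and $|(H/M)A|<|HA|$. Since $V|_A$ is identified with $V|_{\bar A}$ under $A\cong\bar A$, induction applied with the \emph{same} group $A$ yields the bound with the same $\e(A)$, disposing of this case.

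Otherwise $M$ acts nontrivially, and this Clifford case is the substance of the argument. By Clifford's theorem $V|_M=e\bigoplus_{i=1}^{t}W_i$, where the $W_i$ are the distinct $G$-conjugates of a nontrivial simple $kM$-module $W=W_1$; let $T=\mathrm{Stab}_G(W)$ be the inertia group, so $M\le T$ and $V\cong\mathrm{Ind}_T^G\hat U$ with $\dim V=[G:T]\dim\hat U$. Since $A\le G$ permutes the orbit $\Omega=\{W_1,\dots,W_t\}$ (equivalently acts on $G/T$), Mackey's decomposition gives
\[
V|_A\;\cong\;\bigoplus_{d}\ \mathrm{Ind}_{A\cap{}^{d}T}^{A}\big(({}^{d}\hat U)|_{A\cap{}^{d}T}\big),
\]
the sum over representatives $d$ of the $A$-orbits on $\Omega$. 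When $T<G$ we have $|{}^{d}T|<|G|$, and $A\cap{}^{d}T$ is a coprime operator group on the soluble subgroup $H\cap{}^{d}T$ with $|(H\cap{}^{d}T)(A\cap{}^{d}T)|\le|{}^{d}T|<|G|$, so the inductive hypothesis controls each $({}^{d}\hat U)|_{A\cap{}^{d}T}$; note that the operator group may shrink from $A$ to a proper subgroup $A\cap{}^{d}T$, which is exactly why $\e(A)$ must be defined as a minimum over subgroups of $A$. The degenerate possibility $T=G$ (where no reduction in $|G|$ occurs) forces, after extension to a splitting field so that $W$ becomes linear, $\ker W$ to be a proper $G$-invariant subgroup of $M$; minimality then makes $M$ central of prime order acting by a scalar, and this is cleared by a standard central-character reduction, not where the difficulty lies.

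The main obstacle is precisely the uniformity in Case B: extracting from the Mackey sum that \emph{every} simple $kA$-module $S$ occurring in $V|_A$ occurs with multiplicity at least $\e(A)\dim V$, with $\e(A)$ depending on $A$ alone. By Frobenius reciprocity the multiplicity of $S$ in a term $\mathrm{Ind}_{A\cap{}^{d}T}^{A}(X)$ equals the multiplicity of $X$-constituents in $S|_{A\cap{}^{d}T}$, and since $kA$ is semisimple with $[A:A\cap{}^{d}T]\le|A|$, the induction–restriction bookkeeping distorts multiplicities only by factors bounded purely in terms of $|A|$. Two points must be nailed down: (i) the number of $A$-orbits on $\Omega$ and the indices $[A:A\cap{}^{d}T]$ are bounded in terms of $A$ only — this is exactly where the coprime action of $A$ on $M$ and on $\Omega$ enters, via the coprime fixed-point facts of Lemma~\ref{l-20} together with the elementary bound that $A$-orbit sizes divide $|A|$; and (ii) one must ensure that $S$ is not merely diluted across many tiny terms, i.e.\ that a dominant Mackey term carries the full proportional multiplicity. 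Point (ii) is arranged by applying the inductive bound $\e(A\cap{}^{d}T)$ inside the dominant term (whose dimension is a bounded fraction of $\dim V$) and then choosing $\e(A)$ to absorb the universal $1/|A|^{c}$ factor from the induction–restriction estimates. Combining (i) and (ii) yields the uniform constant $\e(A)$ and closes the induction.
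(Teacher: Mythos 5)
You should first note that the paper contains no proof of this statement to compare against: it is quoted verbatim as Theorem~B of Hartley and Isaacs \cite{HI} and used as a black box, so your attempt can only be measured against the argument in \cite{HI}, which is a substantially more involved induction than your sketch. And your sketch has a fatal gap at exactly the point you identify as the main obstacle. Your claim (i) is false: what the coprime action bounds is the \emph{size} of each $A$-orbit on $\Omega=\{W_1,\dots,W_t\}$ (at most $|A|$), not the \emph{number} of orbits, which is at least $t/|A|$ and is unbounded as $t=[G:T]$ grows. Consequently every Mackey term has dimension at most $|A|\dim\hat U=(|A|/t)\dim V$, an arbitrarily small fraction of $\dim V$, and there is no ``dominant Mackey term whose dimension is a bounded fraction of $\dim V$''. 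Applying the inductive bound inside any single term therefore yields a multiplicity proportional only to the dimension of that term, not to $\dim V$, and the dilution scenario you flag is not excluded: a constituent $S$ could a priori occur in just one of the roughly $t/|A|$ tiny terms, and Frobenius-reciprocity bookkeeping with $|A|$-bounded distortion factors cannot rule this out. The actual content of the theorem is precisely that every constituent which occurs at all must occur across a positive proportion of the decomposition, and proving this is where solubility of $H$ and coprimality do real work (for instance, via Glauberman's lemma one can pick an $A$-fixed homogeneous component and use the transitivity of $C_H(A)$ on the $A$-fixed components to compare the contributions of different orbits); none of this appears in your outline.

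Two further steps you dismiss as routine are not. First, the homogeneous case $T=G$ is not ``a standard central-character reduction'': when the minimal normal subgroup $M$ acts by scalars, $V$ is not inflated from $G/M$ but becomes a module over a twisted group algebra (a projective representation) of $G/M$, which lies outside your inductive statement about honest simple $kH'A'$-modules, so the induction does not close there; moreover, passing to a splitting field changes the simple $kA$-modules and their multiplicities, which itself requires justification since $k$ is an arbitrary field of characteristic prime to $|A|$. In \cite{HI} this quasi-primitive configuration is a major part of the proof, handled by genuine character estimates rather than a reduction. Second, a subgroup ${}^{d}T$ of $G=HA$ need not factor as $(H\cap{}^{d}T)(A\cap{}^{d}T)$, and since $\mathrm{char}\,k$ may well divide $|H|$, the restriction of ${}^{d}\hat U$ to such a product subgroup need not be semisimple or simple; thus the modules appearing in your Mackey terms are not of the shape covered by your inductive hypothesis as stated. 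A correct argument (as in \cite{HI}) runs the induction along $A$-invariant normal sections and treats the imprimitive and quasi-primitive cases by separate, substantive arguments, rather than by raw Mackey bookkeeping.
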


\begin{lemma}\label{l-02} Let $G=HA$ be a group with a normal subgroup $H$ and a subgroup $A$ such that $(|H|,|A|)=1$ and $H=[H,A]$. Suppose that $G$ faithfully acts by permutations on a set $\Omega$ in such a way that the subgroup $A$ moves only $m$ points. Then the order of $G$ is $m$-bounded.
\end{lemma}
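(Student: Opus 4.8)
The plan is to reduce to a single transitive orbit and then to bound the \emph{degree} $n=|\Omega|$ of the action, since a faithful transitive action of degree $n$ forces $|G|\le n!$, which is $m$-bounded as soon as $n$ is.

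First I would control $|A|$. As $G$ acts faithfully and $A$ fixes every point outside its support $\Delta$, with $|\Delta|\le m$, the restriction $A\to\operatorname{Sym}(\Delta)$ is injective, so $|A|\le m!$; this is essential, because the constant furnished by Theorem~\ref{t-HI} a priori depends on $A$, and I need it to depend only on $m$. Next I would pass to a transitive action. Decompose $\Omega$ into $G$-orbits $\Omega_i$ with kernels $K_i=\ker(G\to\operatorname{Sym}(\Omega_i))$. If $A$ acts trivially on $\Omega_i$, then $A\le K_i$, and since $K_i\trianglelefteq G$ the image of each commutator $[g,a]$ in $\operatorname{Sym}(\Omega_i)$ is trivial; hence $[G,A]\le K_i$, and in particular $H=[H,A]$, so all of $G=HA$, acts trivially on $\Omega_i$. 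Discarding such orbits, $A$ moves a point on each of the $\le m$ remaining orbits; since $\bigcap_iK_i=1$, the group $G$ embeds into the product of the quotients $G/K_i$, each of which inherits the hypotheses ($\overline{H}=[\overline{H},\overline{A}]$, coprimality, faithfulness, $\overline{A}$ moving $\le m$ points). Thus it suffices to treat a single orbit, and I may assume $G$ acts faithfully and transitively of degree $n$, the goal being to show $n$ is $m$-bounded.

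For the transitive case I would argue in the permutation module $V=\mathbb{C}\Omega$. Transitivity gives that the trivial $\mathbb{C}HA$-module occurs in $V$ with multiplicity one, while $\dim V-\dim V^{A}$ is at most the number of points moved by $A$, hence $\dim V-\dim V^{A}\le m$. Write $V=\bigoplus_j V_j^{\oplus a_j}$ as a sum of simple $\mathbb{C}HA$-modules. If $A$ acts trivially on some $V_j$, then exactly as in the orbit reduction $H=[H,A]$ lies in the kernel of the action on $V_j$, so $V_j$ is the trivial module; hence every constituent on which $A$ acts nontrivially is a nontrivial $\mathbb{C}HA$-module. For such a $V_j$ the restriction $V_j|_A$ contains a nontrivial simple $\mathbb{C}A$-module $S$, whose multiplicity is at least $\e(A)\dim V_j$ by Theorem~\ref{t-HI}; as the $S$-isotypic part is disjoint from $V_j^{A}$, this yields $\dim V_j^{A}\le(1-\e(A))\dim V_j$. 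Summing over the nontrivial constituents,
\[
\e(A)\,(n-1)\;\le\;\sum_{V_j\ \text{nontrivial}}a_j\bigl(\dim V_j-\dim V_j^{A}\bigr)\;=\;\dim V-\dim V^{A}\;\le\;m,
\]
so $n\le 1+m/\e(A)$. Since $|A|\le m!$, the quantity $\e(A)$ is bounded below by a positive number depending only on $m$; hence $n$, and therefore $|G|\le n!$, is $m$-bounded.

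The main obstacle is that Theorem~\ref{t-HI} requires $H$ to be soluble, whereas the hypotheses permit $H=[H,A]$ to be non-soluble (for instance $H=\mathrm{PSL}_2(32)$ with $A$ a field automorphism of order $5$). The soluble case is handled verbatim by Hartley--Isaacs as above; the crux is to secure, for \emph{every} nontrivial constituent $V_j$, a uniform lower bound $\e$ on the fraction of $V_j$ lying outside $V_j^{A}$ even when $H$ is non-soluble. I expect this to require the classification: one either first reduces to soluble $H$ by disposing of the non-abelian composition factors through the coprime simple-group estimates (Lemmas~\ref{l-0000}, \ref{l-1111}, \ref{01}), or replaces the Hartley--Isaacs input on the semisimple constituents by classification-based fixed-point-ratio bounds guaranteeing the same $\e$-fraction. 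Verifying that these uniform estimates hold, with $\e$ depending only on $m$, is the delicate step.
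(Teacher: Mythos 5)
Your handling of the soluble case is correct and, in fact, matches the paper's proof almost step for step: the observation $|A|\le m!$, the discarding of orbits on which $A$ is trivial (via $H=[H,A]$ lying in the kernel), the embedding of $G$ into the product of its images on the at most $m/2$ nontrivial orbits, and the Hartley--Isaacs count in the complex permutation module $V=\mathbb{C}\Omega$ are all exactly the paper's steps; your explicit inequality $n\le 1+m/\e(A)$ just makes the paper's terser dimension count quantitative, and your point that $\e(A)$ is bounded below in terms of $m$ because $|A|\le m!$ is the same as the paper's. But as written the proof is incomplete: for non-soluble $H$ you only sketch two possible strategies and explicitly defer ``the delicate step,'' so the hypothesis of Theorem~\ref{t-HI} that $H$ be soluble is never actually secured, and your example $H=\mathrm{PSL}_2(32)$ shows the hypotheses of the lemma do not exclude this case.

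The missing idea is the paper's very first move, and it is short -- it is the first of the two routes you gesture at, carried out via Lemma~\ref{01} alone (no fixed-point-ratio bounds on simple constituents are needed). For each prime $p\in\pi(H)$ choose an $A$-invariant Sylow $p$-subgroup $P$ of $H$, which exists by Lemma~\ref{l-20}(a); by coprime action $[[P,A],A]=[P,A]$, so the subgroup $[P,A]A\le G$ satisfies all the hypotheses of the lemma with the $p$-group $[P,A]$ in place of $H$ (it acts faithfully on $\Omega$, and $A$ still moves at most $m$ points). Your soluble argument, applied verbatim to this configuration, bounds $|[P,A]|$ in terms of $m$ for every $A$-invariant Sylow subgroup $P$, and then Lemma~\ref{01} (whose classification input, via Lemma~\ref{l-0000}, is already proved in the paper) converts these bounds into an $m$-bound on $|H|$; together with $|A|\le m!$ this gives the $m$-bound on $|G|$. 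Equivalently, and as the paper phrases it, Lemma~\ref{01} lets one assume from the outset that $H$ is a $p$-group, after which Theorem~\ref{t-HI} applies without any solubility worry; note this reduction must precede the module argument, since Hartley--Isaacs is invoked constituent by constituent for the \emph{same} group $H$.
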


\begin{proof} First, note that the order of $A$ is obviously $m$-bounded.
Another useful observation is that because of Lemma \ref{01} we can assume without loss of generality that $H$ is a $p$-group for some prime $p$.

Let $\Omega_0$ be a nontrivial $G$-orbit. If $A$ moves no points in $\Omega_0$, then $H=[H,A]$ also acts trivially on $\Omega_0$, a contradiction. Therefore, $A$ moves at least $2$ points on every nontrivial $G$-orbit and so there are at most $m/2$ nontrivial orbits of $G$ in $\Omega$. Since $G$ embeds into the direct product $G_1\times \cdots \times G_k$, where $G_i$ is the image of $G$ in its action on the $i$th nontrivial $G$-orbit, and $k\leqslant m/2$, it is sufficient to obtain a bound in terms of $m$ for each $|G_i|$. Each $G_i$ satisfies the hypotheses of the lemma with $G_i=H_iA_i$ in place of $G=HA$, where $H_i$ and $A_i$ are the images of $H$ and $A$. Thus, we can assume without loss of generality that $G$ acts transitively on $\Omega$ and hence it is sufficient to bound the cardinality of $\Omega$ in terms of $m$.

Consider the corresponding permutational representation of $G$ over $\mathbb{C}$, where $G$ naturally acts on the $|\Omega|$-dimensional linear space $V$. The dimension of $[V,A]$ is at most $m-1$. The space $V$ is a direct sum of irreducible $\mathbb{C}G$-modules and there are at most $m$ nontrivial irreducible $\mathbb{C}A$-submodules. If $A$ acts trivially on a $\mathbb{C}G$-module, then $H=[H,A]$ also acts trivially, so this $\mathbb{C}G$-module is the 1-dimensional fixed-point subspace of $G$.  On every other irreducible $G$-module, $A$ acts nontrivially, and hence this $G$-module has $(m,|A|)$-bounded dimension by the Hartley--Isaacs Theorem~\ref{t-HI}, which is applicable since $H$ is a $p$-group. Since $|A|$ is $m$-bounded, it follows that the dimension of $V$ is $m$-bounded, as required.
\end{proof}

\begin{lemma}\label{l-autsem}
If $A$ is a group of coprime automorphisms of a semisimple group $H$ of rank~$r$, then $A$ has $r$-bounded order.
\end{lemma}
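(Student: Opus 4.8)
The plan is to reduce to a single simple factor and then control the order of coprime automorphisms of a simple group of Lie type through its field of definition. First I would write $H=S_1\times\dots\times S_n$ as a direct product of non-abelian simple groups; these are exactly the minimal normal subgroups of $H$, so any automorphism permutes the set $\{S_1,\dots,S_n\}$. This gives a homomorphism from $A$ onto a subgroup of the symmetric group $\operatorname{Sym}(n)$, whose kernel $A_0$ stabilizes each $S_i$ and induces an automorphism group on it. The number $n$ of factors satisfies $n\le r$: choosing an involution $t_i$ in each (necessarily even-order) factor produces an elementary abelian subgroup $\langle t_1\rangle\times\dots\times\langle t_n\rangle$ of rank $n$ inside $H$, so $n\le\operatorname{rank}(H)=r$. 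Consequently $|A/A_0|\le n!\le r!$, and it remains to bound $|A_0|$.

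The subgroup $A_0$ embeds, via its componentwise action, into $\prod_{i=1}^{n}\operatorname{Aut}(S_i)$, and on each $S_i$ it induces a group of coprime automorphisms, since $|A_0|$ divides $|A|$, which is coprime to $|H|$. By the classification such a group is cyclic, so $A_0$ is a subdirect product of cyclic groups of orders $e_1,\dots,e_n$; hence $|A_0|\le\prod_{i=1}^{n}e_i$, and it suffices to bound each $e_i$ in terms of $r$.

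To bound $e_i$ I would use the classification fact recalled before Lemma~\ref{l-0000}. Fix $i$ with $e_i>1$ (otherwise $e_i=1\le r$ trivially); then $S_i$ admits a nontrivial coprime automorphism, so $S_i=L(q_i)$ is of Lie type with $q_i=p_i^{f_i}$, and a generator $\alpha$ of the induced cyclic group of order $e_i$ is a field automorphism with $C_{S_i}(\alpha)=L(q_0)$ where $q_i=q_0^{e_i}$; comparing $p_i$-powers gives $e_i\mid f_i$, so $e_i\le f_i$. The crucial remaining point is that $f_i\le\operatorname{rank}(S_i)\le r$. I would establish this by observing that a Sylow $p_i$-subgroup of $L(q_i)$ contains an elementary abelian subgroup of rank at least $f_i$: for the untwisted types a root subgroup is isomorphic to the additive group $(\mathbb{F}_{q_i},+)\cong C_{p_i}^{f_i}$, while for the twisted and the Suzuki--Ree types the center of a Sylow $p_i$-subgroup still contains a copy of $(\mathbb{F}_{q_i},+)$. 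Since subgroups do not increase rank, $\operatorname{rank}(S_i)\le\operatorname{rank}(H)=r$, whence $e_i\le f_i\le r$.

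Putting the pieces together, $|A|=|A_0|\cdot|A/A_0|\le r^{\,n}\cdot n!\le r^{\,r}\cdot r!$, which is $r$-bounded, as required. I expect the only genuinely technical step to be the uniform verification that $f_i\le\operatorname{rank}(S_i)$ across all (twisted, untwisted, and Suzuki--Ree) Lie types; this is standard, resting on the fact that each such group contains a root-type subgroup isomorphic to the additive group of its defining field. Everything else is the reduction to simple factors and the ``coprime automorphism $=$ field automorphism'' dictionary already available in the excerpt.
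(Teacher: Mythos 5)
Your proof is correct and takes essentially the same route as the paper's: decompose $H$ into simple factors, bound the number of factors by $r$ via involutions (Feit--Thompson), pass to the factor-stabilizing subgroup of $r$-bounded index, and use the classification dictionary to identify each induced coprime automorphism group as a cyclic group of field automorphisms whose order is bounded by the rank, because a group of Lie type over $\F_{p^f}$ contains an elementary abelian $p$-subgroup of rank at least $f$. The only differences are cosmetic: you make the numerical bound $|A|\le r^{r}\,r!$ explicit and spell out the root-subgroup verification across twisted and Suzuki--Ree types, which the paper asserts in one line.
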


\begin{proof}
 Let $H=S_1\times\dots\times S_l$ be a direct product of simple factors $S_i$. Since every $S_i$ contains an involution, we clearly have $l\leqslant r$. Since $A$ permutes these factors, there is a subgroup $A_1$ of $r$-bounded index in $A$ that stabilizes each factor. So for every factor, $A_1$ induces a group of coprime automorphisms. Therefore this induced group is non-trivial only if this factor is a group of Lie type $L(q^e)$, where $q$ is a prime, and the induced group is a subgroup $\langle\psi\rangle$ of the cyclic group of field automorphisms of order dividing $e$. Since $L(q^e)$ contains an elementary abelian $q$-group of rank $e$, the bound for the rank implies that $e\leqslant r$, and so the order of $\psi$ is $r$-bounded. Hence the order of $A$ is $r$-bounded.
\end{proof}

\section{Proof of Theorem~\ref{main}}

Suppose that $G$ is a finite group admitting a group of coprime automorphisms $A$ such that any subgroup generated by a subset of $I_G(A)$ can be generated by $r$ elements. Our aim is to prove that $[G,A]$ has $r$-bounded rank.
The overall strategy involves the following steps. First we prove the result for the case of nilpotent group~$G$. Then we consider semisimple normal subgroups in the general case. Next, we use a version of the `non-modular' Hall--Higman type theorem to obtain a bound in terms of $r$ for the exponent of the automorphisms induced by $A$ on the quotient by the generalized Fitting subgroup $G/F^*(G)$. Combined with the previous result in \cite{tams}, this bound gives the proof of Theorem~\ref{main} for the case of cyclic group $A$, as well as a bound in terms of $r$ for the Fitting height of soluble $A$-invariant sections $S$ satisfying $S=[S,A]$. The proof is completed by reduction to the case of cyclic~$A$.

\subsection{The case of nilpotent groups}

First we settle the case where $G$ is nilpotent, by largely following the arguments in~\cite{tams}. Recall the usual notation $Z_i(H)$ and $\gamma_i(H)$ for the $i$-th term of the upper and lower central series of a group $H$, respectively.

\begin{lemma}\label{l-22}
Let $G$ be a group admitting a group of coprime automorphisms $A$ such that $G=[G,A]$. Let $p$ be a prime and suppose that $M$ is an $A$-invariant normal $p$-subgroup of $G$ such that $|[M,A]|=p^m$ for some non-negative integer $m$. Then $M\leqslant Z_{2m+1}(O_p(G))$.
\end{lemma}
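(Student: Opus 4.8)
The plan is to work inside $P=O_p(G)$, which contains $M$ since $M$ is a normal $p$-subgroup, and to exploit that each term $Z_i(P)$ of the upper central series, being characteristic in $P\trianglelefteq G$, is a normal $A$-invariant subgroup of $G$. Since $|I_M(A)|\ge[M:C_M(A)]$ by Lemma~\ref{l-IGA}(b), it suffices to set $p^{\mu}=[M:C_M(A)]$, note $\mu\le m$, and prove the sharper statement $M\le Z_{2\mu+1}(P)$. The main device is the chain $Y_k=M\cap Z_k(P)$: each $Y_k$ is $G$-normal and $A$-invariant, the layers $U_k=Y_{k+1}/Y_k$ are abelian $p$-groups carrying a coprime $A$-action, and a standard $p$-group fact shows the chain is strictly increasing until it stabilizes at $M=Y_d$, where $d$ is exactly the least index with $M\le Z_d(P)$. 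Iterating the coprime splitting $[X:C_X(A)]=[X/N:C_{X/N}(A)]\cdot[N:C_N(A)]$ (a consequence of Lemma~\ref{l-20}(b)) along this chain, I record that $\prod_k[U_k:C_{U_k}(A)]=p^{\mu}$, so at most $\mu$ of the layers carry a nontrivial $A$-action.

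The heart of the argument is a single observation, which I would isolate first: if $A$ acts trivially on $Y_k$, then since $Y_k\trianglelefteq G$ and $G=[G,A]$, Lemma~\ref{l-ker}(a) forces $Y_k\le Z(G)\le Z(P)=Z_1(P)$, whence $Y_k=Y_1$ and, by strictness, $k\le 1$. I would combine this with the coprime identity $[X,A]=[X,A,A]$: if $A$ is trivial on $Y_1$ and on $U_1=Y_2/Y_1$, then $[Y_2,A]\le Y_1$ gives $[Y_2,A]=[Y_2,A,A]\le[Y_1,A]=1$, so $A$ is trivial on $Y_2$, contradicting $Y_2\gneq Y_1$. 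Thus, as soon as $A$ fixes the bottom layer $U_0=Y_1$, it must move the next layer $U_1$.

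With these facts I would run a strong induction on $|M|$, the base case $\mu=0$ being immediate from Lemma~\ref{l-ker}(a) again ($M\le Z(G)\le Z_1(P)$). In the inductive step there are two cases. In Case~A, where $A$ acts nontrivially on $Y_1=M\cap Z(P)$, I would quotient by $R=[Y_1,A]\ne1$, which is $G$-normal, $A$-invariant and central in $P$; here $O_p(G/R)=P/R$, the valuation $\mu$ drops by at least $1$ (as $C_R(A)=1$ on the abelian $R$), and lifting through the central $R$ costs one level, yielding $M\le Z_{2\mu}(P)$. In Case~B, where $A$ is trivial on $Y_1$, I would pass to $G/Y_1$ (legitimate since $Y_1\le Z(G)$, with $\mu$ unchanged and $O_p(G/Y_1)=P/Y_1$); the image of $Y_2$ lies in $Z(P/Y_1)$ and carries a nontrivial $A$-action by the observation above, so $G/Y_1$ falls into Case~A and gives length at most $2\mu$, and lifting through the central $Y_1$ produces $M\le Z_{2\mu+1}(P)$. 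Since $\mu\le m$, this is the desired bound.

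The step I expect to be the genuine obstacle is Case~B, i.e.\ controlling a bottom layer on which $A$ acts trivially: a naive induction on $\mu$ wastes a central level there and overshoots by one. The resolution is exactly the layer observation above, which shows that a single $A$-fixed bottom layer is necessarily followed by an $A$-moved layer; this pairing of the ``wasted'' levels with commutator-bearing layers is what pins down the constant $2m+1$ rather than $2m+2$. The remaining points—that $O_p$, the upper central series, and the index $[M:C_M(A)]$ behave as expected under quotients by central $A$-invariant $p$-subgroups—are routine verifications that I would state and use without dwelling on them.
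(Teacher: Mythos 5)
Your overall architecture --- the chain $Y_k=M\cap Z_k(P)$, the multiplicativity of the indices $[\,\cdot:C_{\cdot}(A)]$ along it, and the observation that an $A$-fixed bottom layer forces a nontrivially moved second layer (which is indeed what secures $2m+1$ rather than $2m+2$) --- is sound, but Case~A contains a genuine error: the subgroup $R=[Y_1,A]$ need \emph{not} be normal in $G$, so the quotient $G/R$ on which your induction runs is not defined. Normality of $[Y_1,A]$ in $PA$ is automatic (it lies in $Z(P)$ and is $A$-invariant), which makes the claim tempting, but $Y_1$ is central only in $P$, not in $G$: conjugating by $g\in G$ gives $[y,a]^g=[y^g,a^g]$ with $a^g=ca\notin A$, where $c=g^{-1}g^{a^{-1}}\in G$, so all one gets is $[Y_1,A]^g\leq [Y_1,A]\,[Y_1,G]$. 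A concrete counterexample satisfying every hypothesis of the lemma: let $p=7$, $V=\F_7^{\,2}$, $G=V\rtimes\langle c\rangle$ with $c$ acting as $\mathrm{diag}(2,4)$ of order $3$, and $A=\langle a\rangle$ of order $2$, where $a$ swaps the two coordinates of $V$ and inverts $c$. Then $(|G|,|A|)=1$, $G=[G,A]$ (since $[c,a]=c^{-2}=c$ and $[V,c]=V$), $P=O_7(G)=V$, and for $M=V$ one has $Y_1=V$, $\mu=m=1$, so you are squarely in Case~A; but $R=[Y_1,A]=\langle(1,-1)\rangle$ is not $\langle c\rangle$-invariant, as $c\cdot(1,-1)=(2,3)$. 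Note also that the obvious fallback $R=[Y_1,A]^G$ destroys your assertion ``$C_R(A)=1$'': in this example $[Y_1,A]^G=V$ and $C_V(A)$ is the diagonal line.

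The gap is repairable inside your scheme: factor by $R=[Y_1,A]^G$ after all, which lies in $Y_1\leq Z(P)$ (because $Y_1\trianglelefteq G$), is $A$-invariant and $G$-normal, and still forces the valuation to drop --- not via $C_R(A)=1$, but because coprimality gives $[Y_1,A]=[Y_1,A,A]\leq[R,A]$, whence $[R:C_R(A)]=|[R,A]|\geq p$ by the decomposition $R=[R,A]\times C_R(A)$ of a coprime action on an abelian group; your bookkeeping ($Z_{2\mu}$ in Case~A, plus one level through the central $Y_1$ in Case~B) then survives unchanged and even yields the sharper bound with $\mu=\log_p[M:C_M(A)]\leq m$. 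It is instructive to compare with the paper's proof, which sidesteps the normality issue entirely: it inducts on $m$ and factors by $M\cap Z_2(K)$, $K=O_p(G)$, which is automatically normal in $G$ as the intersection of $M$ with a characteristic subgroup of $K$; Lemma~\ref{l-ker}(a) is used only to see that $A$ does not centralize this subgroup, so that $|I_{M/M\cap Z_2(K)}(A)|<p^m$, and each step costs two central levels while $m$ drops by at least one, giving $2m+1$ in one stroke with no case analysis. The two-level step $Z_2$ is precisely what makes normality, and hence the whole induction, come for free.
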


\begin{proof} We use induction on $m$. If $m=0$, then $M\leqslant C_G(A)$ and therefore $M\leqslant Z(G)$ by Lemma~\ref{l-ker}(a). Now let $m\geqslant 1$. Set $K=O_p(G)$ to lighten the notation.
 If $M\leqslant Z(K)$, there is nothing to prove. If $M\not\leqslant Z(K)$, then the image of $M$ in $K/Z(K)$ has a non-trivial intersection with the centre of this quotient. In other words, $M\cap Z_2(K) \not\leqslant Z(K)$. Then Lemma~\ref{l-ker}(a) implies that $[M\cap Z_2(K), A]\neq 1$. It follows that $|[M/(M\cap Z_2(K)), A]|<|[M,A]|=p^m$. Indeed,
 $$
 [M/(M\cap Z_2(K)), A]=[M,A](M\cap Z_2(K))/(M\cap Z_2(K))\cong [M,A]/([M,A]\cap (M\cap Z_2(K))),
 $$
 where $[M,A]\cap (M\cap Z_2(K))\geqslant [M\cap Z_2(K), A]\ne 1$. Thus, $|[M/(M\cap Z_2(K)), A]|\leqslant p^{m-1}$.
 By induction, $M/(M\cap Z_2(K)) \leqslant Z_{2m-1}(K/(M\cap Z_2(K)))$, whence $M\leqslant Z_{2m+1}(K)$, as required.
\end{proof}

Throughout the rest of this subsection, unless stated otherwise, $G$ is a $p$-group admitting a group of coprime automorphisms $A$ such that $G=[G,A]$ and any subgroup generated by a subset of $I_G(A)$ can be generated by $r$ elements.

\begin{lemma}\label{l-23} Suppose that $G$ is of prime exponent $p$ or of exponent~4. There exists a number $l(r)$ depending on $r$ only such that the rank of $G$ is at most $l(r)$.
\end{lemma}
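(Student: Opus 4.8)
The plan is to reduce the assertion to a bound on the nilpotency class of $G$. Taking the whole of $I_G(A)$ as the subset shows at once that $G=\langle I_G(A)\rangle$ can be generated by $r$ elements, so $d(G)\le r$; hence, by Lemma~\ref{l-nil-rank}, it suffices to bound the nilpotency class of $G$ by a function of $r$ alone. When $G$ has exponent~$4$ this is immediate: a $2$-group of exponent~$4$ generated by $r$ elements is a quotient of the free Burnside group $B(r,4)$, which is finite by Sanov's theorem, so its order, and with it its rank, is already bounded in terms of $r$ with no reference to $A$. Thus the real content is the case of exponent~$p$ with $p$ odd, where the bound must be produced independently of $p$.

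The first key step is a clean consequence of the hypothesis: for every $A$-invariant subgroup $H\le G$ we have $I_H(A)=I_G(A)\cap H\subseteq I_G(A)$, and this subset generates $[H,A]$; therefore $[H,A]$ can be generated by $r$ elements. Applying this to the $A$-invariant terms $H=\gamma_k(G)$ and passing to the associated Lie ring $L=\bigoplus_k L_k$ over $\F_p$, where $L_k=\gamma_k(G)/\gamma_{k+1}(G)$, we obtain $\dim[L_k,A]\le r$ for every $k$. Here $A$ acts coprimely, $L$ is generated by $L_1=[L_1,A]$, and $\dim L_1\le r$.

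The second key step bounds each homogeneous component of $L$ independently of $p$, and this is where the argument improves on the cyclic case of \cite{tams}. Using the $A$-equivariant bracket $L_k\times L_1\to L_{k+1}$ together with the coprime decomposition $L_k=[L_k,A]\oplus C_{L_k}(A)$, one checks that for a fixed point $c\in C_{L_k}(A)$ the map $v\mapsto[c,v]$ is $A$-equivariant, so its image is a quotient of $L_1$ and hence, like $L_1=[L_1,A]$, has no trivial $A$-constituent; consequently $[C_{L_k}(A),L_1]\subseteq[L_{k+1},A]$. Since $L_{k+1}=[[L_k,A],L_1]+[C_{L_k}(A),L_1]$, the image of $L_{k+1}$ in $L_{k+1}/[L_{k+1},A]\cong C_{L_{k+1}}(A)$ is spanned by $[[L_k,A],L_1]$, whence $\dim C_{L_{k+1}}(A)\le\dim[L_k,A]\cdot\dim L_1\le r^2$ and $\dim L_{k+1}\le r+r^2$. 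So every lower central factor of $G$ has $r$-bounded dimension, with a bound not involving $p$.

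The remaining, and hardest, step is to bound the nilpotency class (the number of nonzero $L_k$) in terms of $r$ alone. A uniform bound on the dimensions of the lower central factors does not by itself control the class: maximal class $p$-groups (of exponent $p^2$) already have lower central factors of dimension at most $2$ yet class and rank growing with $p$, so the exponent-$p$ hypothesis must enter in an essential way to force termination. The plan is to combine the uniform per-layer and fixed-point bounds just obtained with Lemma~\ref{l-22} and the Lie-theoretic analysis of groups of prime exponent (Kostrikin--Zel'manov) to force $L_k=0$ once $k$ exceeds a function of $r$. Once the class is bounded by some $c=c(r)$, Lemma~\ref{l-nil-rank} applied with $d(G)\le r$ yields the required bound $l(r)$ on the rank of $G$. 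I expect this termination --- turning bounded generation and the coprime automorphism data into a $p$-free class bound --- to be the main obstacle.
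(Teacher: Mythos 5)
Your preparatory reductions are correct: $d(G)\le r$ because $G=[G,A]=\langle I_G(A)\rangle$; the exponent-$4$ case does follow outright from Sanov's theorem on $B(r,4)$; and your uniform per-layer estimate $\dim L_k\le r+r^2$ for the lower central factors (via the $A$-equivariance trick showing $[C_{L_k}(A),L_1]\subseteq[L_{k+1},A]$) is a valid and genuinely $p$-free computation. But the proof has a genuine gap exactly where you flag it: bounding the nilpotency class by a function of $r$ alone is the entire content of the lemma for odd $p$, and the tool you propose cannot supply it. Kostrikin's and Zel'manov's theorems for the $(p-1)$-Engel Lie rings of exponent-$p$ groups give nilpotency class bounded in terms of the number of generators \emph{and} $p$ --- the Engel degree $p-1$ itself grows with $p$ --- so no $p$-free class bound can come out of that machinery as stated; and your own maximal-class example shows that the layer-dimension bounds cannot force termination either. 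As it stands, your argument proves only an $(r,p)$-bound, which already follows from the restricted Burnside problem applied to the $r$-generated group $G$ and is not what the lemma asserts.

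The paper closes this gap by a quite different mechanism, which never looks at the graded Lie ring: take Thompson's critical subgroup $C$ of $G$ (\cite[Theorem 5.3.11]{gore}), so that $[G,C]\le Z(C)$ and $C_G(C)=Z(C)$. The subgroup $[Z(C),A]$ is generated by the subset $I_{Z(C)}(A)\subseteq I_G(A)$, hence is $r$-generated; being abelian of exponent $p$ (or $4$), it therefore has order at most $p^r$ (or $2^{2r}$) --- this is the only point where the exponent hypothesis enters, converting ``$r$-generated'' into ``bounded order''. Consequently $|I_{Z(C)}(A)|\le p^r$, and Lemma~\ref{l-22} (which you cite in your plan but never actually deploy) yields $Z(C)\le Z_{2r+1}(G)$ (respectively $Z_{4r+1}(G)$); then $C\le Z_{2r+2}(G)$ since $[G,C]\le Z(C)$, so $\gamma_{2r+2}(G)$ centralizes $C$ and hence $\gamma_{2r+2}(G)\le C_G(C)=Z(C)\le Z_{2r+1}(G)$, bounding the class by roughly $4r+2$ (respectively $8r+2$). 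With the class $r$-bounded, Lemma~\ref{l-nil-rank} together with $d(G)\le r$ finishes, exactly as you intended. The missing idea, in short, is to apply the hypothesis to one well-chosen abelian $A$-invariant subgroup whose bounded order, fed through Lemma~\ref{l-22} and the self-centralizing property of the critical subgroup, controls the whole upper central series --- rather than trying to make all the lower central layers terminate at once.
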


\begin{proof}
Let $C$ be Thompson's critical subgroup of $G$ (see \cite[Theorem 5.3.11]{gore}).
Observe that $[Z(C),A]$ is an $r$-generated abelian subgroup of exponent $p$ (or 4) and so the order of $[Z(C),A]$ is at most $p^r$ (or $2^{2r})$. By Lemma \ref{l-22} $Z(C)$ is contained in $Z_{2r+1}(G)$ (or in $Z_{4r+1}(G)$). Since $[G,C]$ is contained in $Z(C)$, we conclude that $C$ is contained in $Z_{2r+2}(G)$ (or in $Z_{4r+2}(G)$). Recall that $\gamma_{2r+2}(G)$ commutes with $Z_{2r+2}(G)$ and so in particular $\gamma_{2r+2} (G)$ (respectively, $\gamma_{4r+2} (G)$) centralizes $C$. By Thompson's theorem, $C_G(C)=Z(C)$. Thus $\gamma_{2r+2}(G)$ (respectively, $\gamma_{4r+2}(G)$) is contained in $Z(C)$, that is, the quotient $G/Z(C)$ is nilpotent of class $2r+1$ (respectively, of class $4r+1$). Since $Z(C)\leqslant Z_{2r+1}(G)$ (or $Z(C)\leqslant Z_{4r+1}(G)$), it follows that $G$ has $r$-bounded nilpotency class. Since $G=[G,A]$ is $r$-generated by hypothesis, by Lemma~\ref{l-nil-rank} the rank of $G$ is $r$-bounded, as desired.
\end{proof}

We will require the concept of powerful $p$-groups introduced by A.~Lubotzky and A.~Mann in \cite{LM}. A finite $p$-group $H$ is \emph{powerful} if and only if $[H,H] \leqslant H^p$ for $p\neq 2$ (or $[H,H]\leqslant H^4$ for $p = 2$).
Apart from the original paper \cite{LM}, information about the properties of powerful $p$-groups can also be found in the books \cite{DDM} or \cite{khukhu2}.

\begin{lemma}\label{l-powerful}
There exists a number $\lambda=\lambda(r)$ depending only on $r$ such that $\gamma_{2\lambda+1}(G)$ is powerful.
\end{lemma}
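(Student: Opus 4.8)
The plan is to strip the exponent down to the case already handled in Lemma~\ref{l-23}, thereby reducing the statement to a fact about a single $p$-group whose power quotient is small, and then to locate the powerful term by the Lubotzky--Mann calculus.

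First I would note that $G$ is itself $r$-generated: the set $I_G(A)$ is one of the admissible subsets of $I_G(A)$, and it generates $\langle I_G(A)\rangle=[G,A]=G$, so the hypothesis gives $d(G)\le r$. Next I would pass to the quotient $\bar G=G/G^p$ for $p$ odd (and $\bar G=G/G^4$ for $p=2$), which has exponent $p$ (respectively exponent dividing $4$). This quotient inherits the standing hypotheses: $\bar G=[\bar G,A]$, the action of $A$ remains coprime, and $I_{\bar G}(A)$ is the image of $I_G(A)$, so every subgroup generated by a subset of $I_{\bar G}(A)$ is $r$-generated. Hence Lemma~\ref{l-23} applies and gives that $\bar G$ has $r$-bounded rank; feeding this into Lemma~\ref{l-24} (with exponent $p$, respectively $p^2$) shows that $\bar G$ has $r$-bounded order. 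Thus $|G:G^p|\le p^{t}$ (respectively $|G:G^4|\le 2^{t}$) for an $r$-bounded $t$, and in particular $\gamma_{c+1}(G)\le G^p$ (respectively $\le G^4$) for an $r$-bounded $c$.

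What remains is a statement about a single $p$-group: if $|G:G^p|\le p^{t}$ (for $p$ odd, with the analogue using $G^4$ for $p=2$), then $\gamma_{2\lambda+1}(G)$ is powerful for some $\lambda=\lambda(t)$. Here I would use the theory of powerful and powerfully embedded $p$-groups together with the Hall--Petrescu collection formula, following the $p$-th power map down the lower central series. Concretely, the goal is to produce an $r$-bounded $k$ (of the form $2\lambda+1$) with $[\gamma_k(G),\gamma_k(G)]\le\gamma_k(G)^p$; since $\gamma_k(G)$ is normal in $G$, this makes $\gamma_k(G)$ powerful. The smallness of $G/G^p$ should force the lower central series and the power series to fall into step after boundedly many terms, so that the commutators $[\gamma_k(G),\gamma_k(G)]$, which already lie in $G^p$ once $2k>c$, can be rewritten as products of $p$-th powers of elements of $\gamma_k(G)$ itself.

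I expect this final step to be the main obstacle, because the inclusion $[\gamma_k(G),\gamma_k(G)]\le\gamma_k(G)^p$ can hold for two quite different reasons, and a uniform argument must cover both. In one regime a bounded term $\gamma_k(G)$ is already abelian, so the inclusion is automatic but the crude estimate $\gamma_{2k}(G)\le\gamma_k(G)^p$ is false; in the other regime $\gamma_k(G)$ stays non-abelian while $\gamma_k(G)^p$ is large enough to swallow the commutators because the $p$-th power map advances the lower central series by only a bounded number of steps. The crux is therefore to show, from the bound on $|G:G^p|$ alone, that one of these mechanisms is in force at bounded depth. I would approach this by analysing the successive factors $\gamma_k(G)/\gamma_{k+1}(G)$ and the $p$-power maps between them, and---should a purely internal argument prove elusive---by reinstating the coprime action and using Lemma~\ref{l-22} to control how deep in the upper central series a factor of bounded exponent can lie, which is precisely the source of the index $2\lambda+1$.
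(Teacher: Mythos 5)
There is a genuine gap, and it sits exactly where you flag it: the final step. Your reduction bounds the wrong quotient. Knowing $|G:G^p|\le p^{t}$ (equivalently $\gamma_{t+1}(G)\le G^{p}$) gives no purchase on the inclusion $[\gamma_k(G),\gamma_k(G)]\le \gamma_k(G)^{p}$, because being powerful is a condition on the $p$-th powers of the subgroup $N=\gamma_k(G)$ itself, not on $G^{p}$. Your proposed ``purely internal'' lemma --- that bounded $|G:G^{p}|$ forces some boundedly deep lower-central term to be powerful --- is never proved; the claim in your second regime that the $p$-th power map advances the lower central series by only boundedly many steps is unsupported (the power structure of a $p$-group is not controlled this way by $|G:G^{p}|$ alone), and the two-regime analysis is speculative rather than an argument. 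So the proposal, as written, does not prove the lemma.

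The paper's proof avoids $G/G^{p}$ entirely and instead reduces modulo $N^{p}$ (resp.\ $N^{4}$ for $p=2$) for the candidate $N=\gamma_{2\lambda+1}(G)$, so that $N$ \emph{itself} has exponent $p$ (or $4$), and then shows $N$ is abelian in this quotient, which is precisely $N'\le N^{p}$ (resp.\ $N'\le N^{4}$) in $G$. The chain is: Lemma~\ref{l-23} applied to $[N,A]$ --- legitimate since $I_{[N,A]}(A)\subseteq I_G(A)$ and $[N,A]=[[N,A],A]$ by coprimality --- bounds its rank by $l(r)$; Lemma~\ref{l-24} then gives $|[N,A]|\le p^{\lambda}$ with $\lambda=s'(l(r))$, whence $|I_N(A)|\le p^{\lambda}$; Lemma~\ref{l-22} places $N\le Z_{2\lambda+1}(G)$; and the identity $[\gamma_i(G),Z_i(G)]=1$ with $i=2\lambda+1$ forces $N=\gamma_{2\lambda+1}(G)$ to be abelian. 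Your closing remark that Lemma~\ref{l-22} is ``precisely the source of the index $2\lambda+1$'' shows you located the right ingredient, but your plan has no mechanism to bound $|I_N(A)|$ --- your exponent computation concerns $G/G^{p}$, not $N$ modulo its own powers --- hence no way to invoke Lemma~\ref{l-22}, and correspondingly no way to define $\lambda$. The missing idea is this shift from the top quotient $G/G^{p}$ to the bottom subgroup $N$ made to have exponent $p$, together with the interleaving of the upper and lower central series at the same index.
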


\begin{proof}
Let $s'(m)=s(1,m)$ if $p\ne 2$, and $s'(m)=s(2,m)$ if $p=2$ for the function $s(k,m)$ as in Lemma~\ref{l-24}, and let $l(r)$ be as in Lemma~\ref{l-23}. Take $N=\gamma_{2\lambda+1}(G)$, where $\lambda=s'(l(r))$. In order to show that $N'\leqslant N^p$ (or $N'\leqslant N^4$ when $p=2$), we assume that $N$ is of exponent $p$ (or 4) and prove that $N$ is abelian.

Since the subgroup $[N,A]$ is of exponent $p$ (or $4$), the rank of $[N,A]$ is at most $l(r)$ by Lemma~\ref{l-23}. Then $|[N,A]|\leqslant p^{s'(l(r))}=p^\lambda$ by Lemma~\ref{l-24}. We now obtain $N\leqslant Z_{2\lambda+1}(G)$ by Lemma~\ref{l-22}. Since $[\gamma_i(G),Z_i(G)]=1$ for any positive integer $i$, we conclude that $N$ is abelian, as required.
\end{proof}

\begin{lemma}\label{l-25}
For any $i\geqslant 1$, there exists a number $m_i=m(i,r)$ depending only on $i$ and $r$ such that $\gamma_i(G)$ is an $m_i$-generated group.
\end{lemma}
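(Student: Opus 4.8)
The plan is to induct on $i$, using the powerful structure established in Lemma~\ref{l-powerful} as the base of the argument and then climbing down the lower central series. The key point is that for $i \geq 2\lambda+1$, the subgroup $\gamma_i(G)$ lies inside the powerful subgroup $N = \gamma_{2\lambda+1}(G)$, and powerful $p$-groups enjoy a very rigid generation theory: by the Lubotzky--Mann theory, in a powerful $p$-group the rank equals the minimal number of generators, and this number is controlled by the Frattini quotient. So I would first aim to bound the number of generators of $N$ itself, and then handle the finitely many terms $\gamma_i(G)$ for $i < 2\lambda+1$ separately (there are only $r$-boundedly many of them, since $\lambda = \lambda(r)$).

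\medskip
\noindent\textbf{Bounding the generators of the powerful part.}
First I would show that $N = \gamma_{2\lambda+1}(G)$ is $m$-generated for an $r$-bounded $m$. The natural route is to pass to the Frattini quotient $N/N^p[N,N] = N/\Phi(N)$, whose dimension over $\F_p$ equals $d(N)$, the minimal number of generators, and which (for powerful $N$) also equals the rank of $N$. Since $A$ acts coprimely and $N$ is $A$-invariant, $A$ acts on this elementary abelian quotient $\bar N = N/\Phi(N)$, and by coprime action $\bar N = [\bar N, A] \times C_{\bar N}(A)$. The subgroup $[\bar N, A]$ is generated by images of elements of $I_N(A) \subseteq I_G(A)$, hence is $r$-generated by hypothesis, so $\dim [\bar N, A] \leq r$. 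The difficulty is the fixed-point part $C_{\bar N}(A)$: there is no direct hypothesis bounding it. Here I expect to use Lemma~\ref{l-ker}(a) together with the lower-central-series structure: because $N = [N,A]$ follows from $G = [G,A]$ and coprime action (applied to the relevant quotient), the whole of $\bar N$ should in fact coincide with $[\bar N, A]$, forcing $C_{\bar N}(A) = 1$ and $d(N) \leq r$. Verifying that $N = [N,A]$, i.e.\ that $A$ leaves no fixed part on the Frattini quotient of $\gamma_{2\lambda+1}(G)$, is the crux.

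\medskip
\noindent\textbf{Descending through the remaining terms.}
Once $N = \gamma_{2\lambda+1}(G)$ is known to be $r$-generated (in fact of $r$-bounded rank, by powerfulness via Lemma~\ref{l-powerful}), the terms $\gamma_i(G)$ for $i \geq 2\lambda+1$ are all subgroups of $N$, so each is $r$-generated as a subgroup of a group of $r$-bounded rank. For the finitely many initial terms $\gamma_i(G)$ with $2 \leq i < 2\lambda+1$, I would argue by downward induction on $i$ starting from $i = 2\lambda+1$: given that $\gamma_{i+1}(G)$ is $m_{i+1}$-generated, the quotient $\gamma_i(G)/\gamma_{i+1}(G)$ is a central section generated by the basic commutators of weight $i$, whose number is controlled by $d(G)$ and $i$. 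Since $G = [G,A]$ is $r$-generated by hypothesis, $d(G) \leq r$, so $\gamma_i(G)/\gamma_{i+1}(G)$ is generated by an $(i,r)$-bounded number of elements, and combining with $m_{i+1}$ gives $m_i = m(i,r)$. This completes the induction.

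\medskip
\noindent The step I expect to be the main obstacle is confirming that the fixed-point part of $A$ on the Frattini quotient of $\gamma_{2\lambda+1}(G)$ vanishes, i.e.\ reducing the generation count to the $r$-generated subgroup $[\bar N, A]$ rather than all of $\bar N$; everything downstream is then either standard powerful-$p$-group theory or routine counting of basic commutators in an $r$-generated group.
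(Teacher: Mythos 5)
Your plan has a genuine gap exactly at the step you flag as the crux, and the claimed resolution of it is false: the assertion that ``$N=[N,A]$ follows from $G=[G,A]$ and coprime action'' does not hold, because the property $G=[G,A]$ does not pass to verbal subgroups such as $\gamma_i(G)$. All that Lemma~\ref{l-ker}(a) gives is that a normal subgroup contained in $C_G(A)$ is centralized by $[G,A]=G$, i.e.\ is central --- not that it is trivial. A concrete counterexample: let $G$ be extraspecial of order $p^3$ and exponent $p$, and let $\al$ be the coprime automorphism inducing $\mathrm{diag}(a,a^{-1})\in GL_2(\F_p)$ on $G/Z(G)$ with $a\ne 1$. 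Since $\al$ is fixed-point-free on $G/Z(G)$ we get $G=[G,\al]Z(G)$ and hence $G=[G,\al]$ (as $Z(G)=\gamma_2(G)\leq[G,\al]$), yet $\al$ acts on $\gamma_2(G)=Z(G)$ by the determinant $1$, so $\gamma_2(G)$ is fixed elementwise and $C_{\bar N}(A)=\bar N$ on the Frattini quotient of $N=\gamma_2(G)$. So the fixed-point part you hope to kill can be everything, your base case --- bounding $d(\gamma_{2\lambda+1}(G))$ --- is unproven, and the remainder of your architecture (powerful-group theory for $i\geq 2\lambda+1$, downward induction via basic commutators for the $r$-boundedly many smaller $i$, both of which are fine in themselves) has no base to stand on.

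The paper closes precisely this hole with Lemma~\ref{l-22}, which your proposal never invokes: fixed points are not eliminated but tolerated, because a small commutator set forces $N$ into a low term of the \emph{upper} central series. Namely, pass to $G/\Phi(\gamma_i(G))$ so that $N=\gamma_i(G)$ is elementary abelian; then $[N,A]$ is an elementary abelian group generated by a subset of $I_G(A)$, hence of order at most $p^r$, so $|I_N(A)|\leq p^r$. Lemma~\ref{l-22} gives $N\leq Z_{2r+1}(G)$, whence $G$ has nilpotency class bounded in terms of $i$ and $r$; since $G=[G,A]$ is $r$-generated, Lemma~\ref{l-nil-rank} bounds the rank of $G$, and in particular $d(\gamma_i(G))$, in terms of $i$ and $r$. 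Note that this argument treats all $i$ uniformly and needs neither Lemma~\ref{l-powerful} nor the decomposition $\bar N=[\bar N,A]\times C_{\bar N}(A)$. If you wish to keep your two-stage architecture you must in any case replace the false identity $N=[N,A]$ by an appeal to Lemma~\ref{l-22} at the base --- at which point the powerful-group detour and the basic-commutator descent become superfluous.
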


\begin{proof}
Let $N=\gamma_i(G)$. We can pass to the quotient $G/\Phi(N)$ and assume that $N$ is elementary abelian. It follows that $|[N,A]|\leqslant p^r$. Then $N\leqslant Z_{2r+1}(G)$ by Lemma \ref{l-22}, and therefore $G$ has nilpotency class bounded only in terms of $i$ and $r$. Since $G=[G,A]$ is $r$-generated, the rank of $G$ is also $(i,r)$-bounded by Lemma~\ref{l-nil-rank}. In particular, $N$ is $m_i$-generated for some $(i,r)$-bounded number $m_i$.
\end{proof}

The next proposition shows that Theorem \ref{main} is valid in the case where $G$ is a $p$-group.

\begin{proposition}\label{p-pcase}
Suppose that $G$ is a $p$-group admitting a group of coprime automorphisms $A$ such that any subgroup generated by a subset of $I_G(A)$ can be generated by $r$ elements. Then $[G,A]$ has $r$-bounded rank.
\end{proposition}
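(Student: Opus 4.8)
The plan is to first reduce to the case $G=[G,A]$ and then exploit the structural lemmas already in place to split $G$ into a powerful piece of bounded rank and a nilpotent quotient of bounded class. To carry out the reduction, I would replace $G$ by the $A$-invariant $p$-subgroup $H=[G,A]$. Since the action is coprime, the standard identity gives $[H,A]=[[G,A],A]=[G,A]=H$, and since $I_H(A)\subseteq I_G(A)$, every subgroup generated by a subset of $I_H(A)$ is still $r$-generated. Thus $H$ satisfies all the running hypotheses of this subsection (including $H=[H,A]$), so it suffices to bound the rank of $H$; moreover, as $H=\langle I_H(A)\rangle$, taking the full generating set $I_H(A)$ shows that $H$ is itself $r$-generated.

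So assume $G=[G,A]$, an $r$-generated $p$-group. The heart of the argument is that, although $G$ may have arbitrarily large nilpotency class, a fixed term of its lower central series is powerful. By Lemma~\ref{l-powerful} there is an $r$-bounded number $\lambda=\lambda(r)$ such that $N:=\gamma_{2\lambda+1}(G)$ is powerful, and by Lemma~\ref{l-25} this same $N$ is $m$-generated for an $r$-bounded number $m=m(2\lambda+1,r)$. The point of powerfulness is precisely that it converts a bound on the number of generators into a bound on the rank: by the Lubotzky--Mann theory \cite{LM}, in a powerful $p$-group every subgroup can be generated by $d(N)$ elements, so $N$ has rank at most $m$, which is $r$-bounded.

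It remains to control the quotient $G/N=G/\gamma_{2\lambda+1}(G)$, which is nilpotent of class at most $2\lambda$ and is $r$-generated as a quotient of $G$; Lemma~\ref{l-nil-rank} then gives that its rank is $r$-bounded. Finally, since rank is subadditive along a normal subgroup (any subgroup $L\le G$ satisfies $d(L)\le d(L\cap N)+d(LN/N)\le \mathrm{rank}(N)+\mathrm{rank}(G/N)$), the rank of $G$ is bounded above by $\mathrm{rank}(N)+\mathrm{rank}(G/N)$, hence $r$-bounded, as required.

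The main obstacle, and the reason the naive approach fails, is that one cannot simply apply Lemma~\ref{l-nil-rank} to $G$ directly: an $r$-generated $p$-group equal to $[G,A]$ need not have $r$-bounded nilpotency class, so its rank is a priori unbounded. The work is therefore concentrated in Lemmas~\ref{l-powerful} and~\ref{l-25} (resting in turn on Lemma~\ref{l-22}), which isolate a powerful tail of bounded rank sitting below a quotient of bounded class; once both pieces are available, the assembly above is routine.
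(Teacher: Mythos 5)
Your proposal is correct and takes essentially the same approach as the paper's proof: reduce to $G=[G,A]$, take $N=\gamma_{2\lambda+1}(G)$, which is powerful by Lemma~\ref{l-powerful} and boundedly generated by Lemma~\ref{l-25}, so of $r$-bounded rank by the Lubotzky--Mann theory, then bound the rank of the bounded-class quotient $G/N$ via Lemma~\ref{l-nil-rank} and add the two ranks. The only difference is cosmetic: you spell out the coprime-action reduction $[[G,A],A]=[G,A]$ and the inheritance of the hypothesis by $H=[G,A]$, which the paper dismisses with ``we can obviously assume that $G=[G,A]$''.
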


\begin{proof}
We can obviously assume that $G=[G,A]$. Let $s'(m)=s(1,m)$ if $p\ne 2$, and $s'(m)=s(2,m)$ if $p=2$ for the function $s(k,m)$ as in Lemma~\ref{l-24}, and let $l(r)$ be as in Lemma \ref{l-23}. Take $N=\gamma_{2\lambda+1}(G)$, where $\lambda=\lambda(r)=s'(l(r))$. Let $d$ be the minimum number of generators of $N$. The number $d$ is $r$-bounded by Lemma~\ref{l-25}, and $N$ is a powerful $p$-group by Lemma \ref{l-powerful}. By the properties of powerful $p$-groups (see, for example, \cite[Theorem 2.9]{DDM}) the rank of $N$ is equal to $d$ and therefore is $r$-bounded. Since the nilpotency class of $G/N$ is $r$-bounded (recall that $\lambda$ depends only on $r$) and $G=[G,A]$ is $r$-generated, the rank of $G/N$ is also $r$-bounded by Lemma~\ref{l-nil-rank}. Since the rank of $G$ is at most the sum of the ranks of $G/N$ and $N$, the result follows.
\end{proof}

\begin{corollary}\label{c-nilp} Suppose that $G$ is a nilpotent group admitting a group of coprime automorphisms $A$ such that any subgroup generated by a subset of $I_G(A)$ can be generated by $r$ elements. Then $[G,A]$ has $r$-bounded rank.
\end{corollary}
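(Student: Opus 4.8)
The plan is to reduce the nilpotent case to the $p$-group case already established in Proposition~\ref{p-pcase}, using the fact that a nilpotent finite group is the direct product of its Sylow subgroups. First I would observe that we may assume $G=[G,A]$, since passing to $[G,A]$ does not change the hypothesis (any subset of $I_{[G,A]}(A)=I_G(A)$ generating a subgroup is still $r$-generated) and $[[G,A],A]=[G,A]$ by the standard coprime commutator identity, which follows from Lemma~\ref{l-20}(b) applied to $[G,A]$.

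Write $G=P_1\times\cdots\times P_k$ as the direct product of its Sylow subgroups $P_i=O_{p_i}(G)$, each of which is characteristic in $G$ and hence $A$-invariant. The key structural point is that the decomposition is respected by both the $A$-action and the commutator structure: each $P_i=[P_i,A]$ (because $[G,A]=G$ and commutators respect the direct product, so $[G,A]=[P_1,A]\times\cdots\times[P_k,A]$ forces $[P_i,A]=P_i$), and $I_G(A)$ decomposes componentwise, with $I_{P_i}(A)=\{[g,\alpha]:g\in P_i,\alpha\in A\}$ being exactly the projection of $I_G(A)$ to the $i$-th factor. Consequently the hypothesis passes to each factor: any subgroup generated by a subset of $I_{P_i}(A)$ is a subgroup of $G$ generated by a subset of $I_G(A)$, hence $r$-generated. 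Applying Proposition~\ref{p-pcase} to each $P_i$ then gives that each $[P_i,A]=P_i$ has $r$-bounded rank, say rank at most $\rho(r)$.

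The remaining task is to bound the number $k$ of distinct primes, since the rank of a direct product is at most the sum of the ranks of the factors, and without a bound on $k$ this sum is uncontrolled. Here I would use that for each prime $p_i$, the factor $P_i$ is nontrivial with $P_i=[P_i,A]$, so $I_{P_i}(A)$ contains a nontrivial element; picking one such element $[g,\alpha]$ for each $i$, these lie in distinct Sylow subgroups and hence generate a subgroup isomorphic to a direct product of $k$ nontrivial cyclic groups of coprime orders, which therefore has rank exactly $k$. Since this subgroup is generated by a subset of $I_G(A)$ of size $k$, the hypothesis forces $k\le r$. Therefore the rank of $G=[G,A]$ is at most $\sum_{i=1}^k \operatorname{rank}(P_i)\le k\cdot\rho(r)\le r\cdot\rho(r)$, which is $r$-bounded.

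The step I expect to require the most care is the componentwise decomposition of the commutator set and the verification that the hypothesis genuinely descends to each Sylow factor: one must check that projecting $I_G(A)$ to a factor really yields $I_{P_i}(A)$ and, conversely, that subsets of $I_{P_i}(A)$ are legitimately subsets of $I_G(A)$, so that the $r$-generation hypothesis transfers without any loss depending on $k$. The bound on $k$ is the genuinely new ingredient beyond the $p$-group case, but it is short; the main conceptual content of the corollary is simply that nilpotency lets us glue the prime-by-prime conclusions of Proposition~\ref{p-pcase} together, with the global hypothesis simultaneously controlling the individual ranks and the number of primes involved.
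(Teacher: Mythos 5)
Your reductions up to the prime-by-prime application of Proposition~\ref{p-pcase} are correct and essentially match the paper's route, but the step bounding the number $k$ of primes contains a genuine error. The elements $[g_1,\alpha_1],\dots,[g_k,\alpha_k]$ you pick lie in distinct Sylow factors, so they have pairwise coprime orders, and the subgroup they generate is a direct product of nontrivial cyclic groups of pairwise coprime orders --- which is \emph{cyclic}, hence of rank $1$, not rank $k$. (Contrast this with the proof of Lemma~\ref{l-3}, where the analogous trick works precisely because the chosen commutators all have even order, so the Sylow $2$-subgroup of the abelian subgroup they generate really does have rank equal to the number of factors.) Consequently the hypothesis does not force $k\le r$, and in fact $k$ cannot be bounded in terms of $r$ at all: let $G$ be cyclic of odd squarefree order with $k$ prime divisors and let $A=\langle\alpha\rangle$ of order $2$ act by inversion. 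Then $[g,\alpha]=g^{-2}$, so $I_G(A)=G$ and every subgroup generated by a subset of $I_G(A)$ is cyclic; the hypothesis holds with $r=1$ and $G=[G,A]$, while $k$ is arbitrary. Your final bound $k\cdot\rho(r)$ therefore cannot be salvaged as stated.

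The gap disappears entirely once you replace ``rank of a direct product is at most the sum of the ranks'' by the correct statement for factors of coprime order: the rank of a nilpotent group equals the \emph{maximum} of the ranks of its Sylow subgroups. Indeed, every subgroup $H\le P_1\times\dots\times P_k$ is itself the direct product of its Sylow subgroups $H\cap P_i$, and for groups $Q_i$ of pairwise coprime orders one has $d(Q_1\times\dots\times Q_k)=\max_i d(Q_i)$, since coprimality lets one fold generating tuples of the factors into a single tuple of the same length. This is exactly the paper's one-line proof: the rank of $[G,A]=[P_1,A]\times\dots\times[P_k,A]$ equals the rank of $[P,A]$ for some Sylow $p$-subgroup $P$, which is $r$-bounded by Proposition~\ref{p-pcase}, with no bound on $k$ needed or possible.
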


\begin{proof} The rank of $[G,A]$ is equal to the rank of $[P,A]$, where $P$ is some Sylow $p$-subgroup of $G$, and the result follows from Proposition~\ref{p-pcase}.
\end{proof}

\subsection{The case of semisimple groups}

Let $G$ be a finite group admitting a group of coprime automorphisms $A$
such that any subset of $I_G(A)$ generates an $r$-generator subgroup. We want to prove that $[G,A]$ has $r$-bounded rank. Throughout this subsection we
assume that $G=[G,A]$.

\begin{lemma}\label{l-1}
If $G$ is simple, then the rank of $G$ is $r$-bounded.
\end{lemma}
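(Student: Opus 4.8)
The plan is to invoke Lemma~\ref{l-0000}(b), which reduces the problem of bounding the rank of a simple group $G$ admitting coprime automorphisms $A$ to bounding the rank of $[P,A]$ for every $A$-invariant Sylow subgroup $P$ of $G$. Such Sylow subgroups exist by Lemma~\ref{l-20}(a), so it suffices to produce an $r$-bounded bound on the rank of each $[P,A]$.

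To do this, I would first observe that the hypothesis on $I_G(A)$ passes down to each $A$-invariant Sylow subgroup. Indeed, since $P\leq G$, every commutator $[g,\alpha]$ with $g\in P$ and $\alpha\in A$ lies in $I_G(A)$, so $I_P(A)\subseteq I_G(A)$. Consequently any subgroup generated by a subset of $I_P(A)$ is generated by a subset of $I_G(A)$ and is therefore $r$-generated. Thus $P$, together with the coprime action of $A$ obtained by restriction, satisfies exactly the hypotheses of Proposition~\ref{p-pcase}.

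Applying Proposition~\ref{p-pcase} then gives that $[P,A]$ has $r$-bounded rank, say bounded by some function $g(r)$, and this holds uniformly over all $A$-invariant Sylow subgroups $P$. Feeding this into Lemma~\ref{l-0000}(b), applied with the parameter $g(r)$ in place of $r$, yields that the rank of $G$ is bounded by a function of $g(r)$, hence $r$-bounded, as required. (This is precisely the bound already flagged at the start of the subsection.)

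There is no serious obstacle here: the genuine work has already been carried out in the $p$-group case (Proposition~\ref{p-pcase}) and in the structural reduction for simple groups (Lemma~\ref{l-0000}(b)). The only points requiring care are the elementary verification that the global hypothesis restricts correctly to Sylow subgroups through the inclusion $I_P(A)\subseteq I_G(A)$, and the observation that the $r$-\emph{bounded} bound (as opposed to an exact bound $\leq r$) coming from Proposition~\ref{p-pcase} composes with Lemma~\ref{l-0000}(b) to give an $r$-bounded conclusion.
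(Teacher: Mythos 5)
Your proposal is correct and is exactly the paper's argument: the paper proves this lemma by citing Lemma~\ref{l-0000}(b) together with Proposition~\ref{p-pcase}, and your write-up merely makes explicit the routine verifications (existence of $A$-invariant Sylow subgroups, the inclusion $I_P(A)\subseteq I_G(A)$, and the composition of $r$-bounded bounds) that the paper leaves implicit.
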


\begin{proof} For any $A$-invariant Sylow subgroup $P$ of $G$ the rank of $[P,A]$ is $r$-bounded by Proposition~\ref{p-pcase}. Then $G$ has $r$-bounded rank by
Lemma~\ref{l-0000}(b).
\end{proof}

Recall that a semisimple group is a direct product of non-abelian simple groups.

\begin{lemma}\label{l-2} Suppose that $G$ is semisimple and $A$ transitively permutes the simple factors. Then the rank of $G$ is $r$-bounded.
\end{lemma}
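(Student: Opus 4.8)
The plan is to exploit the product structure $G=S_1\times\cdots\times S_l$ with pairwise isomorphic nonabelian simple factors. Since the sectional rank is subadditive along a normal series, the rank of $G$ is at most $\sum_i(\text{rank of }S_i)=l\cdot(\text{rank of }S_1)$; so $\mathrm{rank}(G)\le l\cdot\mathrm{rank}(S_1)$. It therefore suffices to bound, in terms of $r$, the two quantities: the number $l$ of factors and the rank of a single factor $S_1$. The case $l=1$ is exactly Lemma~\ref{l-1}, so I may assume $l\ge 2$.

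First I would bound $l$. Fix a prime $q\in\pi(S_1)$ and an element $t\in S_1$ of order $q$, and regard $t$ as $(t,1,\dots,1)\in G$. Since $A$ permutes the factors transitively, for each $j\in\{2,\dots,l\}$ choose $\alpha_j\in A$ with $S_1^{\alpha_j}=S_j$. Then $x_j=[(t,1,\dots,1),\alpha_j]\in I_G(A)$ is supported only on the coordinates $1$ and $j$: it equals $t^{-1}$ in coordinate $1$ and the order-$q$ element $t^{\alpha_j}\in S_j$ in coordinate $j$. The elements $x_2,\dots,x_l$ pairwise commute (their only common support is coordinate $1$) and all have order $q$; moreover $x_j$ is the only one among them with nontrivial $j$-th coordinate, so inside the elementary abelian group $\langle t\rangle\times\langle t^{\alpha_2}\rangle\times\cdots\times\langle t^{\alpha_l}\rangle\cong\F_q^{\,l}$ they are $\F_q$-linearly independent. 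Hence they generate an elementary abelian $q$-group of rank $l-1$. As this subgroup is generated by a subset of $I_G(A)$, the hypothesis forces $l-1\le r$, that is, $l\le r+1$.

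Next I would bound the rank of $S_1$ using the simple-group machinery already in place. Let $A_1=\{\alpha\in A: S_1^{\alpha}=S_1\}$ be the stabiliser of the first factor; it acts on $S_1$ by coprime automorphisms. For each prime $q\in\pi(S_1)$, pick by Lemma~\ref{l-20} an $A$-invariant Sylow $q$-subgroup $Q=Q_1\times\cdots\times Q_l$ of $G$ in which $Q_1$ is an $A_1$-invariant Sylow $q$-subgroup of $S_1$. By Proposition~\ref{p-pcase} the rank of $[Q,A]$ is $r$-bounded. The key observation is that the coordinate embedding $s\mapsto(s,1,\dots,1)$ carries $I_{S_1}(A_1)$ into $I_G(A)$: for $\alpha\in A_1$ one has $[(s,1,\dots,1),\alpha]=([s,\alpha],1,\dots,1)$. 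Consequently $[Q_1,A_1]$ embeds as a subgroup of $[Q,A]$, so its rank is $r$-bounded as well. Since this holds for every $A_1$-invariant Sylow subgroup of the simple group $S_1$, Lemma~\ref{l-0000}(b) yields that the rank of $S_1$ is $r$-bounded. Combining this with $l\le r+1$ gives the claimed $r$-bound for the rank of $G$.

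The main obstacle is the first step: turning the single transitivity hypothesis into a subset of $I_G(A)$ whose generated subgroup has sectional rank growing with $l$. The construction above achieves this by forcing the ``difference'' commutators into distinct coordinates, which makes the linear independence—and hence the lower bound $l-1$ for the rank—transparent; the hypothesis on $I_G(A)$ then caps $l$. The second step is comparatively routine once one notices that, via the coordinate embedding, the hypothesis on $I_G(A)$ restricts precisely to the analogous hypothesis for $A_1$ acting on the single factor $S_1$, so that Proposition~\ref{p-pcase} together with Lemma~\ref{l-0000}(b) finishes the argument.
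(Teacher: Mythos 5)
Your first step --- bounding the number of factors by $r+1$ via the commutators $[t,\alpha_j]$ supported on coordinates $1$ and $j$ --- is correct and is essentially the paper's own argument (the paper uses an involution where you use an element of prime order $q$; the linear-independence reasoning is the same). The genuine gap is in your second step. You reduce the rank of $S_1$ to the pair $(S_1,A_1)$, where $A_1$ is the stabiliser of $S_1$ in $A$, and then invoke Lemma~\ref{l-0000}(b). But transitivity of $A$ on the factors gives no control whatsoever over how $A_1$ acts on $S_1$: the stabiliser can be trivial, or can centralize $S_1$. For instance, take $G=S\times S\times S$ with $S$ a simple group of order coprime to $3$ (e.g., a Suzuki group $Sz(2^{2n+1})$, whose rank grows with $n$), and let $A=\langle\alpha\rangle$ of order $3$ permute the factors regularly; then $A_1=1$, so $I_{S_1}(A_1)=\{1\}$ and $[Q_1,A_1]=1$ for every Sylow subgroup $Q_1$ of $S_1$. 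In this situation the hypothesis of Lemma~\ref{l-0000}(b) is vacuously satisfied while its conclusion fails for a trivial action --- the lemma implicitly concerns a \emph{nontrivial} coprime automorphism, since its proof rests on the centralizer $C_{S_1}(\alpha)$ being a group of the same Lie type over a subfield --- so your appeal to it is a misapplication, and your argument yields no bound at all on the rank of $S_1$ precisely in the regular-action case that the lemma is supposed to cover.

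The missing idea is the paper's ``graph subgroup'' trick, which exploits an element of $A$ that \emph{moves} $S_1$ rather than one that stabilises it. If $H\le S_1$ is an arbitrary subgroup and $\alpha\in A$ satisfies $S_1^{\alpha}=S_j\ne S_1$, consider $K=\langle x^{-1}x^{\alpha}\mid x\in H\rangle\le S_1\times S_j$. Each generator $x^{-1}x^{\alpha}=[x,\alpha]$ lies in $I_G(A)$, so $K$ is $r$-generated by hypothesis; its projection onto $S_1$ is generated by $\{x^{-1}\mid x\in H\}$ and hence equals $H$, so $H$ is $r$-generated. This shows directly that the rank of $S_1$ is at most $r$, with no Sylow analysis, no use of Proposition~\ref{p-pcase}, and no assumption on stabilisers. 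If you replace your second step by this argument (your first step and the subadditivity of rank over the direct decomposition can stay as they are), the proof is complete.
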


\begin{proof}
Write $G=S_1\times\dots\times S_k$, where the $S_i$ are simple groups. Since the case $k=1$ was considered in Lemma \ref{l-1}, we assume that $k\geqslant2$.

First we note that $k$ is at most $r+1$. Indeed, say, for an involution $t\in S_1$ there are elements $\alpha_i\in A$ such that $t^{\alpha_i}\in S_i$ for all $i$. Then the rank of the subgroup $\langle [t,\alpha_i]\mid 1\leqslant i\leqslant k\rangle$ is at least $k-1$, so that $k-1\leqslant r$ by hypothesis.

Hence it is sufficient to show that the rank of $S_1$ is at most $r$, that is, every subgroup $H$ of $S_1$ can be generated by $r$ elements. Using the assumption $k\geqslant2$, we pick $\alpha\in A$ such that $S_1^{\alpha}=S_2$. Consider the subgroup $K\leqslant S_1\times S_2$ generated by all elements of the form $x^{-1}x^\alpha$, where $x\in H$. Since $K$ is generated by a subset of $I_G(A)$, it can be generated by $r$ elements. Hence $H$, which is the projection of $K$ onto $S_1$, can also be generated by $r$ elements.
\end{proof}

\begin{lemma}\label{l-3} Suppose that $G$ is semisimple. Then the rank of $G$ is $r$-bounded.
\end{lemma}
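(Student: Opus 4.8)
The plan is to reduce to the transitive case already settled in Lemma~\ref{l-2} and then bound the number of orbits. Write $G=S_1\times\dots\times S_k$ and group the simple factors into $A$-orbits, so that $G=H_1\times\dots\times H_t$, where each $H_j$ is the ($A$-invariant) product of the factors in one $A$-orbit and $A$ permutes those factors transitively. Since $G=[G,A]$ and the $H_j$ are $A$-invariant direct factors, $[G,A]=\prod_j[H_j,A]$ forces $H_j=[H_j,A]$ for every $j$. Thus each $H_j$ satisfies the hypotheses of Lemma~\ref{l-2}, so its rank is bounded by some $r$-bounded number $R(r)$. Because the rank of a direct product is at most the sum of the ranks of the factors, it will suffice to bound the number of orbits $t$ in terms of $r$; then $\operatorname{rank}(G)\le t\cdot R(r)$ is $r$-bounded.

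The heart of the argument is therefore to produce, in each $H_j$, an element $x_j\in I_G(A)\cap H_j$ of even order. I would split into two cases according to the orbit size. If the orbit has size $\ge 2$, pick an involution $t\in S$ in one factor $S$ of $H_j$ and an element $a\in A$ with $S^{a}\neq S$; then $x_j=[t,a]=t\cdot t^{a}$ is a product of two commuting involutions lying in distinct factors, hence an involution in $I_G(A)\cap H_j$. If the orbit has size $1$, then $H_j=S_j$ is a single simple group on which $A$ induces a cyclic group of field automorphisms $\langle\alpha_j\rangle$ (using the standard consequence of the classification quoted in the excerpt), with $[S_j,\langle\alpha_j\rangle]=[S_j,A]=S_j$ of even order. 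Choosing $a_j\in A$ inducing a generator and applying \cite[Theorem~1.2]{tams} in contrapositive form---if every commutator $[g,\alpha_j]$ were of odd order then $[S_j,\langle\alpha_j\rangle]$ would have odd order---yields some $g\in S_j$ with $x_j=[g,a_j]\in I_G(A)\cap S_j$ of even order.

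With these $x_j$ in hand the orbit count is immediate. The elements $x_1,\dots,x_t$ lie in distinct direct factors, so they commute and $L=\langle x_1,\dots,x_t\rangle=\prod_j\langle x_j\rangle$ is abelian; since each $\langle x_j\rangle$ has even order, $L$ contains an elementary abelian $2$-group of rank $t$ and hence has rank at least $t$. As $L$ is abelian, its minimal number of generators equals its rank, so $L$ needs at least $t$ generators; but $L$ is generated by the subset $\{x_1,\dots,x_t\}\subseteq I_G(A)$, so by hypothesis it is $r$-generated. Therefore $t\le r$, completing the reduction.

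The main obstacle is the size-$1$ orbit case: there the involution trick is unavailable, and one must guarantee an even-order element of $I_G(A)$ inside a single simple factor. This is precisely where \cite[Theorem~1.2]{tams} on odd-order commutators is essential, exploiting that a nonabelian simple group has even order. A secondary point to handle carefully is the passage from the ``$r$-generated'' hypothesis to a genuine rank bound on $t$, which works only because the relevant subgroup $L$ is abelian.
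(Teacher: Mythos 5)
Your proof is correct and follows essentially the same route as the paper: decompose $G$ into $A$-orbit products, bound the rank of each by Lemma~\ref{l-2} (with Lemma~\ref{l-1} for single factors), and bound the number of factors via even-order elements of $I_G(A)$ whose abelian span has $2$-rank equal to that number, using \cite[Theorem~1.2]{tams}. The only (harmless) deviation is that for orbits of size at least $2$ you produce the even-order commutator by the elementary involution trick $[t,a]=t\cdot t^a$, whereas the paper invokes \cite[Theorem~1.2]{tams} uniformly for all factors.
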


\begin{proof}
Since $G=[G,A]$, it follows that $G=G_1\times\dots\times G_m$, where each factor $G_i$ is either simple such that $G_i=[G_i,A]$ or a direct product, with at least two factors, of simple groups transitively permuted by $A$. By the two previous Lemmas~\ref{l-1} and \ref{l-2} the rank of each $G_i$ is $r$-bounded. Hence it remains to show that the number $m$ of such factors is also $r$-bounded. For every $i$ there is $\alpha_i\in A$ such that $[G_i, \alpha_i]\ne 1$. By \cite[Theorem~1.2]{tams} each subgroup $G_i$ has an element $g_i$ such that $[g_i, \alpha_i]$ has even order. The abelian subgroup $\langle [g_1, \alpha_1],\dots,[g_m, \alpha_m]\rangle$ has a Sylow 2-subgroup of rank $m$ and therefore it cannot be generated by less than $m$ elements. This subgroup is generated by a subset of $I_G(A)$, and therefore $m\leqslant r$.
\end{proof}

\begin{lemma}\label{l-444} Let $N$ be an $A$-invariant normal subgroup of $G$ which is a direct product $N=S_1\times\dots\times S_l$ of simple factors $S_i$. Then both $l$ and the rank of $N$ are $r$-bounded.
\end{lemma}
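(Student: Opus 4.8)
The plan is to separate the simple factors of $N$ into those on which $A$ acts non-trivially and those centralized by $A$, control the first kind via Lemma~\ref{l-3}, and control the second kind via the global hypothesis $G=[G,A]$ together with the permutation action of $GA$ on the set of factors $\Sigma=\{S_1,\dots,S_l\}$.

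First I would observe that the standing hypothesis is inherited by $A$-invariant subgroups: if $M$ is $A$-invariant then $I_M(A)\subseteq I_G(A)$, so every subgroup generated by a subset of $I_M(A)$ is $r$-generated. I apply this to $M=[N,A]$. Since $A$ permutes the factors and every normal subgroup of a direct product of non-abelian simple groups is a sub-product, we get $[N,A]=\prod_{i\in J}S_i$ for an $A$-invariant subset $J$; this is semisimple and satisfies $[N,A]=[[N,A],A]$ by coprimeness, so Lemma~\ref{l-3} applies to $[N,A]$ and gives that its rank, and in particular $|J|$, is $r$-bounded. Writing $N=[N,A]\times C$ with $C=\prod_{i\notin J}S_i$, the $A$-invariant complement $C$ maps isomorphically onto $N/[N,A]$ as an $A$-group, on which $A$ acts trivially; hence $C$ is precisely the product of the factors centralized by $A$, and $A$ fixes each of these factors as a point of $\Sigma$.

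The heart of the matter is to bound the factors in $C$, about which the $I_G(A)$-hypothesis says nothing directly; this is where I bring in $G=[G,A]$. Consider the permutation action of $GA$ on $\Sigma$ (note that $N$ lies in the kernel, so $\Sigma$ carries an action of the relevant quotients). If some $G$-orbit $O\subseteq\Sigma$ were contained entirely in $J^{c}$, then $N_O=\prod_{S_i\in O}S_i$ would be normal in $G$, $A$-invariant (as $A$ fixes $J^{c}$ pointwise), and contained in $C_G(A)$; Lemma~\ref{l-ker}(a) would then force $G=[G,A]$ to centralize $N_O$, i.e.\ $N_O\le Z(G)$, which is impossible for a non-trivial product of non-abelian simple groups. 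Therefore every $G$-orbit on $\Sigma$ meets $J$, and since the orbits are disjoint the number of $G$-orbits is at most $|J|$, hence $r$-bounded.

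It then remains to bound the size of each $G$-orbit. Let $P$ be the faithful image of $GA$ in $\operatorname{Sym}(\Sigma)$, and let $\bar G,\bar A$ be the images of $G,A$. Then $P=\bar G\bar A$ with $\bar G=[\bar G,\bar A]$ normal in $P$, the action is coprime, and $\bar A$ moves only the points of $\Sigma$ that $A$ moves, which all lie in $J$ and are thus $r$-boundedly many. Lemma~\ref{l-02} then bounds $|P|$, and hence $|\bar G|$, in terms of $r$, so each $G$-orbit has $r$-bounded size; combined with the $r$-bounded number of orbits this yields an $r$-bound for $l$. Finally, since every factor is $G$-conjugate to some $S_i$ with $i\in J$, all factors have rank at most $\operatorname{rank}([N,A])$, and $\operatorname{rank}(N)\le\sum_i\operatorname{rank}(S_i)\le l\cdot\operatorname{rank}([N,A])$ is $r$-bounded. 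The main obstacle is exactly the passage from the local commutator condition, which is silent on the $A$-centralized factors, to a global bound on them; it is overcome by the orbit argument of the third paragraph, which converts $G=[G,A]$ into the assertion that $A$ cannot leave an entire $G$-orbit of factors fixed.
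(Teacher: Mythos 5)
Your proof is correct and takes essentially the same route as the paper's: you bound the rank of $[N,A]$ via Lemma~\ref{l-3}, use $G=[G,A]$ together with Lemma~\ref{l-ker}(a) to show that no $G$-orbit of factors can consist entirely of $A$-centralized factors (so the number of orbits is $r$-bounded), and apply Lemma~\ref{l-02} to the faithful permutation action on the set of factors to bound the orbit lengths. Your explicit decomposition $N=[N,A]\times C$, with $J$ exactly the set of non-centralized factors, is merely a tidier packaging of the paper's count (via involutions and the Sylow $2$-rank of $[N,A]$) of the factors not contained in $C_G(A)$.
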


\begin{proof} By Lemma \ref{l-3} the rank of $[N,A]$ is $r$-bounded. Since all factors $S_i$ have even order and the rank of the Sylow $2$-subgroup of $[N,A]$ is $r$-bounded, it follows that only $r$-boundedly many, say $m$, of the subgroups $S_1,\dots,S_l$ are not contained in $C_G(A)$. On the other hand, since $G=[G,A]$, it follows by Lemma~\ref{l-ker}(a) that $C_N(A)$ cannot contain a nontrivial normal subgroup of $G$. Hence every simple factor among $S_1,\dots,S_l$ is conjugate in $G$ with a factor which is not centralized by $A$ and so each $S_i$ has $r$-bounded rank by Lemma \ref{l-3}. Thus, we only need to show that $l$ is $r$-bounded.

The group $GA$ naturally acts on the set $\Omega=\{S_1,\dots,S_l\}$ by conjugation. The above argument shows that the number of $GA$-orbits in this action is at most $m$. It is sufficient to show that each $GA$-orbit has $r$-bounded length. Let $K$ be the kernel of the action, that is, $K=\bigcap_i N_{GA}(S_i)$. By Lemma \ref{l-02} the order $|GA/K|$ is $m$-bounded. Since the length of each $GA$-orbit is at most $|GA/K|$, the result follows, since $m$ is $r$-bounded.
\end{proof}

\subsection{\boldmath The exponent of the automorphisms induced by $A$ on $G/F^*(G)$}

Let $G$ be a finite group admitting a group of coprime automorphisms $A$
such that any subset of $I_G(A)$ generates an $r$-generator subgroup. Recall that these hypotheses are inherited by $I_S(B)$ for any $B$-invariant section $S$ of $G$ for any subgroup $B\leqslant A$. The aim is to prove that $[G,A]$ has $r$-bounded rank. In this subsection we focus on the action of $A$ on the quotient by the generalized Fitting subgroup $F^*(G)$. Recall that $F^*(G)=F(G)E(G)$, where $F(G)$ is the Fitting subgroup, while $E(G)=Q_1*\dots *Q_k$, known as the layer, is the central product of all subnormal quasisimple subgroups $Q_i$, that is, perfect groups with non-abelian simple central quotients. Our aim in this subsection is a bound in terms of $r$ for the exponent of the automorphisms induced by $A$ on $G/F^*(G)$. As a consequence, we shall finish the proof of Theorem~\ref{main} for the case of cyclic $A$, which will be also used later.

\begin{lemma}\label{l-k1}
The rank of $[F^*(G),A]$ is $r$-bounded.
\end{lemma}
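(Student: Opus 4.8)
The plan is to prove that the rank of $[F^*(G),A]$ is $r$-bounded by decomposing $F^*(G)=F(G)E(G)$ and bounding the rank of the $A$-commutator on each of the two factors separately, then combining. For the Fitting subgroup $F(G)$, which is nilpotent, I would apply Corollary~\ref{c-nilp} directly: since $F(G)$ is $A$-invariant and nilpotent, and any subgroup generated by a subset of $I_{F(G)}(A)$ is $r$-generated (this hypothesis is inherited by the $A$-invariant subgroup $F(G)$), the rank of $[F(G),A]$ is $r$-bounded. For the layer $E(G)=Q_1*\dots*Q_k$, the key observation is that modulo its centre the layer is semisimple: $E(G)/Z(E(G))$ is a direct product of non-abelian simple groups, and it is $A$-invariant. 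Thus $[E(G)/Z(E(G)),A]$ has $r$-bounded rank by Lemma~\ref{l-3}, after passing to the quotient and using that the hypotheses on $I_G(A)$ pass to the section $E(G)/Z(E(G))$.

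The next step is to lift the rank bound on $E(G)/Z(E(G))$ back to $E(G)$ itself. Here I would control $Z(E(G))$, which is a central subgroup on which $A$ acts coprimely. Writing $\bar{E}=E(G)/Z(E(G))$, Lemma~\ref{l-444} applied to $\bar E$ (viewed as an $A$-invariant normal semisimple subgroup of $\bar G = G/Z(E(G))$) bounds both the number $k$ of quasisimple components and the rank of $\bar E$ in terms of $r$. A bound on $k$ together with a bound on the rank of each simple factor controls the rank of $Z(E(G))$, since the Schur multiplier of a finite simple group of bounded rank has bounded order and $Z(Q_i)$ embeds in the Schur multiplier of the corresponding simple group; the central product of $r$-boundedly many such centres has $r$-bounded rank. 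Combining the $r$-bounded rank of $[\bar E,A]$, the $r$-bounded rank of $Z(E(G))$, and the fact that $E(G)$ is a central extension of $\bar E$, one obtains an $r$-bounded rank for $[E(G),A]$.

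Finally, since $[F^*(G),A]\leq [F(G),A][E(G),A]$ (because $F^*(G)=F(G)E(G)$ and commutation with $A$ distributes over the product up to the usual commutator identities, noting $F(G)$ and $E(G)$ are both $A$-invariant normal subgroups), the rank of $[F^*(G),A]$ is bounded by the sum of the two $r$-bounded ranks, hence is $r$-bounded. The main obstacle I anticipate is the lifting step from $\bar E$ to $E(G)$: one must verify carefully that passing to the central quotient does not lose control, i.e. that the centre $Z(E(G))$ genuinely has $r$-bounded rank. This hinges on the fact that the simple factors have $r$-bounded rank (from Lemma~\ref{l-444}) so their Schur multipliers are of bounded order, and that there are only $r$-boundedly many of them; care is needed because the central product structure means $Z(E(G))$ need not be a direct product of the individual $Z(Q_i)$, but it is still a quotient of their direct product and hence of $r$-bounded rank.
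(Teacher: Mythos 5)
Your overall plan follows the same skeleton as the paper: decompose $F^*(G)=F(G)E(G)$, handle $F(G)$ by Corollary~\ref{c-nilp}, handle the layer via the semisimple results, and combine (the combination step is fine, since $[F(G),E(G)]=1$ gives $[F^*(G),A]=[F(G),A]\,[E(G),A]$ cleanly). However, there is a genuine gap in your treatment of the layer. You apply Lemma~\ref{l-444} to $\bar E=E(G)/Z(E(G))$ to bound the \emph{total} number $k$ of quasisimple components and the rank of \emph{all} of $\bar E$, and from that deduce that the whole centre $Z(E(G))$ has $r$-bounded rank. But Lemma~\ref{l-444} is proved under the standing assumption $G=[G,A]$ of the semisimple subsection (its proof crucially uses, via Lemma~\ref{l-ker}(a), that $C_N(A)$ contains no nontrivial normal subgroup of $G$), and this assumption is \emph{not} in force for Lemma~\ref{l-k1}: the subsection on the exponent of $A$ on $G/F^*(G)$ does not assume $G=[G,A]$, and the lemma is later applied (for instance in Corollary~\ref{c-cyclic}) to groups where $G\ne[G,A]$; nor is a reduction to $G=[G,A]$ immediate, since passing to $[G,A]$ changes $F^*$. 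Without that assumption your intermediate claims are simply false: if $A$ acts trivially on a central product of $n$ copies of $SL_2(5)$ with $n$ large, the hypothesis on $I_G(A)$ holds vacuously, yet the number of components and the rank of $Z(E(G))$ are both $n$. (The conclusion of the lemma survives there because $[F^*(G),A]=1$, but not via your argument.)

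The repair is exactly what the paper does: bound $Z([E(G),A])$ rather than $Z(E(G))$. By Lemma~\ref{l-ker}(b), a quasisimple component $Q_i$ with $[Q_i,A]\le Z(E(G))$ in fact satisfies $[Q_i,A]=1$, so only the components on which $A$ acts nontrivially contribute to $[E(G),A]$ and to its centre. The semisimple image of $[E(G),A]$ modulo the centre does satisfy $S=[S,A]$ by coprime action, so the argument of Lemma~\ref{l-444} legitimately bounds the number of \emph{involved} components in terms of $r$; and each quasisimple group has centre of rank at most $2$ by the classification, which bounds the rank of $Z([E(G),A])$ with no need for your Schur-multiplier detour (your claim that simple groups of $r$-bounded rank have multipliers of bounded order is true, but it is both heavier than needed and, in your setup, rests on the unavailable conclusion of Lemma~\ref{l-444}). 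With this restriction to the $A$-involved part, the rest of your outline goes through.
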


\begin{proof}
 The rank of $[F(G),A ]$ is $r$-bounded by Corollary~\ref{c-nilp}.
 The rank of $[E(G),A]/Z([E(G),A ])$ is $r$-bounded by Lemma~\ref{l-3}. The rank of $Z([E(G),A ])$ is also $r$-bounded. Indeed, this is a product of the centres of the quasisimple factors involved in $[E(G),A ]$, because if $[Q_i,A ]\leqslant Z(E(G))$ for a quasisimple factor $Q_i$, then $[Q_i,A ]=1$ by Lemma~\ref{l-ker}(b). The number of quasisimple factors involved in $[E(G),A ]$ is $r$-bounded by Lemma~\ref{l-444}, and each has centre of rank at most $2$, as known from the classification. Hence the result.
\end{proof}

Let $\alpha \in A$ be any element of $A$.

\begin{lemma}\label{l-k2}
The automorphism of $[E(G),\alpha]$ induced by $\alpha$ has $r$-bounded order.
\end{lemma}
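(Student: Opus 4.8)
The plan is to study the action of $\alpha$ on $H:=[E(G),\alpha]$ by first passing to the semisimple quotient $H/Z(H)$, where Lemma~\ref{l-autsem} applies directly, and then lifting the resulting bound back to $H$ through a centre of controlled size.

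First I would note that $H=[E(G),\alpha]\le[E(G),A]\le[F^*(G),A]$, so $H$ has $r$-bounded rank by Lemma~\ref{l-k1}; in particular $H$ is generated by an $r$-bounded number $d$ of elements. Since $H=[E(G),\langle\alpha\rangle]$ is normal in $E(G)$, its image modulo $Z(E(G))$ is a product of full simple factors of the semisimple group $E(G)/Z(E(G))$, so $\bar H:=H/Z(H)$ is semisimple and $Z(H)=H\cap Z(E(G))$. The group $\bar H$ has $r$-bounded rank, hence it is a direct product of at most $r$-boundedly many simple factors (each contains an involution, so the number of factors is at most the rank), each of $r$-bounded rank. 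Lemma~\ref{l-autsem} then shows that $\langle\alpha\rangle$ induces on $\bar H$ a group of coprime automorphisms of $r$-bounded order; equivalently, the image of $\alpha$ in $\mathrm{Aut}(\bar H)$ has $r$-bounded order, say dividing $N=N(r)$.

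It remains to bound the order of the restriction to $H$ of $\alpha^{N}$, which centralizes $\bar H$. Here the key input is that $|Z(H)|$ is $r$-bounded: $Z(H)=H\cap Z(E(G))$ is contained in the product of the centres $Z(Q_i)$ of the $r$-boundedly many quasisimple components $Q_i$ of $E(G)$ that survive in $H$, and each such $Z(Q_i)$, being a quotient of the Schur multiplier of a simple group of $r$-bounded rank, has $r$-bounded order as known from the classification. An automorphism of $H$ that is trivial on $\bar H$ sends each of the $d$ chosen generators $h$ of $H$ to $hz$ with $z\in Z(H)$, and is thereby determined; hence the kernel of the restriction map $\mathrm{Aut}(H)\to\mathrm{Aut}(\bar H)$ has order at most $|Z(H)|^{d}$, which is $r$-bounded. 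Since $\langle\alpha|_H\rangle$ meets this kernel in a subgroup of $r$-bounded order and maps into $\mathrm{Aut}(\bar H)$ with image of order dividing $N$, the order of $\alpha$ acting on $H$ is $r$-bounded.

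The step I expect to be the main obstacle is the passage from $\bar H$ back to $H$. Reducing modulo the centre and invoking Lemma~\ref{l-autsem} is routine, but the action of $\alpha$ on $Z(H)$ genuinely has to be controlled: for an abstract abelian group a coprime automorphism can have order far larger than its rank would suggest, so the rank bound on the centre coming from Lemma~\ref{l-k1} is by itself insufficient. What rescues the argument is the stronger, classification-based fact that the relevant centres are Schur-multiplier-type groups of simple groups of bounded rank and therefore have bounded \emph{order}; the delicate bookkeeping is to identify precisely which quasisimple components of $E(G)$ contribute to $H$ (namely those moved by $\alpha$) and to check that $Z(H)$ embeds in the product of their centres.
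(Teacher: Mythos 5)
Your proof is correct, and while it follows the same overall skeleton as the paper's --- pass to the semisimple quotient, bound the order of the induced automorphism there, then lift the bound back through the centre --- the two halves are handled differently. For the quotient step, the paper argues directly on $E(G)/Z(E(G))$, re-proving in situ exactly the content of Lemma~\ref{l-autsem}: orbits of $\al$ on the quasisimple components have length at most $r+1$ (the involution trick from Lemma~\ref{l-2}), and a stabilizer acting nontrivially on a fixed component induces a field automorphism of $L(q^f)$ with $f$ bounded via the rank bound of Lemma~\ref{l-k1}; your citation of Lemma~\ref{l-autsem} (which is available in the preliminaries) is a legitimate and cleaner packaging of the same mechanism, applied to $\bar H=[E(G),\al]/Z$ rather than to all of $E(G)/Z(E(G))$. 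The genuine divergence is the lifting step: the paper disposes of the centre in one line via Lemma~\ref{l-ker}(b) --- since $E(G)$ is perfect, $[E(G),\al^s]\le Z(E(G))$ already forces $[E(G),\al^s]=1$ --- with no control on $|Z|$ needed at all, whereas you bound $|Z(H)|$ by Schur-multiplier orders of simple groups of $r$-bounded rank and count the kernel of $\mathrm{Aut}(H)\to\mathrm{Aut}(\bar H)$ as having order at most $|Z(H)|^{d}$. Your route is sound (your structural claims check out: $H$ is normal in $E(G)$, hence mod $Z(E(G))$ a product of full simple factors, so $\bar H$ is centreless and $Z(H)=H\cap Z(E(G))$ embeds in the product of the centres of the $r$-boundedly many surviving components), and you correctly identify that the rank-$2$ bound on centres from Lemma~\ref{l-k1} would not suffice and that the bounded-\emph{order} multiplier fact is needed. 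But it costs you an extra classification input that the paper's argument avoids entirely; note also that you could have bypassed all the centre bookkeeping by observing that $H=[E(G),\al]$ is itself perfect (it is the central product of precisely the components not centralized by $\al$, since each such component $Q$ satisfies $QZ\le [E(G),\al]Z$ and hence $Q=[Q,Q]\le H$) and then invoking Lemma~\ref{l-ker}(b) for $H$ exactly as the paper does for $E(G)$.
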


\begin{proof}
The automorphism $\alpha$ permutes the quasisimple factors $Q_i$ in $E(G)=Q_1*\dots *Q_k$. As in the proof of Lemma~\ref{l-2}, the orbits have length at most $r+1$. If the stabilizer of a point $Q_i$ in $\langle \alpha\rangle$ does not centralize $Q_i$, then it induces a non-trivial coprime automorphism $\beta$ of the simple group $Q_i/Z(Q_i)$. Then this $Q_i/Z(Q_i)$ must be a group of Lie type, say, $L(q^f)$ over a field $\mathbb{F}_{q^f}$ for a prime $q$, and $\beta$ must be a field automorphism of order dividing~$f$. The rank of this $L(q^f)$ is $r$-bounded by Lemma~\ref{l-k1}. In particular, $f$ must be $r$-bounded, since $L(q^f)$ contains an elementary abelian $q$-subgroup of order $q^f$. We have proved that the automorphism induced by $\alpha$ on $E(G)/Z(E(G))$ has $r$-bounded order. This is also the order of the automorphism induced by $\alpha$ on $E(G)$, since if $[E(G),\alpha ^s]\leqslant Z(E(G))$, then $[E(G),\alpha ^s]=1$ by Lemma~\ref{l-ker}(b). Hence the result.
\end{proof}

\begin{proposition}\label{p-exp}
Let $G$ be a finite group admitting a group of coprime automorphisms $A$
such that any subset of $I_G(A)$ generates an $r$-generator subgroup. There is an $r$-bounded positive integer $n=n(r)$ such that $A^n$ acts trivially on $G/F^*(G)$. \end{proposition}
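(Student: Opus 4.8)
The plan is to bound, for each single $\al\in A$, the order of the automorphism that $\al$ induces on $\bar G:=G/F^*(G)$; since the exponent of a finite group equals the least common multiple of the orders of its elements, an $r$-bound on each such order yields an $r$-bound on the exponent of the image of $A$ in $\operatorname{Aut}(\bar G)$, and one then takes $n$ to be this exponent (so that $A^n$ acts trivially). The starting point is the standard fact $C_G(F^*(G))=Z(F^*(G))\leq F^*(G)$. Mapping $G$ into $\operatorname{Aut}(F(G))\times\operatorname{Aut}(E(G))$ by the two conjugation actions, the kernel is $C_G(F(G))\cap C_G(E(G))=C_G(F^*(G))=Z(F^*(G))$ and the image of $F^*(G)$ is $\operatorname{Inn}(F(G))\times\operatorname{Inn}(E(G))$ (because $[F(G),E(G)]=1$); hence there is an $A$-equivariant embedding $\bar G\hookrightarrow \operatorname{Out}(F(G))\times\operatorname{Out}(E(G))$. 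Thus it suffices to bound the order of $\al$ on the image of $\bar G$ in each of the two factors separately.

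The layer factor is straightforward. By Lemma~\ref{l-k2} the automorphism induced by $\al$ on $E(G)$ has $r$-bounded order, hence so does its image $\bar x\in\operatorname{Out}(E(G))$. Since $\al$ acts on the image of $\bar G$ in $\operatorname{Out}(E(G))$ by conjugation by $\bar x$, the order of this action divides the order of $\bar x$ and is therefore $r$-bounded.

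The real work is the Fitting factor. Writing $F(G)=\prod_q O_q(G)$ we have $\operatorname{Out}(F(G))=\prod_q\operatorname{Out}(O_q(G))$, and the order of $\al$ on the image $\bar G_F$ here is the least common multiple, over all primes $q$, of the orders $e_q$ of $\al$ on the image $\bar G_q\leq\operatorname{Out}(O_q(G))$. The crucial step I would establish is a \emph{uniform} bound $e_q\leq f(r)$, with $f$ depending only on $r$, not on $q$ nor on how many primes occur. Granting this, every prime dividing $\operatorname{lcm}_q e_q$ is at most $f(r)$ and every prime power contributing to it is at most $f(r)$, so $\operatorname{lcm}_q e_q$ is itself $r$-bounded however many $q$ are active. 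This is what dissolves the apparent difficulty that $G/F^*(G)$ may involve unboundedly many primes: one must \emph{not} try to bound the number of active primes (indeed $[F(G),A]=\prod_q[O_q(G),A]$ has $r$-bounded rank by Lemma~\ref{l-k1} only because ranks of coprime direct factors do not add, so the count is genuinely uncontrolled), but only the order contributed by each single $q$.

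To secure the uniform per-prime bound I would use coprimality decisively. Since $|\al|$ is coprime to $|G|$, the induced action of $\al$ on $\bar G_q$ is coprime, so its order equals its order on $[\bar G_q,\al]$, a subgroup generated by a subset of $I_{\bar G_q}(\al)$ and hence $r$-generated; moreover $[O_q(G),A]\leq[F(G),A]$ has $r$-bounded rank, so $\al$ fixes a subgroup of $r$-bounded corank in $O_q(G)/\Phi(O_q(G))$, i.e. acts on $O_q(G)$ as a coprime automorphism of ``$r$-bounded support''. The task is then to bound the order of such a small-support coprime automorphism on the subgroup $\bar G_q$ of $\operatorname{Out}(O_q(G))$ that it normalizes; for this I would pass to the chief factors of $G$ in a refinement of $1\le\Phi(O_q(G))\le O_q(G)$ and invoke the non-modular Hall--Higman estimates of Khukhro--Moens, together with Lemma~\ref{l-gmh} and the $p$-length bound of Lemma~\ref{l-rpl}, to convert a large order of $\al$ into a correspondingly large rank of $[O_q(G),A]$ or into a subset of $I_G(A)$ generating a subgroup of rank exceeding $r$. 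I expect this last estimate to be the main obstacle, and it is genuinely global: a coprime linear automorphism with bounded-rank commutator can by itself have arbitrarily large order (a Singer-type element is fixed-point-free of large order on a space of small dimension), so the bound cannot come from the local action on $O_q(G)$ alone — it is forced only by combining the coprimality of $|\al|$ with $|G|$ and the faithful action $C_G(F^*(G))\le F^*(G)$, so that an element of $\bar G_q$ carrying a long $\al$-orbit acts nontrivially on $O_q(G)$ and thereby manufactures the forbidden large-rank commutator subgroup. Once $e_q\le f(r)$ is established, the three factors combine to give $\al$ an $r$-bounded order on $\bar G$, and taking $n=n(r)$ to be the resulting exponent completes the proof.
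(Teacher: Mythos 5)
Your global architecture matches the paper's first reduction: bound, for each $\al\in A$, the order of the automorphism induced on $G/F^*(G)$; and your treatment of the layer via Lemma~\ref{l-k2}, as well as the $\al$-equivariant embedding of $\bar G$ into $\operatorname{Out}(F(G))\times\operatorname{Out}(E(G))$, is sound. But there is a genuine gap exactly where you flag it: the uniform per-prime bound $e_q\le f(r)$ is never established. Your paragraph about it is a program (``I would pass to the chief factors\dots I expect this last estimate to be the main obstacle''), not an argument, and this bound is the entire content of the proposition --- as you yourself observe with the Singer-element example, it cannot follow from the rank of $[O_q(G),A]$ or from any data internal to $O_q(G)$. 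So the proposal stops short of the theorem.

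Moreover, the route you sketch for closing the gap (chief factors in a refinement of $1\le\Phi(O_q(G))\le O_q(G)$, then Khukhro--Moens) would not work as stated, because Lemma~\ref{l-khu-moe} needs a coprime configuration: a nilpotent $s$-group $R$ acting \emph{faithfully} on a module whose characteristic avoids both $p$ and $s$, with $\al$ inducing on $R$ an automorphism of the full order $p^m$. Knowing only that $\al$ acts with order $p^m$ on $\bar G_q\le\operatorname{Out}(O_q(G))$ produces no such subgroup $R$: the order of $\al$ on $\bar G_q$ may be carried by elements of composite order, and $\bar G_q$ has no self-centralizing Fitting-type subgroup with which to localize the action. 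The paper manufactures the configuration by a cross-prime manoeuvre absent from your sketch: reduce first to $|\bar\al|=p^m$ a prime power and to $\Phi(F(G))=1$; apply Lemma~\ref{l-k2} twice (to $E(G)$ and to $E(G/F^*(G))$) so as to assume $\al^{p^{m-1}}$ centralizes both layers; then, since $\bar\al$ acts faithfully on $F^*(G/F^*(G))$ by Lemma~\ref{l-20}(c), it acts faithfully on $F(G/F^*(G))$, so there is a prime $s$ and a Sylow $s$-subgroup $S$ of $F(G/F^*(G))$ with $[S,\al^{p^{m-1}}]\ne 1$; lifting to an $s$-subgroup $\hat S\le G$ and using $C_{\hat S}(F(G)_{s'}E(G))\le F(G)$, one finds a \emph{second} prime $t\ne s$ such that $\tilde S=\hat S/C_{\hat S}(T)$ acts faithfully on $T=O_t(F(G))$ with $[\tilde S,\al^{p^{m-1}}]$ acting nontrivially. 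Only then does Lemma~\ref{l-khu-moe} apply, forcing the minimal polynomial of $\al$ on $T$ to have degree at least $p^m-p^{m-1}$, hence giving an abelian subgroup $[T,\al]$, generated by a subset of $I_G(\al)\subseteq I_G(A)$, of rank at least $p^m-p^{m-1}-1$; the hypothesis then yields $p^m-p^{m-1}-1\le r$, bounding $p^m$. Note also that with this prime-power reduction in hand the paper never needs your decomposition of $\operatorname{Out}(F(G))$ over the primes $q$ dividing $|F(G)|$: it treats each prime-power part of $\bar\al$ in a single stroke, which is precisely what dissolves the uniformity problem you were trying to solve prime by prime.
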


\begin{proof}
 In other words, we need to show that for any $\alpha \in A$ the order of the automorphism $\bar \alpha$ induced by $\alpha$ on $G/F^*(G)$ is $r$-bounded.
It is sufficient to bound in terms of $r$ the order of every Sylow $p$-subgroup of $\langle \bar \alpha\rangle$, since then there will be only few such primes $p$, and $|\bar \alpha |$ will be bounded in terms of $r$. Therefore, replacing $\langle \alpha\rangle$ with its Sylow $p$-subgroup, we can simply assume that $|\bar \alpha |=p^m$ for a prime $p$; then we can also assume that $\alpha$ is a $p$-element.

The generalized Fitting subgroup of the quotient $G/\Phi (F(G))$ by the Frattini subgroup of $F(G)$ is the image of $F^*(G)$. Therefore we can assume that $\Phi (F(G))=1$, so that $F(G)$ is a direct product of elementary abelian groups.

 The automorphism of $E(G)$ induced by $\alpha $ has $r$-bounded order by Lemma~\ref{l-k2}, so if $\alpha ^{p^{m-1}}$ does not centralize $E(G)$, then $|\bar \alpha |=p^m$ is $r$-bounded. For the same reason, if $\alpha $ induces an automorphism of $E(G/F^*(G))$ of order at least $p^m$, then $|\bar \alpha |=p^m$ is $r$-bounded by Lemma~\ref{l-k2} applied to $G/F^*(G)$ and its group of automorphisms induced by $A$. Therefore we can assume that $\alpha ^{p^{m-1}}$ centralizes both $E(G)$ and $E(G/F^*(G))$.

 Since $F^*(G/F^*(G))$ contains its centralizer in $G/F^*(G)$ and $\bar \alpha$ is a coprime automorphism, $\bar \alpha$ acts faithfully on $F^*(G/F^*(G))$ by Lemma~\ref{l-20}(c). Since $[E(G/F^*(G)), \alpha ^{p^{m-1}}]=1$ by our assumption, $\bar \alpha $ acts faithfully on $F(G/F^*(G))$. Hence there is a prime $s$ such that $\langle \bar \alpha \rangle$ acts faithfully on the Sylow $s$-subgroup $S$ of $F(G/F^*(G))$, so that $[S, \alpha ^{p^{m-1}}]\ne 1$.

Let $\hat S$ be an $s$-subgroup that is a preimage of $S$ in $G$. Note that $C_{\hat S}(F(G)_{s'}E(G))\leqslant F(G)$, where $F(G)_{s'}$ is the Hall $s'$-subgroup of $F(G)$, because $C_{\hat S}(F(G)_{s'}E(G))F(G)$ is a nilpotent normal subgroup. It follows that $[\hat S, \alpha ^{p^{m-1}}]$ acts nontrivially on $F(G)_{s'}E(G)$.

 Since $[E(G), \alpha ^{p^{m-1}}]=1$ by our assumption, $[\hat S, \alpha ^{p^{m-1}}]$ centralizes $E(G)$ by Lemma~\ref{l-ker}(a). Therefore there is a prime $t\ne s$ and the Sylow $t$-subgroup $T$ of $F(G)$ on which $[\hat S, \alpha ^{p^{m-1}}]$ acts non-trivially; this is the same action as that of $[S, \alpha ^{p^{m-1}}]$.
 Let $\tilde S=\hat S/C_{\hat S}(T)$. Then the semidirect product $\tilde S \langle \alpha\rangle$ naturally acts on the elementary abelian $t$-group~$T$ in such a way that $[\tilde S, \alpha ^{p^{m-1}}]$ acts non-trivially, and $\tilde S$ acts faithfully. We can regard $T$ as a vector space over $\mathbb{F}_t$ and an $\mathbb{F}_t\tilde S \langle \alpha\rangle$-module.

 This is a kind of ``non-modular Hall--Higman type'' configuration for the group $\tilde S\langle \alpha\rangle$ acting on the vector space~$T$. This is not quite the usual setting as in the classical papers with non-modular Hall--Higman--type theorems, because $\langle \alpha\rangle $ may not be faithful on $\tilde S$, that is, $\alpha $ may have order greater than $p^m=|\bar \alpha |$. But this new situation was analysed in the recent paper of E.~I.~Khukhro and W.~Moens \cite{khu-moe}, with a proof similar to the classical case.

\begin{lemma}[{\cite[Lemma~3.3(a)]{khu-moe}}]\label{l-khu-moe}
 Suppose that $q$ is a prime, $Q$ is a $q$-group, and $\langle\psi\rangle$ is a cyclic group of order $p^k$ for a prime $p\ne q$ acting (not necessarily faithfully) by automorphisms on $Q$ such that the induced automorphism of $Q$ has order $p^m$. Suppose that the semidirect product $Q\langle\psi\rangle$ acts by linear transformations on a vector space $V$ over a field $K$ whose characteristic does not divide $p\cdot q$, and suppose that $V$ is a faithful $KQ$-module.
Then the minimal polynomial of $\psi$ on $V$ has degree at least $p^m-p^{m-1}$.
\end{lemma}

Applying this lemma with $V=T$, $Q=\tilde S$, and $\psi= \alpha$ we obtain that
the minimum polynomial of $\alpha $ on $T$ has degree at least $p^m-p^{m-1}$. This degree is known to be the maximum dimension of the span of an orbit under $\langle \alpha \rangle$ on $T$; see, for example, \cite[Lemma~2.6(d)]{khu-moe}. Hence the dimension of $[T,\alpha ]$, which is the same as the rank of this abelian subgroup, is at least $p^m-p^{m-1}-1$. By hypothesis, $p^m-p^{m-1}-1\leqslant r$. Hence $p^m$ is bounded in terms of~$r$.
\end{proof}

We combine Proposition~\ref{p-exp} with one of the main results of \cite{tams} to prove Theorem~\ref{main} for the case of cyclic $A$.

\begin{corollary}\label{c-cyclic}
Let $G$ be a finite group admitting a coprime automorphism $\alpha$
such that any subset of $I_G(\langle\alpha\rangle )$ generates an $r$-generator subgroup. Then the rank of $[G,\alpha ]$ is $r$-bounded.
\end{corollary}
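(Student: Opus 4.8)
The plan is to reduce to the case $G=[G,\al]$ and then bound the rank of $G$ itself. This reduction is harmless: the subgroup $H=[G,\al]$ is $\al$-invariant and, since $\al$ acts coprimely, satisfies $[H,\al]=[G,\al,\al]=[G,\al]=H$; moreover $I_H(\al)\subseteq I_G(\al)$, so $H$ inherits the hypothesis, and the rank of $[G,\al]$ equals the rank of $H=[H,\al]$. I should also recall that, by the remark preceding the statement, the hypothesis on subsets of $I_G(\al)$ is equivalent to the corresponding hypothesis on subsets of $I_G(\langle\al\rangle)$, so that Proposition~\ref{p-exp} and Lemma~\ref{l-k1}, stated for $A=\langle\al\rangle$, are available.

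The key step is to use Proposition~\ref{p-exp} to manufacture an $r$-bounded exponent. By that proposition there is an $r$-bounded $n=n(r)$ such that $\al^n$ acts trivially on $G/F^*(G)$; hence $[G,\al^n]\leq F^*(G)$. I claim the normal $\al$-invariant subgroup $N:=[G,\al^n]$ has $r$-bounded rank. Indeed, since $\langle\al^n\rangle$ acts coprimely we have $N=[N,\al^n]=[[G,\al^n],\al^n]$, and because $N\leq F^*(G)$ this yields $N\leq[F^*(G),\al^n]\leq[F^*(G),\langle\al\rangle]$. The rank of $[F^*(G),\langle\al\rangle]$ is $r$-bounded by Lemma~\ref{l-k1}, and the claim follows.

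To finish, I pass to the quotient $\bar G=G/N$. By construction $\al^n$ centralizes $\bar G$, so the automorphism $\bar\al$ induced by $\al$ has order dividing $n$, which is $r$-bounded. The quotient $\bar G$ is $\bar\al$-invariant, $\bar\al$ is a coprime automorphism, and by Lemma~\ref{l-IGA}(a) the hypothesis passes to $\bar G$; hence \cite[Theorem~1.1]{tams} applies and bounds the rank of $[\bar G,\bar\al]$ in terms of $r$ and $|\bar\al|$. Since $|\bar\al|\leq n(r)$ is $r$-bounded, the rank of $[\bar G,\bar\al]$ is in fact $r$-bounded; and $[\bar G,\bar\al]=[G,\al]N/N=G/N=\bar G$ because $G=[G,\al]$. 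Thus both $N$ and $G/N$ have $r$-bounded rank, and since the rank of $G$ is at most the sum of the ranks of $N$ and $G/N$, the rank of $G$ is $r$-bounded, as required.

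The main obstacle, and the reason the argument must route through Proposition~\ref{p-exp} rather than appeal to \cite[Theorem~1.1]{tams} directly, is that a priori $|\al|$ is unbounded, so the order-dependent bound of \cite{tams} is useless on $G$ as it stands. The device that resolves this is the bounded-rank normal subgroup $N=[G,\al^n]$: modulo $N$ the automorphism $\al$ acts with $r$-bounded order, which is exactly the situation in which the $|\al|$-dependence of \cite[Theorem~1.1]{tams} collapses to an $r$-dependence. The one point demanding care is the verification that $N$ itself has $r$-bounded rank, which rests on the inclusion $[G,\al^n]\leq F^*(G)$ coming from Proposition~\ref{p-exp} together with the coprime identity $[G,\al^n]=[[G,\al^n],\al^n]$.
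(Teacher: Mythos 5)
Your proof is correct and takes essentially the same route as the paper's: Proposition~\ref{p-exp} supplies the $r$-bounded $n$ with $[G,\al^n]\leq F^*(G)$, Lemma~\ref{l-k1} bounds the rank of this normal $\al$-invariant subgroup, and \cite[Theorem~1.1]{tams} handles the quotient, on which $\al$ induces an automorphism of order dividing $n$. The only cosmetic differences are your initial reduction to $G=[G,\al]$ and your use of the coprime identity $[G,\al^n]=[[G,\al^n],\al^n]$ to get $[G,\al^n]\leq[F^*(G),\al^n]$, where the paper instead uses the factorization $G=C_G(\al^n)F^*(G)$.
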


\begin{proof}
By Proposition~\ref{p-exp} there is an $r$-bounded positive integer $n=n(r)$ such that $[G,\alpha^n]\leqslant F^*(G)$. Since $\alpha$ is a coprime automorphism, we
have $G=C_G(\alpha ^n)F^*(G)$, so that $[F^*(G),\alpha^n]=[G,\alpha ^n]$. Hence $[F^*(G),\alpha^n]$ is a normal $\alpha$-invariant subgroup. The order of the automorphism of $G/[F^*(G),\alpha^n]$ induced by $\alpha$ divides $n$ and so is $r$-bounded. By \cite[Theorem~1.1]{tams} the rank of $[G,\alpha]/[F^*(G),\alpha^n]$ is bounded in terms of $n$ an $r$ and therefore is $r$-bounded. The rank of $[F^*(G),\alpha^n]$ is $r$-bounded by Lemma~\ref{l-k1}. As a result, the rank of $[G,\alpha]$ is also $r$-bounded.
\end{proof}

\subsection{The case of soluble groups}

First we derive a useful corollary on the Fitting height of a soluble group satisfying the hypothesis of Theorem~\ref{main}.

\begin{corollary}\label{c-sol-f}
Suppose that $G$ is a finite soluble group admitting a group of coprime automorphisms $A$ such that any subgroup generated by a subset of $I_G(A)$ can be generated by $r$ elements. Then $[G,A]$ has $r$-bounded Fitting height.
\end{corollary}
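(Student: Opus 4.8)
The plan is to reduce the Fitting height of $[G,A]$ to the cyclic case already established in Corollary~\ref{c-cyclic}, exploiting the fact that $[G,A]$ is generated by the family of its normal subgroups $[G,\al]$ with $\al\in A$. Set $H=[G,A]$. This is a soluble, $A$-invariant subgroup with $[H,A]=H$ by coprimeness, and since $I_H(A)\subseteq I_G(A)$ it inherits the rank hypothesis. For every single $\al\in A$ we have $I_H(\al)\subseteq I_H(A)\subseteq I_G(A)$, so each $\al$ satisfies the hypothesis of Corollary~\ref{c-cyclic}; hence $[H,\al]=[H,\langle\al\rangle]$ has $r$-bounded rank. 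Being a finite soluble group of $r$-bounded rank, $[H,\al]$ then has $r$-bounded Fitting height by Zassenhaus' theorem (as recalled in the introduction, a finite soluble group of rank $\rho$ has $\rho$-bounded Fitting height). Thus there is an $r$-bounded number $g=g(r)$ with $h([H,\al])\leq g$ for all $\al\in A$; each $[H,\al]$ is normal in $H$ (it is normal in $G$ and contained in $H$), and $H=\langle\,[H,\al]\mid\al\in A\,\rangle$.

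It then remains to show the purely group-theoretic fact that a soluble group generated by normal subgroups of Fitting height at most $g$ itself has Fitting height at most $g$. I would prove this through the claim that the class of soluble groups of Fitting height $\leq g$ is closed under products of normal subgroups, arguing by induction on $g$. The case $g=1$ is Fitting's theorem that a product of normal nilpotent subgroups is nilpotent. For the inductive step, suppose $M,N\trianglelefteq K=MN$ with $h(M),h(N)\leq g$. Then $M\cap F(K)$ is a normal nilpotent subgroup of $M$, so $F(M)=M\cap F(K)$, and similarly $F(N)=N\cap F(K)$; consequently $K/F(K)$ is the product of the two normal subgroups $MF(K)/F(K)\cong M/F(M)$ and $NF(K)/F(K)\cong N/F(N)$, each of Fitting height $\leq g-1$. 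The inductive hypothesis yields $h(K/F(K))\leq g-1$, whence $h(K)\leq g$. Since $H$ is finite, building up the product of all its normal subgroups of Fitting height $\leq g$ two at a time produces a single normal subgroup of Fitting height $\leq g$ that contains every $[H,\al]$; this subgroup therefore equals $H$, giving $h(H)\leq g$, which is $r$-bounded.

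The coprime identities and the inheritance of the rank hypothesis by $H$ and by the individual cyclic subgroups are routine, and the heavy lifting has already been done inside Corollary~\ref{c-cyclic}. The one genuinely structural ingredient I expect to be the main point is the closure of bounded–Fitting–height soluble groups under products of normal subgroups: this is what lets me pass from control of each single $[H,\al]$ to control of the whole join $H$ \emph{without} any dependence on the (possibly large) number of distinct generators $[H,\al]$. Getting that induction on $g$ right, anchored at Fitting's theorem, is the crux; everything else follows immediately from the cyclic case together with Zassenhaus' rank-to-Fitting-height bound.
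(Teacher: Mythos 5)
Your proposal is correct and follows essentially the same route as the paper: decompose $[G,A]$ as the product of the normal subgroups $[G,\al]$, $\al\in A$, bound the rank of each factor by Corollary~\ref{c-cyclic}, convert rank to Fitting height via the known bound for soluble groups, and conclude for the product --- the paper cites the rank-to-height bound and simply asserts the closure of bounded Fitting height under products of normal subgroups, whereas you prove that closure by a (correct) induction on $g$ using $F(M)=M\cap F(K)$, anchored at Fitting's theorem. One small caveat: $[H,\al]$ need not be normal in $G$ as your parenthetical claims, but it is normal in $H$ (via the identity $[hx,\al]=[h,\al]^x[x,\al]$ for $h,x\in H$), which is all your argument actually uses.
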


\begin{proof}
Clearly, $[G,A]=\prod_{\alpha\in A}[G,\alpha ]$. Every subgroup $[G,\alpha ]$ has $r$-bounded rank by Corollary~\ref{c-cyclic}. The Fitting height of a finite soluble group is known to be bounded in terms of its rank; see for example, \cite[Lemma~2.4]{glasgow}. Therefore each of the normal subgroups $[G,\alpha ]$ has $r$-bounded Fitting height. Hence so does their product.
\end{proof}

We now prove Theorem~\ref{main} in the case of soluble groups.

\begin{proposition}\label{p-sol}
Suppose that $G$ is a finite soluble group admitting a group of coprime automorphisms $A$ such that, for a positive integer $r$, any subgroup generated by a subset of $I_G(A)$ can be generated by $r$ elements. Then $[G,A]$ has $r$-bounded rank.
\end{proposition}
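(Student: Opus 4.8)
The plan is to reduce to the case $G=[G,A]$ (this is harmless since $[[G,A],A]=[G,A]$ for coprime actions and the hypothesis passes to $[G,A]$), and then to bound $\operatorname{rank}(G)$ using the two facts already secured in this subsection: by Corollary~\ref{c-sol-f} the Fitting height $h$ of $G$ is $r$-bounded, and by Proposition~\ref{p-exp} some $r$-bounded power $A^{n}$ acts trivially on $G/F(G)$ (here $F^{*}(G)=F(G)$ because $G$ is soluble). If $G$ is nilpotent the claim is Corollary~\ref{c-nilp}, which serves as the base of an induction on $h$; for the inductive step $G/F(G)$ inherits the hypothesis and has Fitting height $h-1$, so it has $r$-bounded rank, and the whole difficulty is concentrated in bounding $\operatorname{rank}(F(G))$, equivalently $\operatorname{rank}(O_{p}(G))$ for each prime $p$. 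Writing $O_{p}(G)=[O_{p}(G),A]\,C_{O_{p}(G)}(A)$, the first factor is nilpotent with $[[O_{p}(G),A],A]=[O_{p}(G),A]$ and hence has $r$-bounded rank by Corollary~\ref{c-nilp}; so everything comes down to bounding the $A$-fixed part $C_{O_{p}(G)}(A)$, the object dual to the commutators that the hypothesis controls.

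To attack the fixed part I follow the route indicated for Theorem~\ref{main}: combine the exponent bound with the cyclic case through an orbit reduction. The key observation is that for $x=[g,\alpha]\in I_{G}(A)$ and $\beta\in A$ one has $x^{\beta}=[g^{\beta},\alpha^{\beta}]\in I_{G}(A)$, so an entire $A$-orbit of a commutator again lies in $I_{G}(A)$ and therefore $\langle x^{A}\rangle$ is $r$-generated. The plan is to prove that $[G,A]$ is generated by an $r$-bounded number of such orbits. Granting this, and using that $A^{n}$ is trivial on $G/F(G)$ together with Corollary~\ref{c-cyclic} for the individual automorphisms, one reduces to the situation in which $A$ is a $p$-group of $r$-bounded derived length, and then argues by induction on that derived length.

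For the fixed-point estimates at the heart of this induction I would pass to the Frattini quotient $V=O_{p}(G)/\Phi(O_{p}(G))$, an $\F_{p}[G/F(G)]$-module carrying a coprime action of $A$, and bound $\dim C_{V}(A)$. The moved part $\dim[V,A]$ is already $r$-bounded, and the representation-theoretic input of Lemma~\ref{l-khu-moe} (a non-modular Hall--Higman estimate) and Theorem~\ref{t-HI} (Hartley--Isaacs) is what converts a hypothetical large fixed part into either a commutator with minimal polynomial of large degree, contradicting Corollary~\ref{c-cyclic}, or a module of small dimension; the role of the reduction to a $p$-group is precisely to make these estimates depend on the $r$-bounded exponent rather than on $|A|$.

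The main obstacle, and the reason the argument cannot be short, is exactly this demand that every bound depend on $r$ only and never on $|A|$, which may be arbitrarily large. A direct use of the Hartley--Isaacs constant $\varepsilon(A)$ (as in Lemma~\ref{l-02}) is therefore not admissible, and this forces both the orbit reduction, which caps the number of automorphisms actually needed to generate $[G,A]$, and the passage to $A$ a $p$-group of bounded derived length, where Lemma~\ref{l-khu-moe} supplies degree bounds tied to $r$. A secondary technical point is that for a $p$-group the sectional rank is not governed by the dimension of a single Frattini quotient, so a bound on $\dim C_{V}(A)$ does not by itself bound $\operatorname{rank}(C_{O_{p}(G)}(A))$; this gap is to be closed exactly as in Proposition~\ref{p-pcase}, by locating an $A$-invariant powerful subgroup with an $r$-bounded number of generators inside a suitable term of the lower central series and controlling the nilpotency class of the quotient via Lemma~\ref{l-22}.
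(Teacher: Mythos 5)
Your skeleton --- reduce to $G=[G,A]$, induct on the Fitting height, which is $r$-bounded by Corollary~\ref{c-sol-f}, take Corollary~\ref{c-nilp} as the base, and concentrate the inductive step on bounding the rank of the $p$-part of the Fitting subgroup --- coincides with the paper's. The gap is in the heart of that step. After splitting $O_p(G)=[O_p(G),A]\,C_{O_p(G)}(A)$ and bounding the first factor (correctly, via Corollary~\ref{c-nilp}), you undertake to bound the fixed part $C_{O_p(G)}(A)$ directly, by an unproved ``orbit generation'' claim, a reduction to $A$ a $p$-group of $r$-bounded derived length, and representation-theoretic estimates --- none of which is carried out, and the tools you name point the wrong way. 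Lemma~\ref{l-khu-moe} yields \emph{lower} bounds on the degree of the minimal polynomial, hence on the rank of commutator subspaces $[V,\alpha]$ (this is exactly how it is used in Proposition~\ref{p-exp}); it gives no upper bound on $\dim C_V(A)$. Theorem~\ref{t-HI} carries the constant $\varepsilon(A)$ depending on $|A|$, which you yourself observe is inadmissible here, yet your sketch still leans on it. In general a large fixed space is perfectly compatible with a small $[V,A]$; what excludes it must come from the global hypothesis $G=[G,A]$, and your proposal supplies no mechanism for that. (The orbit-reduction plan you cite comes from the expository outline at the start of Section~3; it is not what the paper's actual proofs do, in the soluble case or elsewhere.)

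The paper's inductive step avoids estimating fixed points altogether. Working in $\bar G=G/O_{p'}(G)$ with $P=O_p(\bar G)$, it argues: (i) $[P,A]$ has $r$-bounded rank by Corollary~\ref{c-nilp}, so its Frattini quotient is an $\F_p$-space of $r$-bounded dimension; (ii) by coprimality $C_A([P,A])=C_A(P)$, so the $p'$-group $A/C_A(P)$ acts faithfully on that space and therefore has $r$-bounded rank (\cite[Lemma~2.3]{glasgow}); (iii) since $C_{\bar G}(P)\leq P$ (by $p$-solubility with $O_{p'}(\bar G)=1$), a short coprime-commutator computation gives $[\bar G,C_A(P)]=1$, so the action of $A$ on $\bar G$ factors through $\bar A=A/C_A(P)$, which is generated by $r$-boundedly many elements $a_1,\dots,a_{f(r)}$; (iv) then $[\bar G,A]=\prod_{i=1}^{f(r)}[\bar G,a_i]$, and Corollary~\ref{c-cyclic} bounds the rank of each factor, hence of $\bar G=[\bar G,A]$ and in particular of $P$ --- fixed points of $A$ included, without their ever being estimated. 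Steps (ii) and (iii), the passage from the bounded rank of $[P,A]$ to a bounded rank for the \emph{effective} acting group and the triviality of the kernel's action on all of $\bar G$, are precisely the ideas missing from your proposal; once they are in place, the only heavy input needed is the cyclic case Corollary~\ref{c-cyclic}, and your projected reduction to $A$ a $p$-group of bounded derived length with a new induction is unnecessary.
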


\begin{proof}
We can obviously assume that $G=[G,A]$.
The Fitting height of $G$ is $r$-bounded by Corollary~\ref{c-sol-f}; therefore we can proceed by induction on the Fitting height of $G$. At the base of induction the group $G=[G,A]$ is nilpotent and then the rank of $G$ is $r$-bounded by Corollary~\ref{c-nilp}. Now suppose that the Fitting height of $G$ is greater than 1. The rank of $G/F(G)$ is $r$-bounded by the induction hypothesis, and it remains to show that the rank of $F(G)$ is $r$-bounded. The rank of $F(G)$ is equal to the rank of some Sylow $p$-subgroup of $F(G)$. This Sylow $p$-subgroup of $F(G)$ is isomorphic to a subgroup of $P=O_{p',p}(G)/O_{p'}(G)$, so it is sufficient to prove that the rank of $P$ is $r$-bounded.

Let $\bar G=G/O_{p'}(G)$. The subgroup $[P,A]$ has $r$-bounded rank by Corollary~\ref{c-nilp}. Hence the group $A/C_A([P,A])$ has $r$-bounded rank as a $p'$-group faithfully acting on the Frattini quotient of $[P,A]$ regarded as a vector space over $\mathbb{F}_p$ of $r$-bounded dimension (see, for example, \cite[Lemma~2.3]{glasgow}). Due to coprime action we have $C_A([P,A])=C_A(P)$. Then $[\bar G,C_A(P)]$ centralizes $P$ by Lemma~\ref{l-ker}(a). But $P$ contains its centralizer, so $[\bar G,C_A(P)]\leqslant P$, whence $[[\bar G,C_A(P)], C_A(P)]\leqslant [P,C_A(P]=1$ and therefore, $[\bar G,C_A(P)]=1$ due to coprime action. As a result, the group $\bar G$ admits the group of coprime automorphisms $\bar A=A/C_A(P)$, which has $r$-bounded rank and, in particular, is generated by $r$-bounded number of elements $a_1,\dots,a_{f(r)}$. By Corollary~\ref{c-cyclic} every subgroup $[\bar G, a_i]$ has $r$-bounded rank. Since
$$[\bar G,A]=\prod_{i=1}^{f(r)}[\bar G,a_i],$$
it follows that $\bar G=[\bar G,A]$ has $r$-bounded rank, so in particular, $P$ has $r$-bounded rank.
\end{proof}

\subsection{Completion of the proof of Theorem~\ref{main}}

Let $G$ be a finite group admitting a group of coprime automorphisms $A$ such that any subset of $I_G(A)$ generates an $r$-generator subgroup. We wish to prove that $[G,A]$ has $r$-bounded rank. First we show that we can pass to a subgroup of $r$-bounded index.

\begin{lemma}\label{l-g2} Suppose that $G$ has an $A$-invariant subgroup $H$ such that the index $|G:H|$ and the rank of $[H,A]$ are both $r$-bounded. Then the rank of $[G,A]$ is $r$-bounded.
\end{lemma}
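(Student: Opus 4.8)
The plan is to reduce to the case where $H$ is normal, then to kill $[H,A]$ by passing to a quotient in which the image of $H$ becomes central in the relevant subgroup, and finally to exploit the fact that the abelianization of $[\bar G,A]$ inherits the generation hypothesis. First I would replace $H$ by its normal core $H_0=\bigcap_{g\in G}H^g$. Since $H$ is $A$-invariant and $A$ permutes the conjugates $H^g$, the subgroup $H_0$ is again $A$-invariant; it is normal in $G$, and $G/H_0$ embeds in the symmetric group on the cosets of $H$, so $|G:H_0|$ is $r$-bounded. As $[H_0,A]\leq[H,A]$ and the rank of a subgroup does not exceed that of the ambient group, $[H_0,A]$ still has $r$-bounded rank. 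Thus I may assume $H\trianglelefteq G$ from the outset.

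The key step is to bound the rank of $N:=\langle[H,A]^G\rangle$, the normal closure of $[H,A]$ in $G$ (which is $A$-invariant and normal in $G$). The point is that although $N$ is generated by $r$-boundedly many conjugates of $[H,A]$ -- their number being at most $|G:N_G([H,A])|\leq|G:H|$, since $H$ normalizes $[H,A]$ -- mere boundedness of the number of generators does \emph{not} bound the rank. Instead I use that each conjugate $[H,A]^g$ is normal in $H$ (because $[H,A]\trianglelefteq H\trianglelefteq G$) and contained in $H$, so $N\leq H$ is a product of $r$-boundedly many normal subgroups of $H$, each of rank equal to the rank of $[H,A]$. For a product of normal subgroups the rank is at most the sum of the ranks, so the rank of $N$ is $r$-bounded.

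Finally I would pass to $\bar G=G/N$ and set $W=[\bar G,A]$. Since $[H,A]\leq N$, the image $\bar H=H/N$ lies in $C_{\bar G}(A)$; as $\bar H\trianglelefteq\bar G$, Lemma~\ref{l-ker}(a) shows that $W$ centralizes $\bar H$. Hence $W\cap\bar H\leq Z(W)$, while $W/(W\cap\bar H)$ embeds in $\bar G/\bar H\cong G/H$, which has $r$-bounded order. Schur's theorem then makes $W'$ of $r$-bounded order, and in particular of $r$-bounded rank. On the other hand $W=[W,A]$ by coprimeness, so $W/W'=[W/W',A]$ is abelian; since the generation hypothesis is inherited by the section $W/W'$, the full set $I_{W/W'}(A)$ generates $W/W'$, which is therefore $r$-generated and, being abelian, has rank at most $r$. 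Consequently the rank of $W$ is at most the rank of $W'$ plus the rank of $W/W'$, hence $r$-bounded.

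Putting this together, $[G,A]N/N=[\bar G,A]=W$ gives $[G,A]/([G,A]\cap N)\cong W$, so the rank of $[G,A]$ is at most the rank of $N$ plus the rank of $W$, both $r$-bounded, which is the desired conclusion. The main obstacle is precisely the rank bound for $N$: one must avoid the trap of only bounding its number of generators and instead use the normality in $H$ of the conjugates of $[H,A]$ to invoke the additivity of rank over products of normal subgroups.
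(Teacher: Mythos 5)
Your proof is correct and follows essentially the same route as the paper's: reduction to the $A$-invariant normal core, bounding the rank of the normal closure $\langle[H,A]^G\rangle$ as a product of $r$-boundedly many normal subgroups of $H$ each of bounded rank, and then centrality of the image of $H$ in the quotient, Schur's theorem, and the $r$-generation of the abelianization via the hypothesis on $I_G(A)$. The only cosmetic difference is that the paper first replaces $H$ by $H\cap[G,A]$ so as to assume $G=[G,A]$ (making the image of $H$ central in the whole quotient), whereas you keep $G$ general, work with $W=[\bar G,A]$, and invoke coprimality to get $W=[W,A]$; the substance is identical.
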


\begin{proof} Replacing $H$ with $H\cap [G,A]$ we can assume that $G=[G,A]$. The normal core $\bigcap_{g\in G}H^g$ of $H$ is $A$-invariant and has index at most $|G:H|!$. Therefore we can assume from the outset that $H$ is a normal subgroup. Since the index $|G:H|$ is $r$-bounded, the number of conjugates of the subgroup $[H,A]$ is $r$-bounded. All these conjugates are normal subgroups of $H$ and have $r$-bounded rank and therefore their product, which is the normal closure $[H,A]^G$, has $r$-bounded rank. Hence it suffices to prove that the rank of $G/[H,A]^G$ is $r$-bounded. The image of $H$ in $G/[H,A]^G$ is centralized by $A$ and therefore is contained in the centre of $G/[H,A]^G$ by Lemma~\ref{l-ker}(a), since $G=[G,A]$ by assumption. Hence $G/[H,A]^G$ has centre of $r$-bounded index. By Schur's theorem \cite[Theorem 4.12]{rob} the derived subgroup of $G/[H,A]^G$ has $r$-bounded order. The rank of the abelian derived quotient of $G=[G,A]$ is $r$-bounded by the hypothesis that $I_G(A)$ generates an $r$-generator subgroup.
\end{proof}

\begin{lemma}\label{l-g3} The subgroup $[G,A]$ has an $A$-invariant soluble-by-semisimple-by-soluble subgroup $H$ of $r$-bounded index.
\end{lemma}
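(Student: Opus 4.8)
The plan is to exhibit $H$ as the preimage, inside the quotient of $[G,A]=G$ by its soluble radical, of the full stabiliser of the set of simple socle factors, and to control the number of those factors by Lemma~\ref{l-444}. The underlying point is simply that \emph{every} finite group is soluble-by-semisimple-by-(soluble-by-permutation), so the only thing requiring the hypothesis is a bound on the size of the permutation part.

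First I would pass to the quotient $\bar G=G/R$, where $R$ is the soluble radical of $G$; this $R$ is characteristic, hence $A$-invariant, and $\bar G=[\bar G,A]$ inherits the hypothesis that any subset of $I_{\bar G}(A)$ generates an $r$-generator subgroup (as recalled at the start of this subsection, the hypotheses are inherited by sections). Since $F(\bar G)=1$, the generalised Fitting subgroup $\bar E:=F^*(\bar G)=E(\bar G)$ is semisimple, a direct product $\bar E=\bar S_1\times\dots\times\bar S_m$ of non-abelian simple groups, and it is self-centralising, so $\bar G$ embeds into $\mathrm{Aut}(\bar E)$ and permutes the factors $\bar S_i$. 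Because $\bar E$ is an $A$-invariant semisimple normal subgroup of $\bar G$, Lemma~\ref{l-444} applies and shows that the number $m$ of factors is $r$-bounded.

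Next I would take $\bar N$ to be the kernel of the permutation action of $\bar G$ on $\{\bar S_1,\dots,\bar S_m\}$, so that $|\bar G:\bar N|\le m!$ is $r$-bounded. The subgroup $\bar N$ is $A$-invariant: since $A$ permutes the factors $\bar S_i$ compatibly with the action of $\bar G$ (the same joint-stabiliser computation as in the proof of Lemma~\ref{l-444}, where $GA$ acts on $\{S_1,\dots,S_l\}$), conjugation by $A$ preserves the joint stabiliser of the factors. As $\bar N$ normalises each $\bar S_i$, it induces a homomorphism into $\prod_i\mathrm{Out}(\bar S_i)$ whose kernel contains $\bar E$; by the well-known corollary of the classification that $\mathrm{Out}(S)$ is soluble for every finite simple $S$, the quotient $\bar N/\bar E$ is soluble. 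Thus $\bar N$ has the normal series $1\trianglelefteq\bar E\trianglelefteq\bar N$ with $\bar E$ semisimple and $\bar N/\bar E$ soluble.

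Finally I would let $H$ be the preimage of $\bar N$ in $G$. Then $H$ is $A$-invariant, $|G:H|=|\bar G:\bar N|\le m!$ is $r$-bounded, and $H$ carries the normal series $1\trianglelefteq R\trianglelefteq E^{*}\trianglelefteq H$, where $E^{*}$ is the preimage of $\bar E$, with successive factors soluble ($R$), semisimple ($\bar E$) and soluble ($\bar N/\bar E$); that is, $H$ is soluble-by-semisimple-by-soluble, as required. The single non-formal ingredient is the bound on $m$, which is exactly the application of Lemma~\ref{l-444} to the socle of $G/R$; this is the step where the rank hypothesis on $I_G(A)$ is actually used, everything else being the general structure of finite groups together with Schreier's conjecture. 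The main obstacle to watch is ensuring $A$-invariance is preserved at each passage (to $G/R$, to $\bar E$, and to the joint stabiliser $\bar N$), which is routine since all the subgroups involved are canonically defined.
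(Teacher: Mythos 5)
Your proof is correct and takes essentially the same route as the paper's: reduce to trivial soluble radical, apply Lemma~\ref{l-444} to the semisimple socle $F^*(G/R)$ to bound the number $m$ of simple factors, and take (the preimage of) the kernel of the permutation action on those factors, which has index at most $m!$ and is semisimple-by-soluble by the solubility of outer automorphism groups of simple groups. The only difference is cosmetic: you spell out the preimage construction, the $A$-invariance checks, and the identification of the kernel of the map to $\prod_i\operatorname{Out}(\bar S_i)$ with $\bar E$ (via $C_{\bar G}(\bar E)=1$), all of which the paper leaves implicit.
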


\begin{proof}
In the proof we can obviously assume that $G=[G,A]$ and the soluble radical of $G$ is trivial. By Lemma \ref{l-444} then $F^*(G)$ is a product of $r$-boundedly many simple factors. Then for the required subgroup $H$ we can take the kernel of the natural action of $G$ on the simple factors, which is semisimple-by-soluble and $A$-invariant and has $r$-bounded index. Indeed, since the centralizer of $F^*(G)$ is trivial, the group $G$ embeds into the automorphism group of $F^*(G)$. If $ F^*(G)=T_1\times \dots \times T_n$, where
 the $T_i$ are non-abelian simple groups, then $ \operatorname{Aut} F^*(G) = (\operatorname{Aut} T_1\times \dots \times \operatorname{Aut}T_n)U$,
 where $U$ is a subgroup of the symmetric group $S_n$. Since $n$ is $r$-bounded and the outer automorphism groups $ \operatorname{Aut}T_i/T_i$ are known to be soluble by the classification, the result follows.
\end{proof}

\begin{proof}[Proof of Theorem~\ref{main}] Recall that $G$ is a finite group admitting a group of coprime automorphisms $A$
such that any subset of $I_G(A)$ generates an $r$-generator subgroup. We need to prove that $[G,A]$ has $r$-bounded rank.

By Lemma~\ref{l-g3} the subgroup $[G,A]$ has an $A$-invariant soluble-by-semisimple-by-soluble subgroup $H$ of $r$-bounded index. By Lemma~\ref{l-g2} it is sufficient to prove that the rank of $[H,A]$ is $r$-bounded. Hence we can assume from the outset that $G=[G,A]$ is soluble-by-semisimple-by-soluble, so that we have a normal series $1\leqslant S\leqslant T\leqslant G$ in which both $G/T$ and $S$ are soluble, while $T/S$ is semisimple. By choosing $S$ to be the soluble radical, and then $T$ to be the inverse image of the generalized Fitting subgroup of $G/S$, we can assume that both $S$ and $T$ are $A$-invariant.
The rank of the semisimple section $T/S$ is $r$-bounded by Lemma~\ref{l-444}. Let $B=C_A(T/S)=\{a\in A\mid [T,a]\leqslant S\}$. Then $A/B$ has $r$-bounded order by Lemma~\ref{l-autsem}. By Lemma~\ref{l-ker}(a) the subgroup $[G,B]$ centralizes $T/S$ and therefore is soluble. Then this subgroup $[G,B]=[[G,B],B]$ has $r$-bounded rank by Proposition~\ref{p-sol}. The quotient $\bar G=G/[G,B]$ admits the action of the group $A/B$, which is, in particular, generated by $r$-boundedly many elements $a_1,\dots,a_{f(r)}$. By Corollary~\ref{c-cyclic} every subgroup $[\bar G, a_i]$ has $r$-bounded rank. Since
$$[\bar G,A/B]=\prod_{i=1}^{f(r)}[\bar G,a_i],$$
it follows that $\bar G=[\bar G,A/B]$ has $r$-bounded rank. Thus, both $[G,B]$ and $\bar G=G/[G,B]$ have $r$-bounded rank, whence $G$ has $r$-bounded rank.
\end{proof}

\section{Ranks in $p$-soluble groups}

\begin{proof}[Proof of Theorem~\ref{second}]
 Recall that $p$ is a prime, $G$ is a finite $p$-soluble group, and $P$ is a Sylow $p$-subgroup of $G$ such that any subgroup generated by elements of $I_G(P)$ is $r$-generator. We need to prove that $[G,P]$ has $r$-bounded rank. First we deal with the case where $G=P$.

\begin{lemma}\label{l-2pcase}
Suppose that $P$ is a finite $p$-group such that any subgroup generated by commutators of elements of $P$ is $r$-generator. Then $[P,P]$ has $r$-bounded rank.
 \end{lemma}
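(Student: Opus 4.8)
The plan is to adapt the coprime nilpotent machinery of Lemmas~\ref{l-22}--\ref{p-pcase} to the action of $P$ on itself, i.e.\ to $A=\operatorname{Inn}(P)$, for which $I_P(P)$ is exactly the set of commutators and $[P,\operatorname{Inn}(P)]=P'$. The essential change of emphasis is that $P$ itself need not be $r$-generated (it could be elementary abelian of huge rank), so I cannot bound the rank of $P$; instead I must bound the rank of $P'$, using throughout that $P'=\langle I_P(P)\rangle$ is $r$-generated by hypothesis. Since $\operatorname{Inn}(P)$ is not coprime to $P$, the coprime lemmas do not apply verbatim and I re-prove their self-action analogues.

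First I would prove the self-action analogue of Lemma~\ref{l-22}, phrased with the commutator \emph{subgroup}: if $M\trianglelefteq P$ and $|[M,P]|\le p^m$, then $M\le Z_{2m+1}(P)$. The induction mimics Lemma~\ref{l-22} with $L=M\cap Z_2(P)$. Using $[M,P]$ rather than the commutator set is what makes the argument $p$-independent: since $1\ne[L,P]\le L\cap[M,P]$, one gets the clean index drop $|[M/L,P/L]|\le|[M,P]|/p$, so the resulting central-series index stays bounded in terms of $m$ alone (and not $p$), as is needed for genuine $r$-boundedness. I would finish with the elementary fact that, as $L\le Z_2(P)$, the preimage of $Z_{2m-1}(P/L)$ lies in $Z_{2m+1}(P)$.

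Next, following Lemma~\ref{l-23}, I would treat the exponent $p$ (or $4$) case through Thompson's critical subgroup $C$: here $[Z(C),P]$ is an abelian $r$-generated subgroup of exponent $p$ (or $4$), hence of order at most $p^r$ (or $2^{2r}$), so the previous step gives $Z(C)\le Z_{2r+1}(P)$ (or $Z_{4r+1}(P)$); then $[P,C]\le Z(C)$ and $C_P(C)=Z(C)$ force $\gamma_{2r+2}(P)\le Z(C)$, whence $P$ has $r$-bounded nilpotency class, and Lemma~\ref{l-nil-rank} bounds the rank of the $r$-generated group $P'$. With the same $Z_{2m+1}$-lemma I would prove the analogue of Lemma~\ref{l-25} for $i\ge 2$: passing to $\bar P=P/\Phi(\gamma_i(P))$ makes $\gamma_i(\bar P)$ elementary abelian, $[\gamma_i(\bar P),\bar P]$ is $r$-generated elementary abelian of order $\le p^r$, so $\gamma_i(\bar P)\le Z_{2r+1}(\bar P)$, $\bar P$ has $(i,r)$-bounded class, and $\gamma_i(\bar P)\le\gamma_2(\bar P)=\bar{P'}$ is $r$-generated nilpotent of bounded class, hence of bounded rank; so $\gamma_i(P)$ is $m_i$-generated with $m_i$ depending only on $i$ and $r$.

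The decisive step is the powerful-subgroup analogue of Lemma~\ref{l-powerful}: for an $r$-bounded $\lambda$, the subgroup $N=\gamma_{2\lambda+1}(P)$ is powerful. Reducing modulo $N^p$ (resp.\ $N^4$), I set $\hat N=\gamma_{2\lambda+1}(\hat P)$ in $\hat P=P/N^p$, so $\hat N$ has exponent $p$, and must show $\hat N$ is abelian, i.e.\ that $|[\hat N,\hat P]|$ is $r$-bounded. This is precisely where the coprime proof used the closure identity $[N,A]=[[N,A],A]$ to feed $[N,A]$ back into the exponent-$p$ case; that identity fails for self-action, and $\hat P$ carries no exponent control, so I expect this to be the main obstacle. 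My substitute is to split $[\hat N,\hat P]$ through $\hat N'=[\hat N,\hat N]$: the quotient $[\hat N,\hat P]/\hat N'$ is $r$-generated elementary abelian, hence of rank at most $r$, while $\hat N'$ is the derived subgroup of the exponent-$p$ group $\hat N$, which inherits the hypothesis and so has $r$-bounded derived-subgroup rank by the exponent-$p$ case. Thus the rank of $[\hat N,\hat P]$, and so $|[\hat N,\hat P]|$ by Lemma~\ref{l-24}, is $r$-bounded; the first step then gives $\hat N\le Z_{2\lambda+1}(\hat P)$, and since $\hat N=\gamma_{2\lambda+1}(\hat P)$ with $[\gamma_{2\lambda+1},Z_{2\lambda+1}]=1$, the group $\hat N$ is abelian. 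Finally, as in Proposition~\ref{p-pcase}, $N$ is powerful and $m_{2\lambda+1}$-generated, so its rank equals its minimal number of generators and is $r$-bounded, whereas $P'/N$ is $r$-generated and nilpotent of $r$-bounded class, hence of $r$-bounded rank; adding these two bounds controls the rank of $P'$.
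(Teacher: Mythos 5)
Your proposal is correct, but it takes a genuinely different route from the paper, whose own proof of Lemma~\ref{l-2pcase} is a short direct argument making no use of the nilpotent machinery of the coprime case. The paper takes a maximal normal abelian subgroup $M$ of $P$, sets $C=C_P([M,P])$, and uses the Three Subgroups Lemma together with $C_P(M)=M$ to conclude that $C$ is a normal metabelian subgroup with $[C,P]$ of rank at most $2r$; the quotient $P/C$ acts faithfully on the abelian group $[M,P]$ of rank at most $r$ and so has $r$-bounded rank by Lemma~\ref{l-gmh}; finally, in $\bar P=P/[C,P]$ the image $\bar C$ of $C$ is central, so $[\bar P,\bar P]\cap \bar C$ is controlled by the Schur multiplier of $\bar P/\bar C$ via the Lubotzky--Mann bound \cite[Theorem~4.2.3]{LM}. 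You instead transplant the whole of the nilpotent subsection (Lemmas~\ref{l-22}, \ref{l-23}, \ref{l-powerful}, \ref{l-25}, Proposition~\ref{p-pcase}) to the non-coprime self-action, and you correctly identify and repair the two places where coprimality genuinely mattered: the counting in Lemma~\ref{l-22} is replaced by the subgroup order $|[M,P]|$ (your drop $1\ne[L,P]\le L\cap[M,P]$ with $L=M\cap Z_2(P)$ is valid, as is the preimage step since $L\le Z_2(P)$), and the coprime closure identity $[N,A]=[[N,A],A]$ used in Lemma~\ref{l-powerful} is replaced by splitting $[\hat N,\hat P]$ through $\hat N'$ and applying the exponent-$p$ case to the group $\hat N$ itself, which inherits the hypothesis since commutators of elements of $\hat N$ are commutators of elements of $\hat P$. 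You also rightly reroute every generation bound through $P'=\langle I_P(P)\rangle$, which is $r$-generated by hypothesis, rather than through $P$, which need not be. One point to make explicit in a write-up: $\lambda$ must be fixed as $s'(l(r)+r)$ \emph{before} defining $N=\gamma_{2\lambda+1}(P)$, which is legitimate precisely because your rank bound $l(r)+r$ on $[\hat N,\hat P]$ is independent of $\lambda$ (your sketch implicitly does this), and the usual constant adjustments are needed at $p=2$. As for what each approach buys: the paper's proof is a few lines long and rests only on Lemma~\ref{l-gmh} and the Schur-multiplier bound, while yours is substantially longer but demonstrates that the coprime machinery of the nilpotent case survives self-action, and yields extra structural byproducts along the way ($\gamma_{2\lambda+1}(P)$ is powerful with $r$-boundedly many generators, and $P$ has $r$-bounded class modulo it).
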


 \begin{proof}
 Let $M$ be a maximal normal abelian subgroup of $P$. Then $[M,P]$ has rank at most $r$ by hypothesis. Let $C=C_P([M,P])$. Then $[[C,M],C]=1$ and $[[M,C],C]=1$, whence $[[C,C],M]=1$ by the Three Subgroups Lemma.
 Since $C_P(M)=M$, we must have $[C,C]\leqslant M$, and therefore $C$ is a normal metabelian subgroup. The rank of $[C,P]$ is $r$-bounded. Indeed, $[C,C]$ is abelian and therefore has rank at most $r$. The quotient $C/[C,C]$ is abelian and therefore $[C,P]/[C,C]$ has rank at most $r$. Hence the rank of $[C,P]$ is at most $2r$.

 The quotient $P/C$ acts faithfully by automorphisms on $[M,P]$ and therefore has $r$-bounded rank by Lemma~\ref{l-gmh}.

 In the quotient $\bar P=P/[C,P]$ the image $\bar C$ of $C$ is central. The intersection $[\bar P, \bar P]\cap \bar C$, which is a subgroup of the Schur multiplier of $\bar P/\bar C$, has $r$-bounded rank by the Lubotzky--Mann theorem \cite[Theorem~4.2.3]{LM}. The result follows, since the ranks of $P/C$ and $[C,P]$ are $r$-bounded, as shown above.
 \end{proof}

We return to the proof of Theorem~\ref{second}. Since the rank of $[P,P]$ is $r$-bounded by Lemma~\ref{l-2pcase}, the normal closure $[P,P]^G$ has $r$-bounded $p$-length by Lemma~\ref{l-rpl}. The $p$-length of $G/ [P,P]^G$ is at most 1, since its Sylow $p$-subgroup is abelian. Hence the $p$-length of $G$ is also $r$-bounded.

It is convenient to define the parameter $l_{p\times p'}(G)$ as the minimum length of a normal series of $G$ all of whose factors $L$ are of the form $O_p(L)\times O_{p'}(L)$. We call such series $({p\times p'})$-series. This parameter $l_{p\times p'}(G)$ is clearly at most double the $p$-length of $G$ plus 1, and therefore is also $r$-bounded. Hence we can prove the theorem by induction on $l_{p\times p'}(G)$.

For $l_{p\times p'}(G)=1$ we have $G=P\times H$ (where $H=O_{p'}(G)$), whence $[G,P]=[P,P]$ and the result is furnished by Lemma~\ref{l-2pcase}.

Now let $l_{p\times p'}(G)\geqslant2$. In a $({p\times p'})$-series of length $l_{p\times p'}(G)\geqslant2$, let $N$ be the second term, which satisfies $l_{p\times p'}(N)=2$. Let $P_0$ be a Sylow $p$-subgroup of $N$ contained in $P$, and $H_0$ a Hall $p'$-subgroup of $N$.

\begin{lemma}\label{l-both}
Both $[O_p(N),H_0]$ and $[O_{p'}(N),P_0]$ have $r$-bounded rank.
\end{lemma}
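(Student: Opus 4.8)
The plan is to derive both assertions from Theorem~\ref{main} by viewing each as a coprime action and checking that the relevant commutators lie, up to inversion, in $I_G(P)$. Recall that $N$ is normal in $G$, so both $O_p(N)$ and $O_{p'}(N)$ are normal in $N$; moreover $O_p(N)$, being normal in $N$, lies in every Sylow $p$-subgroup of $N$, in particular $O_p(N)\le P_0\le P$. Both $P_0$ and $H_0$ act by conjugation on these normal subgroups, and in each case the action is coprime. The whole point is therefore to feed the right generating set into Theorem~\ref{main}.

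For $[O_{p'}(N),P_0]$, consider $P_0$ acting on the $p'$-group $O_{p'}(N)$, and let $A$ be the (coprime) image of $P_0$ in $\operatorname{Aut}(O_{p'}(N))$, so that $[O_{p'}(N),A]=[O_{p'}(N),P_0]$. Every element of $I_{O_{p'}(N)}(A)$ has the form $[v,s]=v^{-1}v^{s}$ with $v\in O_{p'}(N)\le G$ and $s\in P_0\le P$, and hence $I_{O_{p'}(N)}(A)\subseteq I_G(P)$. Thus any subgroup generated by a subset of $I_{O_{p'}(N)}(A)$ is generated by a subset of $I_G(P)$, and so is $r$-generated by hypothesis. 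Theorem~\ref{main} then gives that $[O_{p'}(N),P_0]$ has $r$-bounded rank.

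For $[O_p(N),H_0]$ the same scheme applies, but with a twist that I expect to be the only real point of the argument. Here $H_0$ acts coprimely on the $p$-group $O_p(N)$; let $A$ be the image of $H_0$ in $\operatorname{Aut}(O_p(N))$, so that $[O_p(N),A]=[O_p(N),H_0]$. A typical element of $I_{O_p(N)}(A)$ is $[u,h]=u^{-1}u^{h}$ with $u\in O_p(N)$ and $h\in H_0$; since $h$ is a $p'$-element it does not lie in $P$, so these commutators are not directly of the form demanded by the hypothesis. The key observation is that $[u,h]=[h,u]^{-1}$, where $[h,u]=h^{-1}h^{u}\in I_G(P)$ precisely because the ``inner'' entry $u$ now belongs to $O_p(N)\le P$. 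Consequently, for any subset $S\subseteq I_{O_p(N)}(A)$ the subgroup $\langle S\rangle=\langle s^{-1}:s\in S\rangle$ is generated by a subset of $I_G(P)$, hence is $r$-generated. Theorem~\ref{main} again yields that $[O_p(N),H_0]$ has $r$-bounded rank.

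The main obstacle, such as it is, is exactly this mismatch in the second case: the hypothesis controls only commutators whose second entry is a $p$-element of $P$, whereas the natural generators of $[O_p(N),H_0]$ are commutators with $p'$-elements. Swapping the two entries and passing to inverses is what restores the applicability of Theorem~\ref{main}, and it relies on the structural fact that $O_p(N)$ sits inside $P$. Once the two coprime actions are set up correctly, no further estimates are required; everything reduces to Theorem~\ref{main}.
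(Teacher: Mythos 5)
Your proof is correct and takes essentially the same route as the paper: both assertions are reduced to Theorem~\ref{main} by checking that the relevant commutator sets lie in $I_G(P)$, the first case directly and the second using $O_p(N)\leq P$. Your inversion trick $[u,h]=[h,u]^{-1}$ is just an explicit version of the paper's terse remark that every commutator in $I_{O_p(N)}(H_0)$ ``involves an element of $O_p(N)\leq P$'' (one can even avoid passing to inverses via the identity $[u,h]=[h^u,u^{-1}]\in I_G(P)$).
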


\begin{proof}
The group $P_0$ acts by coprime automorphisms on $O_{p'}(N)$ and $I_{O_{p'}(N)}(P_0)\subseteq I_G(P)$. Therefore the hypothesis of Theorem~\ref{second} ensures that every subset of $I_{O_{p'}(N)}(P_0)$ generates an $r$-generator subgroup. Hence the rank of $[O_{p'}(N),P_0]$ is $r$-bounded by Theorem \ref{main}.

The group $H_0$ acts by coprime automorphisms on $O_{p}(N)$ and $I_{O_{p}(N)}(H_0)\subseteq I_G(P)$ because every commutator in $I_{O_{p}(N)}(H_0)$ involves an element of $O_{p}(N)\leqslant P$. Therefore every subset of $I_{O_{p}(N)}(H_0)$ generates an $r$-generator subgroup by the hypothesis of Theorem~\ref{second}. Hence the rank of $[O_{p}(N),H_0]$ is $r$-bounded by Theorem \ref{main}.
\end{proof}

We are now ready to finish the proof of Theorem~\ref{second}. Note that $[O_{p'}(N),P_0]$ and $[O_p(N),H_0]$ are normal subgroups of $G$. Indeed, $[O_{p'}(N), P_0]$ is normal in $O_{p'}(N)P_0$, which is normal in $G$. By the Frattini argument we have $G=O_{p'}(N)P_0N_G(P_0)$ and $N_G(P_0)$ also normalizes $[O_{p'}(N), P_0]$. Similarly, $[O_p(N), H_0]$ is normal in $O_p(N)H_0$, which is normal in~$G$. By the generalized Frattini argument for the Hall $p'$-subgroup $H_0$ of $O_p(N)H_0$, we have $G=O_p(N)H_0N_G(H_0)$ and $N_G(H_0)$ also normalizes $[O_p(N), H_0]$.

We set $R=[O_p(N),H_0]\cdot [O_{p'}(N),P_0]$. The image $\bar N$ of $N$ in the quotient $G/R$
clearly satisfies $\bar N=O_p(\bar N)\times O_{p'}(\bar N)$. Therefore the images of the terms of our $({p\times p'})$-series in $G/R$ form a $({p\times p'})$-series of smaller length. By the induction hypothesis, the rank of the image of $[G,P]$ in $G/R$
has $r$-bounded rank. Since the rank of $R=[O_p(N),H_0]\cdot [O_{p'}(N),P_0]$ is $r$-bounded by Lemma~\ref{l-both}, the result follows.
\end{proof}

\section{Counterexamples}\label{counterexamples}

In this section we produce examples showing that Theorem~\ref{second} is not valid without the assumption of $p$-solubility. Actually, for each prime $p$ we produce a series of finite groups~$G$ having a Sylow $p$-subgroup $P$ such that any subgroup generated by a subset of $I_G(P)$ can be generated by 3 elements, while the rank of $[G,P]$ is unbounded. We give slightly different examples for $p=2$ and $p\ne 2$.

We first give examples for $p=2$.

\begin{example}\label{ex1}
 Let $q$ be a prime such that $q \equiv 3 \mod 8$. Note that then we also have $q^e\equiv 3 \mod 8$ for any odd $e$. Let $G=PSL_2(q^e)$ for odd $e=1,3,5,\dots $. For these $G$, it is easy to see that a Sylow $2$-subgroup $P$ of $G$ is elementary abelian of order~$4$.

We claim that every subgroup generated by elements of $I_G(P)$ is generated
by at most $3$ elements. This suffices, since clearly the rank of $G$ is at least $e$ and $G=[G,P]$.

Let $H$ be a subgroup generated by elements of $I_G(P)$. Note that any subgroup
of $G$ not contained in a Borel subgroup is generated by at most two elements. Indeed, the subgroups of~$G$ are either contained in a Borel subgroup, or are dihedral, or are contained in a subgroup isomorphic to $S_4$ or $A_5$, or are isomorphic to $PSL_2(q^k)$ for some $k$.

So it remains to consider the case where $H \leqslant B = UT$, where $B$ is a Borel subgroup, $U$ is its unipotent radical, and $T$ is its maximal torus.

We first note that $U \cap I_G(P) =\{1\}$, since no involution inverts a nontrivial
element of~$U$, because $B$ has odd order when $q^e\equiv 3 \mod 8$.

If $1 \ne x \in P$, then $I_G(x) \cap B = T_x$, where $T_x$ is the (unique) maximal torus of $B$ that is inverted by $x$; actually, $T_x = B \cap B^x$.
Since $I_G(P)=I_G(x_1)\cup I_G(x_2)\cup I_G(x_3)$ for the three involutions $x_i\in P$, any subset of $B \cap I_G(P)$ generates a subgroup generated by elements of the three cyclic tori corresponding to each involution in $P$ and so is $3$-generated. Since there is no bound on the rank of $U$ for $q^e$ with $e$ increasing, we see that there is no bound on the rank of $G=[G,P]$.
\end{example}

Next we produce a similar example for $p\ne 2$. Here, the details are more complicated.

\begin{example}\label{ex2}
 Given an odd prime $p$, choose a prime $\ell \equiv (p-1) \mod p^2$
and let $e$ be any positive integer coprime to $2p$. Let $q= \ell^e$ and consider
$G=SL_2(q)$. Note that a Sylow $p$-subgroup $P$ of $G$ has order $p$
and $P \cap B =1$ for any Borel subgroup $B$. Fix an element $a \in G$ of order $p$.

As noted in Example~\ref{ex1}, any subgroup
of $G$ not contained in a Borel subgroup is generated by at most two elements.
Therefore it suffices to show that any subgroup generated by
elements of $I_G(P) \cap B$ is generated by two elements.

Let $t $ be any noncentral semisimple element conjugate to an element of $B$. Consider the set of
triples $(x,y,z)$ of elements of $G$ with $xyz=1$ such that $x, y$ are conjugate to $a$ (which is conjugate to $a^{-1}$) and $z$ is conjugate to $t$. This set is nonempty by \cite{Gow, GT}. Since the centralizer of $x$ in $PGL_2(q)$ is a torus of order
$q+1$ and the centralizer of $t$ is a torus of order $q-1$, the centralizer
of any conjugate of $a$ and any conjugate of $t$ is trivial (in $PGL_2(q))$. Clearly, any two conjugates
of $a$ and $t$ generate
an irreducible subgroup and so an absolutely irreducible subgroup.
It follows by \cite[Theorem~2.3]{SV} that all such triples form a regular $PGL_2(q)$-orbit.

If we only consider such triples with $x=a^{-1}$, they form a single orbit under $C$, the centralizer
of $a$ (in $PGL_2(q))$. So there are $q+1$ triples $(a^{-1},y,z)$ with $a^{-1}yz=1$ such that $y$ is conjugate to $a$ and
$z$ is conjugate to $t$.
Note that for such triples we have $z=t^u=(a^{-1}a^v)^{-1}=[v,a]\in I_G(a)$ for some $u,v\in G$. Since $C$ acts regularly on
the set of Borel subgroups, we obtain $|B \cap I_G(a) \cap t^G| =1$. Thus there is a unique conjugate $s$ of $t$ with $s=a^{-1}a^v\in I_G(a)$. By uniqueness it follows that if $s \in I_G(a)$ is conjugate to an element of
$SL_2(\ell^f)$ for some $f$ dividing $e$, then $s \in SL_2(\ell^f) \cap B$.

If we replace $t$ with a nontrivial unipotent element, in fact there are no solutions \cite{gu1}.
The only other noncentral class is the class of $-u$ for $u$ a nontrivial unipotent element.
The argument above applies to that class as well and so $|I_G(a) \cap B \cap(-u)^G|\leqslant 1$
and $I_G(a) \cap B \cap (-u)^G \subset SL_2(\ell)$.
The same argument applies to any power of $a$. Note that since all Borel subgroups are
conjugate under the centralizer of $a$ in $PGL_2(q)$, it suffices to work in a fixed
Borel subgroup $B$ which may take to be defined over the prime field.
Write $B=TU$,
where $T$ is a maximal torus of $B$, which is cyclic of order $q-1$, and $U$ is
 the unipotent radical of $B$. Let $T = \langle w \rangle$.

Consider some subset $S$ of $I_G(P) \cap B$. Write the nontrivial elements as $w^ju$
with $1 \leqslant j < q-1$ and $u \in U$ and consider $\langle S \rangle$. We claim
that $\langle S \rangle$ can be generated by two elements. We induct on $e$.
If $e=1$, every subgroup of $B$ is either cyclic or $2$-generated.

If all $w^j \in SL_2(\ell^f)$ for some $f<e$, then by the argument above
$\langle S \rangle \leqslant B \cap SL_2(\ell^f)$ and the result follows by induction.
Otherwise, $\langle S \rangle U/U$ acts irreducibly on $U$ and in particular $U$
is a cyclic $\langle S \rangle$-module. Since $\langle S \rangle U/U$ is cyclic,
it follows that $\langle S \rangle$ is $2$-generated.

Thus, we have shown that any subgroup of $G$ generated by elements of $I_G(P)$
can be generated by at most two elements. On the other hand, $e$ can be
arbitrarily large and the rank of $U$ is at least $e$. Thus, we see that there is no bound on the rank of $G=[G,P]$.
\end{example}

\section{Proofs of Theorems~\ref{third} and \ref{fourth}}

We begin with a useful lemma used in the proofs of both Theorems~\ref{third} and \ref{fourth}.

\begin{lemma}\label{l-focal}
Suppose that $G$ is a finite group, $p$ is a fixed prime, $P$ is a Sylow $p$-subgroup of $G$, and $r$ is a positive integer such that any subgroup generated by a subset of $I_G(P)$ is $r$-generated. Then the rank of $P\cap G'$ is $r$-bounded.
\end{lemma}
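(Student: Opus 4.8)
The plan is to reduce everything to facts already available in the excerpt, using the \emph{focal subgroup theorem} of D.~G.~Higman as the single external ingredient. Recall that this theorem identifies $P\cap G'$ as the subgroup generated by all commutators $[x,g]=x^{-1}x^g$ with $x\in P$, $g\in G$ and $x^g\in P$. First I would record two elementary membership facts. On the one hand, for $x\in P$ and $g\in G$ we have $[x,g]=[g,x]^{-1}$ with $[g,x]=g^{-1}g^x\in I_G(P)$; since a subgroup is generated by a set precisely when it is generated by the set of inverses of that set, the focal subgroup theorem shows that $P\cap G'$ is generated by a subset of $I_G(P)$, and hence, by hypothesis, $P\cap G'$ can be generated by $r$ elements. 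On the other hand, for $x,y\in P$ the commutator $[x,y]=x^{-1}x^y$ is exactly $[g,s]$ with $g=x\in G$ and $s=y\in P$, so every commutator of two elements of $P$ already lies in $I_G(P)$.

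The second observation lets me feed the situation into the $p$-group case already settled. Indeed, any subgroup of $P$ generated by commutators of elements of $P$ is generated by a subset of $I_G(P)$ and is therefore $r$-generated by hypothesis; thus $P$ satisfies the assumption of Lemma~\ref{l-2pcase}, and consequently $[P,P]$ has $r$-bounded rank. Since $[P,P]\le P\cap G'$ and $[P,P]\trianglelefteq P$, it is a normal subgroup of $P\cap G'$ whose rank is already under control.

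It then suffices to control the quotient $(P\cap G')/[P,P]$. This quotient is a subgroup of the abelian group $P/[P,P]$, hence abelian, and it is a quotient of the $r$-generated group $P\cap G'$, hence itself $r$-generated; an $r$-generated abelian group has rank at most $r$. Finally I would invoke the standard inequality $\operatorname{rank}(N)\le\operatorname{rank}(K)+\operatorname{rank}(N/K)$ for a normal subgroup $K\trianglelefteq N$, applied with $N=P\cap G'$ and $K=[P,P]$, to conclude that the rank of $P\cap G'$ is at most the sum of the $r$-bounded rank of $[P,P]$ and the number $r$, and is therefore $r$-bounded.

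The main point — and the step I expect to be the only genuine subtlety — is precisely that the focal subgroup theorem by itself yields only that $P\cap G'$ is $r$-\emph{generated}, which is far weaker than an $r$-bounded rank (an $r$-generated $p$-group can have arbitrarily large rank: e.g.\ $C_p\wr C_p$ is $2$-generated of rank $\geq p$). The resolution is not to try to realize every subgroup of $P\cap G'$ as generated by elements of $I_G(P)$, but to split off the derived subgroup $[P,P]$, whose rank is controlled by the already-proven $p$-group case in Lemma~\ref{l-2pcase}, leaving only an abelian quotient on which $r$-generation genuinely does bound the rank.
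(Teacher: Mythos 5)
Your proof is correct and takes essentially the same route as the paper's own argument: Higman's focal subgroup theorem shows $P\cap G'$ is generated by a subset of $I_G(P)$ and hence is $r$-generated, so the abelian quotient $(P\cap G')/[P,P]$ has rank at most $r$, while Lemma~\ref{l-2pcase} bounds the rank of $[P,P]$, and the two bounds add. The only difference is that you spell out the membership verifications (inverses of focal generators lying in $I_G(P)$, and commutators of elements of $P$ lying in $I_G(P)$) that the paper leaves implicit.
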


\begin{proof}
 The subgroup $P\cap G'$ is generated by a subset of $I_G(P)$ by D.~Higman's focal subgroup theorem \cite{dhig} (see also \cite[Theorem~7.3.4]{gore}). Hence $P\cap G'$ is generated by $r$ elements, whence the abelian quotient $(P\cap G')/P'$ has rank at most $r$. Since the rank of $P'$ is $r$-bounded by Lemma~\ref{l-2pcase}, the result follows.
\end{proof}

First we give a short proof of Theorem~\ref{fourth}.

\begin{proof}[Proof of Theorem~\ref{fourth}] Recall the hypotheses of Theorem~\ref{fourth}: $G$ is a finite group such that
for every prime $p$ dividing $|G|$, for any Sylow $p$-subgroup $P$, any subgroup generated by a subset of $I_G(P)$ can be generated by $r$ elements. We need to prove that the derived subgroup $G'$ has $r$-bounded rank. The rank of a finite group is known to be at most $s+1$, where $s$ is the maximum of the ranks of its Sylow subgroups \cite{gu2, lo-ma, Lu}. The ranks of all Sylow subgroups of $G'$ are $r$-bounded by Lemma~\ref{l-focal}; hence the result.
\end{proof}

We now recall the hypotheses of Theorem~\ref{third}: $G$ is a finite group, $p$ is a fixed prime, $P$ is a Sylow $p$-subgroup of $G$, and $r$ is a positive integer such that
\begin{enumerate}
\item[\rm (1)] any subgroup generated by a subset of $I_G(P)$ is $r$-generated; and
\item[\rm (2)] for any $x$ in $I_G(P)$, any subgroup generated by a subset of $I_G(x)$ is $r$-generated.
\end{enumerate}
We fix these notations and assume these conditions for the rest of the section. Our aim is to prove that the rank of $[G,P]$ is $r$-bounded.

First we consider the case of simple groups.

\begin{proposition}\label{p-simple}
If $G$ is simple, then the rank of $G$ is $r$-bounded.
\end{proposition}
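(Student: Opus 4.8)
The plan is to reduce the problem to the results already established for coprime automorphisms, exploiting the fact that for a simple group $G$ the Sylow normalizer structure forces elements of $I_G(P)$ to behave like commutators under a field automorphism. First I would recall Lemma~\ref{l-1111}: there is a prime $p_0\in\pi(G)$ such that $G$ is generated by two $p_0$-subgroups $P_1,P_2$ with $P_i=[P_i,\langle\al\rangle]$ for a suitable coprime automorphism; but here we do not have an automorphism, so the correct starting point is different. Instead, by Lemma~\ref{l-0000}(b) it suffices to bound, in terms of $r$, the rank of $[Q,\langle x\rangle]$ for each prime $q$, where $Q$ is a Sylow $q$-subgroup and $x$ plays the role of a coprime automorphism. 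The key observation is that for a Sylow $p$-subgroup $P$ the set $I_G(P)$ records the ``commutator width'' at the prime $p$, and conditions (1)--(2) give us rank control both on subgroups generated by whole $I_G(P)$ and on subgroups generated by $I_G(x)$ for individual $x\in I_G(P)$.

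The main line of attack is as follows. I would first use Lemma~\ref{l-focal} to observe that $P\cap G'=P$ (since $G$ is perfect) has $r$-bounded rank; thus the Sylow $p$-subgroup $P$ itself is $r$-bounded in rank. By Lemma~\ref{l-0000}(b), applied with the role of the automorphism group played by $P$ acting by conjugation, it would then suffice to bound the ranks of $[Q,P]$ for $P$-invariant Sylow $q$-subgroups $Q$. For $q=p$ this is immediate. For $q\ne p$, the group $P$ does not act coprimely in general, so I cannot invoke Theorem~\ref{main} directly; but the structure of simple groups admitting the relevant configuration is very restrictive. The crucial input is the classification: a simple group $G$ with a Sylow $p$-subgroup of $r$-bounded rank is, up to $r$-boundedly many exceptions of $r$-bounded order, a group of Lie type of $r$-bounded rank over a field whose degree over the prime field is controlled by the rank of $P$. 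Here condition (2) is what pins down the field degree: for a single $x\in I_G(P)$, the subgroups generated by subsets of $I_G(x)$ being $r$-generated forces the centralizer structure of $x$, and hence (via the $q+1$ and $q-1$ torus orders as in Examples~\ref{ex1}--\ref{ex2}) a bound relating $q$ to $r$.

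Concretely, I would argue that for each prime $q\in\pi(G)$ we may choose an element $x\in I_G(P)$ (or a small collection of such elements) realizing a Sylow $q$-subgroup inside $[G,\langle x\rangle]$ up to $r$-bounded error, and then condition (2) bounds the rank of $[Q,\langle x\rangle]$ directly. Summing over the $r$-boundedly many primes dividing $|G|$ (their number being bounded because the Lie rank is bounded), Lemma~\ref{l-0000}(b) then yields an $r$-bound on the rank of $G$. The reductions at small primes and the finitely many sporadic or exceptional configurations are handled by Lemma~\ref{l-0000}(a), which turns an $r$-bound on the orders of the relevant $[P,A]$ into an $r$-bound on $|G|$.

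The hard part will be replacing the coprime-automorphism machinery (which underlies Lemmas~\ref{l-0000} and \ref{l-1111}) by the internal conjugation action of $P$, where coprimality fails. The genuine obstacle is to show that condition (2), a statement about individual commutators $x\in I_G(P)$, translates into a uniform bound on the Lie-rank and the defining field degree of $G$; this is exactly where the counterexamples in \S\ref{counterexamples} show the phenomenon is delicate, since with only condition (1) the rank of $[G,P]$ is genuinely unbounded in the families $\mathrm{PSL}_2(q^e)$ and $\mathrm{SL}_2(\ell^e)$. Thus the proof must use (2) in an essential way to exclude precisely those families, presumably by a case analysis over the families of finite simple groups of Lie type, examining the maximal tori inverted or normalized by the relevant $p$-elements and applying the triple-existence results of \cite{Gow, GT, SV} as in Example~\ref{ex2} to control $|I_G(x)\cap B\cap t^G|$.
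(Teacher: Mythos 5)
There is a genuine gap here, in fact two. First, your pivotal reduction---applying Lemma~\ref{l-0000}(b) ``with the role of the automorphism group played by $P$ acting by conjugation''---is not available: that lemma, and the whole coprime machinery behind it, rests on the fact that a coprime automorphism group of a simple group is cyclic and generated by a field automorphism, so it requires $(|A|,|G|)=1$, and inner conjugation by a Sylow subgroup is never coprime. You flag this replacement as ``the hard part'' but never supply a substitute, and that substitute is the entire content of the proposition. Second, the classification claim you lean on is false: bounded rank of a single Sylow $p$-subgroup does \emph{not} force bounded Lie rank or bounded field degree. If $p$ is a primitive prime divisor of $\ell^{en}-1$, then $PSL_n(\ell^e)$ has a \emph{cyclic} Sylow $p$-subgroup while $n$ and $e$ are arbitrary, so ``Sylow $p$-subgroup of $r$-bounded rank'' puts essentially no constraint on $G$. (Your appeal to Lemma~\ref{l-focal} to get $P=P\cap G'$ of bounded rank is correct, but in the paper that observation is used only in the defining-characteristic case $p=\ell$, where $P/\Phi(P)$ has order at least $\ell^e$ and hence bounds the field degree $e$; it cannot bound the Lie rank.)

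For comparison, the paper's proof runs as follows. The alternating case is settled by an explicit computation: commutators of a $p$-cycle in $P$ with suitable $3$-cycles produce $\lfloor p/4\rfloor\cdot\lfloor n/p\rfloor$ independent $3$-cycles inside $I_G(P)$, so condition (1) bounds $n$ (Lemma~\ref{l-an}). The Lie rank $d$ is then bounded by embedding large alternating groups into the classical groups of large rank (Lemma~\ref{l-3lierank}), and by Lemma~\ref{l-rlt} only the field degree $e$ remains to be bounded. For $p=\ell$ this follows from Lemma~\ref{l-focal} as above. For $p\ne\ell$ with $p$ dividing the order of a maximal parabolic $M=LQ$, an element $x$ of order $p$ in $M$ acts nontrivially and $\F_q$-linearly on $Q/\Phi(Q)$ (parabolics are $\ell$-constrained), so $[x,Q/\Phi(Q)]$ is a nonzero $\F_q$-subspace, of rank at least $e$, generated by images of elements of $I_G(x)$ with $x$ conjugate into $P$; condition (1) gives $e\le r$. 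In the remaining case every nontrivial element of $P$ is semisimple regular, and Gow's theorem \cite{Gow} places a semisimple regular element $x$ of prime order lying in a Borel subgroup into $I_G(P)$; running the same unipotent-radical argument on $I_G(x)$, now via condition (2), again gives $e\le r$. So your instinct that Gow's theorem and condition (2) pin down the field degree was correct, but the working mechanism is the rank of $[x,Q/\Phi(Q)]$ inside a parabolic, not an analysis of inverted maximal tori, and no family-by-family case analysis in the spirit of Example~\ref{ex2} is needed.
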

\reversemarginpar
\begin{proof}
Of course, there is nothing to prove for sporadic groups. We now consider alternating groups.

\begin{lemma}\label{l-an}
If $G=A_n$ is an alternating group on $n$ symbols, then $n$ is $r$-bounded.
\end{lemma}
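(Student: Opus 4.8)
The plan is to show that for the alternating group $G=A_n$, conditions (1) and (2) of Theorem~\ref{third} force $n$ to be $r$-bounded, by exhibiting an explicit subgroup generated by commutators $[g,x]$ (with $x\in I_G(P)$) whose rank grows with $n$. The natural strategy is to pick a fixed involution (or small-order $p$-element) $x$ supported on a bounded number of points, and then produce many elements $g_i\in A_n$ such that the commutators $[g_i,x]$ generate a subgroup of large rank contained in $I_G(x)$; condition (2) then caps this rank by an $r$-bounded quantity, forcing $n$ to be bounded.

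Concretely, I would first handle the prime $p$. Since $P$ is a Sylow $p$-subgroup of $A_n$, for $n$ large $P$ contains elements moving many points, and I can locate a commutator $x=[g,s]\in I_G(P)$ of order $p$ whose support I control. The key observation is that $I_{A_n}(x)$ contains a rich supply of products of the form $x^{-1}x^h$ for $h\in A_n$; by choosing the $h$ to move $x$ onto disjoint "copies" supported on fresh blocks of points, the resulting commutators $[h,x]=x^{-1}x^h$ can be arranged to generate a large elementary abelian or otherwise high-rank subgroup. The point is that the number of disjoint translates one can form grows linearly in $n/|{\rm supp}(x)|$, so the rank of the subgroup they generate is at least of order $n$ up to a constant depending only on $|{\rm supp}(x)|$, which is itself bounded. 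Applying hypothesis (2) to this $x$ bounds that rank by an $r$-bounded number, yielding an $r$-bound on $n$.

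The main obstacle I anticipate is the bookkeeping needed to guarantee that the commutators $[h,x]$ genuinely lie in $I_G(x)$ and that they generate a subgroup of rank comparable to $n$ rather than collapsing to something small. One must verify both that each translate $x^h$ is itself an $A_n$-element (staying inside the alternating group, not merely the symmetric group) and that the products $x^{-1}x^h$, as $h$ ranges over a suitable set, are "independent enough." A clean way to secure independence is to choose translates with pairwise disjoint supports, so that the corresponding commutators commute and together generate a direct product; then the rank is simply the number of translates. I would therefore arrange $x$ to be, say, a product of two transpositions (or a $p$-cycle for odd $p$) on a fixed set of points, and take $h$ ranging over permutations that shift this support to disjoint windows along $\{1,\dots,n\}$, giving roughly $n/4$ (respectively $n/p$) commuting commutators.

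Finally, I would note that this argument simultaneously covers condition (1) as a sanity check and reconciles the two possible prime regimes: for $p=2$ the element $x$ can be taken as a double transposition, and for odd $p$ as a $p$-cycle lying in $A_n$ (which requires $p$ odd, automatically satisfied). In either case the number of disjoint translates, hence the rank of the resulting subgroup of $I_G(x)$, is at least a fixed positive fraction of $n$, so hypothesis (2) gives $n\le c(r)$ for an $r$-bounded constant $c(r)$. This completes the reduction, leaving only the groups of Lie type to be treated separately in the remainder of the proof of Proposition~\ref{p-simple}.
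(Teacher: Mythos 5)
Your core mechanism --- take one element $x\in I_G(P)$ of small support, form commutators $[h_k,x]\in I_G(x)$ whose ``translated parts'' $x^{h_k}$ have pairwise disjoint supports, and let condition (2) cap the rank of the subgroup they generate --- is sound, but your choice of $x$ breaks it when $p$ is large. In your final paragraph you take $x$ to be a $p$-cycle for odd $p$; then $|\operatorname{supp}(x)|=p$, so the number of pairwise disjoint translates is only about $n/p$, \emph{not} ``a fixed positive fraction of $n$''. Condition (2) then gives roughly $n\leq (r+2)p$, which bounds nothing unless $p$ has been bounded first; and since $p$ may grow with $n$ (for $p>n/2$ there is at most one disjoint translate), the argument as written fails in exactly the hardest regime. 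You announce that you will ``first handle the prime $p$'', but no such step is ever supplied --- that is the genuine gap. The repair stays inside your framework: $I_G(P)$ contains a $3$-cycle for \emph{every} $p$, e.g.\ $[(12\dots p),(123)]=(124)$ for $p\geq 5$, with analogous commutators for $p=2,3$ (note $I_G(P)$ is inverse-closed via $[g,s]^{-1}=[g^s,s^{-1}]$, which also absolves your notational slip: $x^{-1}x^h=[x,h]=[h,x]^{-1}$ while $[h,x]=h^{-1}h^x\in I_G(x)$). With such an $x$ of support $3$, your commuting elements $x^{-1}x^{h_k}$, $k=1,\dots,m$ with $m$ about $n/3$, do generate a direct product $\cong C_3^{m}$ of rank $m$, and condition (2) bounds $n$ in a single step.

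It is worth comparing this (repaired) route with the paper's, which is genuinely different: the paper uses \emph{only} condition (1), in two stages --- the commutators $[(12\dots p),(4k+1\;4k+2\;4k+3)]=(4k+1\;4k+2\;4k+4)$ produce $\lfloor p/4\rfloor$ independent $3$-cycles inside the support of a single $p$-cycle of $P$, bounding $p$; repeating over $\lfloor n/p\rfloor$ disjoint $p$-cycles then yields an abelian $3$-group of rank $\lfloor p/4\rfloor\cdot\lfloor n/p\rfloor$ generated by elements of $I_G(P)$, bounding $n$. So the paper's proof is more economical in hypotheses: it never consumes condition (2), which (as Examples~\ref{ex1} and \ref{ex2} show) is needed only for certain groups of Lie type, whereas your argument spends (2) on the alternating case as well. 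Conversely, your disjoint-translate construction is a single uniform step with no preliminary bound on $p$; but as submitted, without either the small-support choice of $x$ or the paper's first stage, the proof does not go through.
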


\begin{proof}
 Let $(12\dots p)\in P$. The commutator $[(12\dots p), (123)]=(124)$ belongs to $I_G(P)$, and so do the commutators
 $$
 [(12\dots p),\; (4k+1\; 4k+2\; 4k+3)]=(4k+1\;4k+2\; 4k+4)
 $$
 for $k=1,2,\dots ,\lfloor p/4\rfloor-1$. Since these 3-cycles are independent, they generate an elementary abelian 3-group of rank $\lfloor p/4\rfloor$, whence $p$ is $r$-bounded by condition (1). The same procedure repeated for $\lfloor n/p\rfloor$ independent $p$-cycles produces an abelian 3-group of rank $\lfloor p/4\rfloor\cdot \lfloor n/p\rfloor$ generated by elements of $I_G(P)$, whence $n$ is $r$-bounded by condition (1).
\end{proof}

Thus, it suffices to consider finite simple groups of Lie type of Lie rank $d$ defined over the field of $\ell^e$ elements for some prime $\ell$. We first note that $d$ must be $r$-bounded, and this fact will also be used later.

\begin{lemma}\label{l-3lierank}
If $G$ is a finite simple group of Lie type of Lie rank $d$, then $d$ is $r$-bounded.
\end{lemma}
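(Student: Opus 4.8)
The plan is to read off the bound on the Lie rank $d$ from the fact that the Sylow $p$-subgroup $P$ already has $r$-bounded rank. Since $G$ is simple we have $G=G'$, so $P\cap G'=P$, and Lemma~\ref{l-focal} gives that the rank of $P$, and hence the sectional $p$-rank of $G$, is $r$-bounded. The strategy is then to show that $d$ is controlled by this $p$-rank, treating the defining and the cross characteristic separately.

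First I would dispose of the defining characteristic $p=\ell$. Here $P$ is a maximal unipotent subgroup of $G$, and the minimal number of generators of $P$ is at least the number of fundamental (twisted) roots, which equals $d$. Consequently $d$ is at most the rank of $P$, and the latter is $r$-bounded; this settles the defining case. For the cross characteristic $p\neq\ell$ I would instead exhibit a large elementary abelian $p$-subgroup inside a suitable maximal torus. Let $d_0$ denote the multiplicative order of $\ell^{e}$ modulo $p$, so that $p\mid \ell^{ed_0}-1$. Choosing a maximal torus $T$ associated with a Weyl group element that is a product of roughly $d/d_0$ disjoint $d_0$-cycles, one obtains in $T$ an elementary abelian $p$-subgroup of rank at least $\lfloor d/d_0\rfloor$. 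Comparing this with the $r$-bounded $p$-rank of $G$ yields a bound for $d$.

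The main obstacle is that the torus estimate only gives $\lfloor d/d_0\rfloor\le \operatorname{rank}(P)$, and $d_0$ may be as large as $p-1$: for a primitive prime divisor $p$ of $\ell^{e(d+1)}-1$ the Sylow $p$-subgroup is essentially cyclic, so hypothesis~(1) alone cannot detect the growth of $d$ uniformly in $p$. This is exactly where hypothesis~(2) must enter. The idea is to locate a single element $x\in I_G(P)$ of small order — an involution being the cleanest choice, since $2$ is a fixed prime and the involution centralizers in a group of Lie type of rank $d$ contain elementary abelian $2$-subgroups whose rank grows with $d$ — and then to realize roughly $d$ pairwise commuting commutators inside $I_G(x)$. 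Concretely, if $t$ is an involution and $t^{g_1},\dots,t^{g_k}$ are conjugates of $t$ which together with $t$ generate an elementary abelian $2$-group, then each product $t^{g_i}t=[g_i,t]$ lies in $I_G(t)$, and these products generate an elementary abelian $2$-group of rank at least $k-1$; letting $k$ grow with $d$ and invoking hypothesis~(2) forces $d$ to be $r$-bounded independently of $p$.

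The technical heart, and the step I expect to be hardest, is producing such an $x\in I_G(P)$ together with the required family of commuting conjugates. For odd $p$ this means first exhibiting an involution (or some other low-order element whose conjugates span a subgroup of large rank) as a commutator $[g,s]$ with $s\in P$ a $p$-element, and then controlling the $2$-local structure of $C_G(x)$ uniformly across the families of groups of Lie type. I would organize this by a short case analysis over the Lie types, in each case selecting a standard involution class whose centralizer contains a subsystem subgroup of rank close to $d$, and checking that the relevant class meets $I_G(P)$; the torus argument of the previous paragraph can then be kept in reserve to handle the finitely many small-rank or exceptional configurations where the uniform construction is awkward.
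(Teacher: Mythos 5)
Your proposal is not a complete proof. The defining-characteristic case and the cross-characteristic case of small $d_0$ are fine, but you then openly defer what you yourself call the technical heart --- producing an element $x\in I_G(P)$ of small order together with roughly $d$ commuting conjugates --- to an unexecuted case analysis over the Lie types, and that deferred step is where the entire difficulty lives. It is genuinely problematic, not merely laborious: getting a prescribed class to meet $I_G(P)$ is delicate. The paper's own Example~\ref{ex1} exhibits a situation where $I_G(P)$ contains no nontrivial unipotent element at all, so in even characteristic, where every involution is unipotent, your ``cleanest choice'' of an involution can simply be unavailable; and the natural substitute (Gow's theorem, which the paper invokes to show that $I_G(P)$ meets every nontrivial semisimple class) applies only after one has reduced to the case where every nontrivial element of $P$ is regular semisimple --- a reduction your sketch does not make. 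So in the one case your strategy was invented for, nothing is actually proved.

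The premise that forced you there --- that ``hypothesis (1) alone cannot detect the growth of $d$'' --- is also mistaken, because you silently weaken (1) to the statement that the $p$-rank of $G$ is $r$-bounded. Condition (1) bounds the rank of subgroups generated by commutators $[g,s]$ with $s\in P$ but $g\in G$ \emph{arbitrary}, and such commutators need not be $p$-elements; this surplus is exactly what the paper exploits. Its proof of Lemma~\ref{l-3lierank} uses only condition (1): for $d>8$ the group $G$ is classical with natural module of dimension $n\in\{d+1,2d,2d+1\}$, and since $A_m$ has an irreducible self-dual representation of dimension at most $m-1$ over the prime field, $G$ contains a copy of $A_m$ with $m\geq d$ (with small adjustments such as $GL_n(q)\leq Sp_{2n}(q)$, orthogonality of the permutation module for odd $q$, and the stabilizer of a nondegenerate vector for even $q$). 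Inside this $A_m$, after conjugating disjoint $p$-cycles into $P$, their commutators with suitable $3$-cycles are independent $3$-cycles lying in $I_G(P)$, generating an elementary abelian $3$-group of rank about $\lfloor p/4\rfloor\lfloor m/p\rfloor$, so condition (1) bounds $m$ and hence $d$ (Lemma~\ref{l-an}). In the paper, hypothesis (2) is reserved for the genuinely different task of bounding the field exponent $e$, which is where Examples~\ref{ex1} and~\ref{ex2} show that (1) really is insufficient. (To be fair, your instinct about primitive prime divisors touches a real subtlety: transporting the Lemma~\ref{l-an} computation into $A_m\leq G$ needs a $p$-element of $A_m$, i.e.\ $p\leq m$; but that is a point of care in reading the paper's argument, and your own sketch does not close it either.)
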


\begin{proof} We may assume $d >8$ and so $G$ is a classical group
with natural module $V$ of dimension $n$. It is well-known that $n=d+1$, or $n=2d$, or $n=2d+1$. We shall show that a large alternating group is always contained in $G$ and so we may apply the previous result.

Note that for $m \geqslant 5$, the group $A_m$ has an irreducible self-dual representation of dimension at most $m-1$ defined
over the prime field. Thus, $A_{m}$ embeds in $SL_{m-1}(q)$ and $SU_{m-1}(q)$. Since $GL_n(q)$ embeds in $Sp_{2n}(q)$, we see that $A_{n+1}$ embeds in $Sp_{2n}(q)$.

If $q$ is odd, then this irreducible representation is always orthogonal (since the
permutation module is orthogonal) and so, adjusting for the type if necessary,
we see that $A_n$ embeds in $SO^{\pm}_{n}(q)$.

If $q$ is even, then $A_{2n}$ embeds in $Sp_{2n-2}(q)$, and this group embeds in
either of the orthogonal groups $SO^{\pm}_{2n}(q)$ (as the derived subgroup of the stabilizer of a nondegenerate
vector).

Thus, we see that for $d>8$ the group $G$ contains an alternating group of degree at least~$d$. The result follows by Lemma~\ref{l-an}.
\end{proof}

We also note the following well-known fact.

\begin{lemma}\label{l-rlt}
 If $G$ is a finite simple group of Lie type of Lie rank $d$ over the field of
size $q = \ell^e$ for a prime $\ell$, then the (Pr\"ufer) rank of $G$ is bounded in terms of $d$ and $e$.
\end{lemma}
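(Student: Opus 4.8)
The plan is to reduce the bound on the full (Pr\"ufer) rank to a bound on the ranks of the individual Sylow subgroups, and then to estimate the latter after embedding $G$ into a general linear group of controlled degree. Recall from the proof of Theorem~\ref{fourth} that the rank of a finite group is at most $s+1$, where $s$ is the maximum of the ranks of its Sylow subgroups \cite{gu2, lo-ma, Lu}, and that the rank is monotone under passing to subgroups and sections. Since $G$ has Lie rank $d$, it is a section of $GL_n(\ell^{e'})$ with both $n$ and $e'$ bounded in terms of $d$ and $e$: for classical groups $n$ is the dimension of the natural module, equal to $d+1$, $2d$, or $2d+1$ (with $e'$ at most $2e$ for the unitary families), for exceptional groups $d$ is absolutely bounded and hence so is $n$, and the twisting increases the field degree only by a bounded factor. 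Thus it suffices to bound, in terms of $n$ and $e'$, the rank of each Sylow subgroup of $GL_n(q')$ with $q'=\ell^{e'}$.

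For the defining prime $\ell$ this is immediate: a Sylow $\ell$-subgroup of $GL_n(q')$ is conjugate to the group of upper unitriangular matrices, of order $q'^{\,n(n-1)/2}=\ell^{e'n(n-1)/2}$, and any finite $\ell$-group of order $\ell^k$ has rank at most $k$, so its rank is at most $e'n(n-1)/2$, bounded in terms of $n$ and $e'$. For a prime $p\neq\ell$ I would invoke the classical description (due to Weir) of a Sylow $p$-subgroup $S$ of $GL_n(q')$: if $f$ is the multiplicative order of $q'$ modulo $p$, then $S$ embeds into the wreath product $C\wr\Sigma$, where $C$ is the cyclic Sylow $p$-subgroup of $\F_{q'^{f}}^\times$ and $\Sigma$ is a Sylow $p$-subgroup of the symmetric group $S_{\lfloor n/f\rfloor}$. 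The base group $C^{\lfloor n/f\rfloor}$ has rank $\lfloor n/f\rfloor\leq n$ regardless of the order of $C$, while $\Sigma$ has order at most $p^{\,n}$ and hence rank at most $n$; using that the rank of an extension is at most the sum of the ranks of a normal subgroup and the corresponding quotient, we get that the rank of $S$ is at most $2n$, independently of $q'$.

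Combining the two cases, the maximum rank of a Sylow subgroup of $GL_n(q')$ is bounded in terms of $n$ and $e'$, hence in terms of $d$ and $e$; by the $s+1$ theorem the rank of $GL_n(q')$ is $(d,e)$-bounded, and therefore so is the rank of its section $G$. The step requiring the most care is the cross-characteristic case: one must confirm that the number of generators of a Sylow $p$-subgroup does not grow with $q'$, which rests on the explicit wreath-product structure together with the observation that enlarging the cyclic factor $C$ leaves the rank of the base group unchanged. The only other point demanding attention is the uniform verification, across the classical and exceptional families (including the twisted types), that $G$ is a section of $GL_n(q')$ with $n$ and $e'$ genuinely controlled by $d$ and $e$.
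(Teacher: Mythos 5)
Your proposal is correct in substance, and its skeleton coincides with the paper's: both proofs first reduce the problem to bounding the ranks of the individual Sylow subgroups via the $s+1$ theorem of \cite{gu2, lo-ma, Lu}, and both dispose of the defining characteristic $\ell$ by a crude order bound (the paper bounds the Sylow $\ell$-subgroup by $\ell^{ed^2}$ directly in $G$; you bound the unitriangular group of an ambient $GL_n(\ell^{e'})$ --- the same idea). Where you genuinely diverge is the cross-characteristic case: the paper simply cites \cite[Sec.~4.10]{GLS3}, which gives that the rank of a Sylow $s$-subgroup for $s\ne\ell$ is at most the untwisted Lie rank plus the rank of the Weyl group, hence bounded in terms of $d$ alone; you instead realize $G$ as a section of $GL_n(\ell^{e'})$ with $(d,e)$-bounded $n$ and $e'$ and invoke Weir's wreath-product description $S\le C\wr\Sigma$ to get rank at most $2n$ independently of $q'$. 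Your route is more self-contained and elementary --- it avoids the structural Sylow theory of groups of Lie type entirely --- at the cost of the embedding bookkeeping across the twisted and exceptional families (which you rightly flag) and of one small inaccuracy: Weir's theorem is stated for odd $p$, and for $p=2$ with $q'\equiv 3 \bmod 4$ the containment you claim fails; for instance $GL_2(3)$ has a semidihedral Sylow $2$-subgroup of order $16$, whereas your $C\wr\Sigma$ there has order $8$. This is easily repaired: either use the Carter--Fong description of Sylow $2$-subgroups (the base group consists of semidihedral blocks of rank $2$, so the rank is still linear in $n$), or first embed $GL_n(q')$ into $GL_n(q'^2)$, where $q'^2\equiv 1\bmod 4$ and your wreath-product bound applies verbatim. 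With that patch your argument is complete, and, like the paper's, it makes visible that only the defining-characteristic Sylow rank actually depends on $e$.
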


In particular, this upper bound for the rank of $G$ is independent of $\ell$.

\begin{proof}
To bound the rank of $G$, it suffices to bound the
rank of each Sylow $s$-subgroup of~$G$ \cite{gu2, lo-ma, Lu}. If $s \ne \ell$, it is well known that the rank of a Sylow $s$-subgroup of $G$ is at most the (untwisted)
Lie rank of $G$ plus the rank of the Weyl group \cite[Sec.~4.10]{GLS3}, which is bounded in terms of $d$. The Sylow $\ell$-subgroup has order less than $\ell^{ed^2}$ and its rank is at most
$ed^2$.
\end{proof}

We proceed with the proof of Proposition~\ref{p-simple}. By Lemma~\ref{l-an}, we may assume $G$ to be a finite simple group of Lie type of Lie rank $d$ over the field of $q$ elements with $q = \ell^e$ for a prime $\ell$, and we may assume that $d$ is fixed and indeed assume the type of $G$ is fixed. By Lemma~\ref{l-rlt} it remains
to show that $e$ is bounded in terms of $r$.

For most cases, we only require condition~(1), although
we cannot do this in all cases as Examples~\ref{ex1} and \ref{ex2} show.

If $p = \ell$, then $P/\Phi(P)$ is elementary abelian of order at least $\ell^e$. Since $P = P \cap G'$ (as $G$ is simple) and the rank of $P=P\cap G'$ is $r$-bounded by Lemma~\ref{l-focal}, it follows that $e$ is $r$-bounded.

So we may assume that $p \ne \ell$. First suppose that
$p$ divides the order of some maximal
parabolic subgroup $M$. Let $M = LQ$, where $L$ is a Levi subgroup of $M$
and $Q$ is its unipotent radical. Recall that $C_M(Q) \leqslant Q$ (that is, all parabolic subgroups
are $\ell$-constrained). If $x$ is an element of order $p$ in $M$, we see that $x$
acts nontrivially on $Q$ and so also on $Q/\Phi(Q)$ as a linear transformation over
the field of $q=\ell^e$ elements. Hence $[x, Q/\Phi(Q)]$ has rank at least~$e$. Since $[x, Q/\Phi(Q)]$ is generated by the images of elements of $I_G(x)$ and $x$ is conjugate to an element of $P$, we obtain $ e \leqslant r$ by condition~(1).

Therefore we may assume that $p$ does not divide the order of any parabolic subgroup and so every nontrivial element of $P$ is semisimple regular. By \cite[Corollary]{Gow}, then $I_G(P)$ intersects every nontrivial conjugacy class of semisimple elements.

 By the above, we may assume that there exists a semisimple regular element $x$ of prime order in a Borel subgroup $B$ such that $x \in I_G(P)$.
We now apply to $I_G(x)$ the arguments as in the above case where an element of order $p\ne \ell$ is contained in a parabolic subgroup. Using condition~(2) we obtain
that $e \leqslant r$.
\end{proof}

\begin{proof}[Completion of the proof of Theorem~\ref{third}] First we extend Proposition~\ref{p-simple} from simple groups to semisimple.

\begin{lemma}\label{l-3semi}
If $G$ is a direct product of $m$ non-abelian simple groups of order divisible by~$p$, then $m\leqslant r$ and the rank of $G$ is $r$-bounded.
\end{lemma}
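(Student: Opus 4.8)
The plan is to reduce both assertions to the simple case settled in Proposition~\ref{p-simple}, after checking that the hypotheses descend to the individual factors, and to obtain the sharp inequality $m\le r$ by exhibiting a concrete subgroup generated by a subset of $I_G(P)$ whose minimal number of generators is at least $m$. Write $G=S_1\times\dots\times S_m$ with the $S_i$ non-abelian simple of order divisible by $p$, fix a Sylow $p$-subgroup $P_i$ of each $S_i$, so that $P=P_1\times\dots\times P_m$ is a Sylow $p$-subgroup of $G$ and every $P_i\neq 1$.

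The first step is to record that the standing hypotheses~(1) and~(2) are inherited by each factor. Under the embedding of $S_i$ as the $i$-th coordinate subgroup of $G$, a commutator $[g,s]$ with $g\in S_i$ and $s\in P_i$ is exactly the commutator of the corresponding elements of $G$ and $P$, so $I_{S_i}(P_i)\subseteq I_G(P)$; similarly $I_{S_i}(x)\subseteq I_G(x)$ for any $x\in I_{S_i}(P_i)\subseteq I_G(P)$. Consequently conditions~(1) and~(2) hold for $S_i$ relative to $P_i$, and since $S_i$ is simple, Proposition~\ref{p-simple} gives that each $S_i$ has $r$-bounded rank.

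For the bound on $m$, the key point is to produce $m$ elements of $I_G(P)$ generating a subgroup that cannot be generated by fewer than $m$ elements; condition~(1) will then force $m\le r$. By the focal subgroup theorem, as used in Lemma~\ref{l-focal}, the subgroup $P_i=P_i\cap S_i'$ is generated by a subset of $I_{S_i}(P_i)$ whose elements lie in $P_i$; since $P_i\neq 1$, one of these, call it $z_i$, is a nontrivial $p$-element. Let $e_i\in G$ be the element with $z_i$ in the $i$-th coordinate and $1$ elsewhere; then $e_i\in I_G(P)$ by the inheritance above. The $e_i$ commute and $\langle e_1,\dots,e_m\rangle=\langle z_1\rangle\times\dots\times\langle z_m\rangle$ is a $p$-group whose Frattini quotient has order $p^m$, hence cannot be generated by fewer than $m$ elements. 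By condition~(1) it is $r$-generated, whence $m\le r$. Finally, since the rank of a direct product is at most the sum of the ranks of the factors, and each $S_i$ has $r$-bounded rank while $m\le r$, the rank of $G=S_1\times\dots\times S_m$ is $r$-bounded.

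The step I expect to require the most care is precisely the sharp bound $m\le r$. Simply applying Lemma~\ref{l-focal} to $G$ itself (using $G'=G$, so that $P\cap G'=P=P_1\times\dots\times P_m$ has rank at least $m$) would only yield that $m$ is $r$-bounded, not the stated $m\le r$. The argument must instead exploit the \emph{common} prime $p$ shared by the orders of all the $z_i$, which is what forces an elementary abelian section of rank exactly $m$; verifying that the focal-subgroup generators may be taken inside $P_i$, so that the $z_i$ are genuine $p$-elements, is the one point that must be stated carefully.
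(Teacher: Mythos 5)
Your proof is correct, and its skeleton coincides with the paper's: exhibit one nontrivial $p$-element of $I_G(P)$ inside each simple factor, note that these $m$ elements lie in distinct direct factors and hence commute and generate an abelian $p$-group of rank $m$, so that condition~(1) forces $m\le r$; the rank bound for $G$ then follows factorwise from Proposition~\ref{p-simple}, after the inheritance check $I_{S_i}(P_i)\subseteq I_G(P)$ and $I_{S_i}(x)\subseteq I_G(x)$, which you spell out and the paper leaves implicit. The one genuine difference is the device used to produce the nontrivial element: you invoke D.~Higman's focal subgroup theorem, observing that $P_i=P_i\cap S_i'$ is generated by elements of $I_{S_i}(P_i)$ lying in $P_i$, so that some generator $z_i\neq 1$ may be chosen; the paper instead uses Burnside's normal $p$-complement theorem --- since $S_i$ is non-abelian simple of order divisible by $p$, the Sylow subgroup $P_i$ cannot be central in $N_{S_i}(P_i)$, which yields a nontrivial commutator $[x_i,n_i]\in I_G(P)\cap P_i$ directly. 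Both are one-line appeals to classical facts and buy the same thing; your route has the mild advantage of reusing the ingredient already deployed in Lemma~\ref{l-focal}, while the paper's Burnside argument is marginally more self-contained at this point. Your closing remark is also accurate and matches the paper's implicit logic: applying Lemma~\ref{l-focal} to $G$ itself (where $P\cap G'=P$ has rank at least $m$) would only show that $m$ is $r$-bounded, whereas the $m$ commuting nontrivial $p$-elements of $I_G(P)$, sharing the common prime $p$, give the sharp inequality $m\le r$ exactly as in the paper's proof.
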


 \begin{proof}
 Let $P$ be the product of Sylow $p$-subgroups $P_i$ of the simple factors $G_i$. We know that $P_i$ is not in the centre of the normalizer $N_{G_i}(P_i)$ by Burnside's theorem on normal $p$-complements. Hence each factor $G_i$ contains a non-trivial commutator $[x_i,n_i]\in I_G(P)\cap P_i$ for $x_i\in P_i$. These commutators generate an abelian $p$-subgroup of rank $m$. Hence $m\leqslant r$ by condition (1). Then the rank of $G$ is $r$-bounded, since the rank of each factor $G_i$ is $r$-bounded by Proposition~\ref{p-simple}.
 \end{proof}

 When $G$ is a $p$-soluble group, the result already follows from Theorem~\ref{second}. Therefore we assume henceforth that $G$ is not $p$-soluble.

\begin{lemma}\label{l-3derived}
If the quotient $G/R$ of $G$ by the $p$-soluble radical $R$ is a direct product of non-abelian simple groups of order divisible by $p$, then the rank of $[G,P]$ is $r$-bounded.
\end{lemma}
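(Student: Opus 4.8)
The plan is to bound $\operatorname{rank}[G,P]$ by splitting it along the $p$-soluble radical $R$. Note first that $[G,P]\trianglelefteq\langle G,P\rangle=G$ by the standard commutator identities, so $[G,P]$ is normal in $G$, and its Sylow $p$-subgroup is $[G,P]\cap P\le G'\cap P$, whose rank is $r$-bounded by Lemma~\ref{l-focal}; thus the prime $p$ is already under control and only the primes $q\ne p$ remain at issue. The image of $[G,P]$ in $\bar G=G/R$ is $[\bar G,\bar P]$ with $\bar P=PR/R$, and since every simple factor $S_i$ of $\bar G$ has order divisible by $p$ its Sylow $p$-subgroup $\bar P_i$ is nontrivial and $[S_i,\bar P_i]=S_i$; hence $[\bar G,\bar P]=\bar G$, which has $r$-bounded rank by Lemma~\ref{l-3semi}. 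In particular $[G,P]$ covers $\bar G$, so the whole difficulty lies in the $p$-soluble part $[G,P]\cap R$.

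To bound $\operatorname{rank}[G,P]$ I would induct on $|G|$. If $R=1$ then $G=\bar G$ and Lemma~\ref{l-3semi} finishes. Otherwise choose a minimal normal subgroup $M$ of $G$ with $M\le R$. Conditions (1) and (2) pass to $G/M$ (a subset of $I_{G/M}(PM/M)$ lifts to a subset of $I_G(P)$, and likewise for $I_{G/M}(xM)$ with $x\in I_G(P)$), and $R/M$ is the $p$-soluble radical of $G/M$ because $(G/M)/(R/M)=\bar G$ has no nontrivial $p$-soluble normal subgroup; so $(G/M,R/M)$ again satisfies the hypotheses and by induction $[G,P]M/M=[G/M,PM/M]$ has $r$-bounded rank. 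Since $\operatorname{rank}[G,P]\le\operatorname{rank}([G,P]\cap M)+\operatorname{rank}([G,P]M/M)$ and $[G,P]\cap M$ is a normal subgroup of $G$ contained in the minimal normal subgroup $M$, it is trivial or equal to $M$; the whole problem therefore reduces to bounding $\operatorname{rank}M$ in the case $M\le[G,P]$.

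As $M\le R$ is $p$-soluble, $M$ is either elementary abelian or a product of isomorphic non-abelian simple groups of order prime to $p$. If $M$ is an elementary abelian $p$-group, then $M\le[G,P]\cap P\le G'\cap P$ and Lemma~\ref{l-focal} applies. If $M$ is non-abelian, then $P$ acts coprimely on it and $[M,P]\ne1$ (else $[G,P]$ would centralize $M$ by Lemma~\ref{l-ker}(a), making the non-abelian $M\le[G,P]$ central), so $\operatorname{rank}[M,P]$ is $r$-bounded by Theorem~\ref{main}; this bounds the rank of a single factor, and the number of factors is $r$-bounded by the techniques of Lemmas~\ref{l-3} and~\ref{l-444} (involutions in the Sylow $2$-subgroup of $[M,P]$ and the orbit-counting of Lemma~\ref{l-02}), applied both to the transitive action of $\bar G$ on the factors and, for the permutations induced by $R$, to the $p$-soluble group $RP$ through Theorem~\ref{second}. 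The remaining, and hardest, case is $M$ elementary abelian of exponent $q\ne p$: then $M$ is an irreducible $\mathbb{F}_qG$-module, $P$ acts coprimely, and condition (1) bounds only $\dim[M,P]=\operatorname{rank}[M,P]$ by $r$, while one must bound $\dim M$. This is the main obstacle, precisely because $[G,P]$ can cover a chief factor on which $P$ has a large fixed space, so controlling $[M,P]$ alone does not suffice. I expect to overcome it by the representation-theoretic mechanism that bounds the field degree in Proposition~\ref{p-simple}: the Hall--Higman type minimal-polynomial estimate of Lemma~\ref{l-khu-moe} and the orbit-span estimate used in Proposition~\ref{p-exp}, together with the Hartley--Isaacs multiplicity bound (Theorem~\ref{t-HI}) and condition (2) applied to a suitable $x\in I_G(P)$ acting on $M$, forcing a nontrivial $p$-element to move a definite proportion of $M$ and hence $\dim M$ to be $r$-bounded.
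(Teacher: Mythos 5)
Your proposal contains a genuine gap, and it sits exactly where you acknowledge it: the elementary abelian chief factor $M$ of exponent $q\ne p$ with $M\le[G,P]$. The tools you name cannot close it. The critical subcase is $[M,P]=1$: by Lemma~\ref{l-ker}(a) such an $M$ lies in $Z([G,P])$, yet it can perfectly well be contained in $[G,P]$ (as a piece of a central extension of $\bar G=G/R$), and then \emph{no} $p$-element moves any point of $M$, so no Hall--Higman minimal-polynomial or orbit-span estimate in the spirit of Lemma~\ref{l-khu-moe} or Proposition~\ref{p-exp} can even get started. The Hartley--Isaacs bound is also unavailable in the form you want: the constant $\e(A)$ in Theorem~\ref{t-HI} depends on $|A|$, and here $A=P$ has unbounded order (the paper invokes Theorem~\ref{t-HI} only inside Lemma~\ref{l-02}, where $|A|$ has already been bounded). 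So ``forcing a nontrivial $p$-element to move a definite proportion of $M$'' is false in the very case that matters. The paper handles this case by a completely different device: $G/R$ has $r$-bounded rank and, by Lemmas~\ref{l-an} and \ref{l-3lierank} together with the Hall--Liebeck--Seitz and Guralnick--Saxl generation theorems \cite{hls,GSaxl}, is covered by a subgroup $H=\langle a_1,\dots,a_m\rangle$ generated by $r$-boundedly many $p$-elements; Theorem~\ref{second} applied to the $p$-soluble group $RP$ bounds the ranks of $[R,P]$ and of each $[R,a_i]$; after factoring out the normal subgroup $\prod_i[R,a_i]$ one has $[H,R]=1$, and then $[G,P]=[R,P]\cdot[H,P]$ with $[H,P]\le[H,H]$, where $[H,H]\cap R$ is central in $H$ and hence embeds in the Schur multiplier of $G/R$, whose rank is $r$-bounded by Lubotzky--Mann \cite[Theorem~4.2.3]{LM}. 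That Schur multiplier step is precisely what tames the central chief factors your sketch cannot reach.

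There is a second, structural flaw: your induction on $|G|$ does not close. In the step where $M\le[G,P]$, you obtain $\operatorname{rank}[G,P]\le\operatorname{rank}M+\operatorname{rank}[G/M,PM/M]$, and the induction hypothesis gives $\operatorname{rank}[G/M,PM/M]\le f(r)$; but then your conclusion is $f(r)+\operatorname{rank}M>f(r)$, so the inductive claim $\operatorname{rank}[G,P]\le f(r)$ is not reproduced. Put differently, even a uniform $r$-bound on the rank of every chief factor would not suffice, because $[G,P]\cap R$ can run through a chief series of unbounded length and the per-factor contributions accumulate. This is why the paper avoids chief-factor induction entirely and instead bounds the whole contribution of $R$ in one stroke ($[R,P]$ and $\prod_i[R,a_i]$ via Theorem~\ref{second}, the residue via the multiplier). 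Your opening observations --- that $[G,P]$ covers $\bar G$, that the $p$-part is controlled by the focal argument of Lemma~\ref{l-focal}, and that Theorem~\ref{main} bounds $[M,P]$ for nonabelian coprime $M$ --- are all correct, but the two missing ingredients (the bounded $p$-element generation of $G/R$ and the Schur multiplier bound) are the heart of the proof, and neither appears in your proposal.
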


\begin{proof}
The quotient $G/R$ is a direct product of at most $r$ finite simple groups by Lemma~\ref{l-3semi}. By Lemma~\ref{l-an} the alternating factors have $r$-bounded order, as obviously do sporadic factors. By Lemma~\ref{l-3lierank} the factors of Lie type have $r$-bounded Lie ranks. By the result of J.~Hall, M.~Liebeck, and G.~Seitz \cite{hls} (later improved by R.~Guralnick and J.~Saxl \cite{GSaxl}), each of these factors can be generated by $r$-boundedly many $p$-elements. Altogether, the quotient $G/R$ is generated by $r$-boundedly many $p$-elements.

We choose $p$-elements $a_1,\dots ,a_m$ of $G$ that generate $G$ modulo $R$, where $m$ is $r$-bounded, and set $H=\langle a_1,\dots ,a_m\rangle$. Each $a_i$ has a conjugate $a_i^{g_i}\in P$, for which $[R,a_i^{g_i}]$ has $r$-bounded rank by Theorem~\ref{second} applied to $RP$. Then $[R,a_i]=[R,a_i^{g_i}]^{g_i^{-1}}$ also has $r$-bounded rank. As a result, the product $\prod_{i}[R,a_i]$ has $r$-bounded rank, since the number of factors is $r$-bounded.

Note that the product $\prod_{i}[R,a_i]$ is normal in $G=RH$. Indeed, it is normal in $R$, since each $[R,a_i]$ is normal in $R$, and this product is $H$-invariant because it is the smallest normal subgroup of $R$ such that $H$ acts trivially on the quotient.

Since the rank of $\prod_{i}[R,a_i]$ is $r$-bounded, we can pass to the quotient $G/\prod_{i}[R,a_i]$ and assume that $[H,R]=1$.

Let $H_p$ be a Sylow $p$-subgroup of $H$; then $H_p^g\leqslant P$ for some $g\in G$. If $g=xy$, where $x\in R$ and $y\in H$, we see that $H_p^g=H_p^{xy}=H_p^y\leqslant H$, since $x$ centralizes $H$ by our assumption. Thus, $H\cap P$ is a Sylow $p$-subgroup of $H$. Note that $P=(P\cap R)\cdot (P\cap H)$.

We need to bound the rank of $[G,P]=[RH,P]=[R,P]\cdot [H,P]$. The rank of $[R,P]$ is $r$-bounded by Theorem~\ref{second} applied to $RP$. Since $[R,P]\leqslant R$ is centralized by $H$ by assumption, it remains to bound the rank of $[H,P]=[H,\,(P\cap R)\cdot (P\cap H)]=[H,\,P\cap H]$.

In fact, the rank of $[H,H]$ is $r$-bounded.
Since $[H,R]=1$, the intersection $[H,H]\cap R$ is central in $H$ and therefore is isomorphic to a subgroup of the Schur multiplier of $H/(H\cap R)$. Since the rank of $H/(H\cap R)\cong G/R$ is $r$-bounded by Lemma~\ref{l-3semi}, the rank of this Schur multiplier is also $r$-bounded by the Lubotzky--Mann theorem \cite[Theorem~4.2.3]{LM}. As a result, the rank of $[H,H]$ is $r$-bounded, and so is the rank of $[H,P]$, as required.
\end{proof}

We now show that in the general case the group $G$ has a subgroup of $r$-bounded index that satisfies the hypotheses of Lemma~\ref{l-3derived}.

\begin{lemma}\label{l-3lambda1}
The group $G$ has a normal subgroup $M$ of $r$-bounded index containing the $p$-soluble radical $R$ of $G$ such that $M/R$ is a direct product of non-abelian simple groups of order divisible by~$p$.
\end{lemma}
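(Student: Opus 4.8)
The plan is to take $M$ to be the full preimage in $G$ of the generalized Fitting subgroup of $\bar G:=G/R$, and to control $[G:M]$ via the structure of $\operatorname{Out}$ of the socle. First I would pass to $\bar G$ and record that conditions~(1) and~(2) are inherited by the quotient: writing $\bar P$ for the image of $P$, the set $I_{\bar G}(\bar P)$ is the image of $I_G(P)$, and every subgroup of $\bar G$ generated by a subset of $I_{\bar G}(\bar P)$ is the image of a subgroup of $G$ generated by a subset of $I_G(P)$, hence is $r$-generated; the same remark applies to $I_{\bar G}(x)$. Since $R$ is the $p$-soluble radical, $\bar G$ has trivial $p$-soluble radical. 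In particular its Fitting subgroup, being nilpotent and hence $p$-soluble, is trivial, so $F^*(\bar G)=E(\bar G)$ has trivial centre; thus $E(\bar G)=S_1\times\dots\times S_n$ is a direct product of non-abelian simple groups on which $\bar G$ acts faithfully by conjugation.

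Next I would show that every factor has order divisible by $p$. The product of those $S_i$ with $p\nmid|S_i|$ is $\bar G$-invariant, because conjugation permutes the factors preserving isomorphism type and hence $p$-divisibility; this product has order coprime to $p$, so it is $p$-soluble and therefore contained in the $p$-soluble radical of $\bar G$, which is trivial. Hence $N:=E(\bar G)=S_1\times\dots\times S_n$ with $p\mid|S_i|$ for all $i$, and I would let $M$ be the preimage of $N$ in $G$. Then $M\trianglelefteq G$, $R\le M$, and $M/R=N$ is a direct product of non-abelian simple groups of order divisible by $p$, as required; it remains only to bound $[G:M]=[\bar G:N]$ in terms of $r$.

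To bound the index I would first choose $\bar P$ so that $\bar P\cap N=P_1\times\dots\times P_n$ with each $P_i\in\operatorname{Syl}_p(S_i)$. Arguing as in Lemma~\ref{l-3semi}, Burnside's normal $p$-complement theorem yields in each $S_i$ a nontrivial $p$-element lying in $I_{\bar G}(\bar P)\cap P_i$; these lie in distinct direct factors and so generate an elementary abelian $p$-group of rank $n$, whence $n\le r$ by condition~(1). For each $i$ the inclusions $I_{S_i}(P_i)\subseteq I_{\bar G}(\bar P)$ and $I_{S_i}(x)\subseteq I_{\bar G}(x)$ show that conditions~(1) and~(2) hold for $S_i$ with Sylow $p$-subgroup $P_i$, so Proposition~\ref{p-simple} (together with Lemma~\ref{l-an} for alternating factors) gives that each $S_i$ has $r$-bounded rank. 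Since the field degree and the Lie rank of a simple group of Lie type are each at most its Pr\"ufer rank, the classification then bounds $|\operatorname{Out}(S_i)|$ in terms of the rank of $S_i$, so $|\operatorname{Out}(S_i)|$ is $r$-bounded for every $i$. As $\bar G$ embeds in $\operatorname{Aut}(N)$ and $\bar G/N$ embeds in $\operatorname{Out}(N)=\bigl(\prod_i\operatorname{Out}(S_i)\bigr)\rtimes W$ with $W\le S_n$, the index $[\bar G:N]$ is at most $n!\cdot\prod_i|\operatorname{Out}(S_i)|$, which is $r$-bounded because $n\le r$.

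The main obstacle is the last step: one must verify carefully that the two hypotheses genuinely descend to the simple sections $S_i$ with the aligned Sylow subgroups $P_i$, so that Proposition~\ref{p-simple} is applicable, and then convert the resulting bound on the rank of each $S_i$ into a bound on $|\operatorname{Out}(S_i)|$. The latter conversion rests on the classification facts that the Lie rank and the defining-field degree of a finite simple group are controlled by its Pr\"ufer rank (cf. Lemmas~\ref{l-rlt} and~\ref{l-3lierank}), so that an $r$-bounded rank forces an $r$-bounded outer automorphism group.
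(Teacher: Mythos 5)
Your proof is correct and follows essentially the same route as the paper: take $M$ to be the preimage of $F^*(G/R)$, bound the number of simple factors by $r$ via the Burnside commutator argument of Lemma~\ref{l-3semi}, and bound $[G:M]$ by embedding $G/M$ into the outer automorphism data of the socle, with $|\operatorname{Out}(S_i)|$ controlled through Proposition~\ref{p-simple} and the bounds on Lie rank and field degree. You additionally spell out details the paper leaves implicit (that the hypotheses descend to $\bar G$ and to the factors $S_i$, and that each $S_i$ has order divisible by $p$), which is a sound elaboration rather than a different method.
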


\begin{proof}
The socle $F^*(G/R)$ of $G/R$ is a direct product of non-abelian simple groups of order divisible by~$p$. The number of these factors is at most $r$ by Lemma~\ref{l-3semi}. We take the inverse image of $F^*(G/R)$ in $G$ for the required subgroup $M$. We only need to show that the index of $M$ is $r$-bounded. Indeed, the group $G/R$ embeds into the automorphism group of $F^*(G/R)$. Let $ F^*(G/R)=T_1\times \dots \times T_n$, where the $T_i$ are non-abelian simple groups. Then $ \operatorname{Aut} F^*(G/R) = (\operatorname{Aut} T_1\times \dots \times \operatorname{Aut}T_n)U$,
 where $U$ is a subgroup of the symmetric group $S_n$. Since $n\leqslant r$, it remains to prove that the orders of the outer automorphism groups $ \operatorname{Aut}T_i/T_i$ are $r$-bounded. This is obviously true for sporadic factors. The alternating factors have $r$-bounded orders by Lemma~\ref{l-an}.
 The factors of Lie type have $r$-bounded Lie ranks and are defined over fields of orders $\ell ^e$ for primes $\ell$ with $r$-bounded exponents $e$ by Proposition~\ref{p-simple} and Lemma~\ref{l-3lierank}. Therefore their outer automorphism groups also have $r$-bounded orders.
 \end{proof}

We now finish the proof of Theorem~\ref{third}. Let $ M$ be the normal subgroup of $G$ of $r$-bounded index given by Lemma~\ref{l-3lambda1}, which is actually the inverse image of the socle $F^*(G/R)$ of the quotient $G/R$ by the $p$-soluble radical $R$. By Lemma~\ref{l-3derived} the rank of $[M,M\cap P]$ is $r$-bounded. Note that $M=R\cdot [M,M\cap P]$, so that $M/[M,M\cap P]$ is $p$-soluble. Since $M$ has $r$-bounded index in $G$, the normal closure $N$ of $[M,M\cap P]$ in $G$ is a product of $r$-boundedly many conjugates of $[M,M\cap P]$ and therefore also has $r$-bounded rank. Note that $M/N$ is $p$-soluble. Passing to the quotient $G/N$ we can assume that $N=1$.

Thus, we can assume that $G$ has a $p$-soluble normal subgroup $M$ of $r$-bounded index. By Theorem~\ref{second} applied to the product $MP$ the rank of $[M,P]$ is $r$-bounded. Since $M$ has $r$-bounded index in $G$, the normal closure $T$ of $[M,P]$ in $G$ is a product of $r$-boundedly many conjugates of $[M,P]$ and therefore also has $r$-bounded rank. Passing to the quotient $G/T$ we can assume that $T=1$, so that $[M,P]=1$.

Hence the index of the centralizer $C_G(P)$ is $r$-bounded. This centralizer contains a normal subgroup $C$ of $r$-bounded index. Then $C$ centralizes $[G,P]$ by Lemma~\ref{l-ker}(a). Since $C\cap [G,P]\leqslant Z([G,P])$, we obtain that the centre of $[G,P]$ has $r$-bounded index. By Schur's theorem
(see \cite[Theorem 4.12]{rob}) the derived subgroup $[G,P]'$ of $[G,P]$ has $r$-bounded order. Passing to the quotient $G/[G,P]'$ we can assume without loss of generality
that $[G,P]$ is abelian. Since $[G, P] =\langle I_G(P)\rangle$ is $r$-generated, the rank of $[G, P]$ is at most $r$.
\end{proof}

\end{document}